\def\E{\ifmmode{\mathbb E}\else{$\mathbb E$}\fi} 
\def\N{\ifmmode{\mathbb N}\else{$\mathbb N$}\fi} 
\def\R{\ifmmode{\mathbb R}\else{$\mathbb R$}\fi} 
\def\Q{\ifmmode{\mathbb Q}\else{$\mathbb Q$}\fi} 
\def\C{\ifmmode{\mathbb C}\else{$\mathbb C$}\fi} 
\def\H{\ifmmode{\mathbb H}\else{$\mathbb H$}\fi} 
\def\Z{\ifmmode{\mathbb Z}\else{$\mathbb Z$}\fi} 
\def\P{\ifmmode{\mathbb P}\else{$\mathbb P$}\fi} 
\def\T{\ifmmode{\mathbb T}\else{$\mathbb T$}\fi} 
\def\SS{\ifmmode{\mathbb S}\else{$\mathbb S$}\fi} 
\def\DD{\ifmmode{\mathbb D}\else{$\mathbb D$}\fi} 
\newcommand{\e}{\varepsilon}
\newcommand{\del}{\partial}
\newcommand{\ben}{\begin{enumerate}}
\newcommand{\een}{\end{enumerate}}
\newcommand{\be}{\begin{equation}}
\newcommand{\ee}{\end{equation}}
\newcommand{\bea}{\begin{eqnarray}}
\newcommand{\eea}{\end{eqnarray}}
\newcommand{\bc}{\begin{center}}
\newcommand{\ec}{\end{center}}
\newcommand{\beastar}{\begin{eqnarray*}}
\newcommand{\eeastar}{\end{eqnarray*}}
\theoremstyle{theorem}
\newtheorem{thm}{Theorem}[section]
\newtheorem{cor}[thm]{Corollary}
\newtheorem{lem}[thm]{Lemma}
\newtheorem{prop}[thm]{Proposition}
\theoremstyle{definition}
\newtheorem{defn}{Definition}[section]
\newtheorem{rem}[defn]{Remark}
\newtheorem{exm}[defn]{Example}
\numberwithin{equation}{section}
\def\R{{\mathbb R}}
\def\E{{\mathbb E}}
\def\Z{{\mathbb Z}}
\def\C{{\mathbb C}}
\def\R{{\mathbb R}}
\def\N{{\mathbb N}}
\def\SS{{\mathcal S}}
\def\LL{{\mathcal L}}
\def\DD{{\mathcal D}}
\def\AA{{\mathcal A}}
\def\FF{{\mathcal F}}
\def\11{{\mathbb I}}
\def\C{\mathbb{C}}
\def\Z{\mathbb{Z}}
\def\T{\mathbb{T}}
\def\L{\mathbb{L}}
\def\Q{\mathbb{Q}}
\def\E{\ifmmode{\mathbb E}\else{$\mathbb E$}\fi} 
\def\N{\ifmmode{\mathbb N}\else{$\mathbb N$}\fi} 
\def\R{\ifmmode{\mathbb R}\else{$\mathbb R$}\fi} 
\def\Q{\ifmmode{\mathbb Q}\else{$\mathbb Q$}\fi} 
\def\C{\ifmmode{\mathbb C}\else{$\mathbb C$}\fi} 
\def\H{\ifmmode{\mathbb H}\else{$\mathbb H$}\fi} 
\def\Z{\ifmmode{\mathbb Z}\else{$\mathbb Z$}\fi} 
\def\P{\ifmmode{\mathbb P}\else{$\mathbb P$}\fi} 
\def\SS{\ifmmode{\mathbb S}\else{$\mathbb S$}\fi} 
\def\DD{\ifmmode{\mathbb D}\else{$\mathbb D$}\fi} 
\def\R{{\mathbb R}}
\def\E{{\mathbb E}}
\def\Z{{\mathbb Z}}
\def\C{{\mathbb C}}
\def\R{{\mathbb R}}
\def\N{{\mathbb N}}
\def\LL{{\mathcal L}}
\def\FF{{\mathcal F}}
\def\e{\varepsilon}
  \def\P{\Psi}
\def\x{\xi}
\def\CF{{\mathcal F}}
\def\CI{{\mathcal I}}
\def\CL{{\mathcal L}}
\def\CS{{\mathcal S}}
\def\darr#1{\raise1.5ex\hbox{$\leftrightarrow$}
\mkern-16.5mu #1}
\def\roughly#1{\raise.3ex\hbox{$#1$\kern-.75em
\lower1ex\hbox{$\sim$}}}
\def\opname#1{\mathop{\kern0pt{\rm #1}}\nolimits}
\def\dim{\opname{dim}}
\def\rank{\opname{rank}}
\begin{document}
\quad \vskip1.375truein

\def\mq{\mathfrak{q}}
\def\mp{\mathfrak{p}}
\def\mH{\mathfrak{H}}
\def\mh{\mathfrak{h}}
\def\ma{\mathfrak{a}}
\def\ms{\mathfrak{s}}
\def\mm{\mathfrak{m}}
\def\mn{\mathfrak{n}}

\def\Hoch{{\tt Hoch}}
\def\mt{\mathfrak{t}}
\def\ml{\mathfrak{l}}
\def\mT{\mathfrak{T}}
\def\mL{\mathfrak{L}}
\def\mg{\mathfrak{g}}
\def\md{\mathfrak{d}}

\title[Deformations of Coisotropic Submanifolds]
{Deformations of Coisotropic Submanifolds \\
in locally conformal symplectic manifolds}

\author{H\^ong V\^an L\^e
\and Yong-Geun Oh}
\thanks{The  first named author is  partially supported by RVO: 67985840,
the second named author is partially supported
by the NSF grant \#DMS 0904197}

\address{
Department of Mathematics, University of Wisconsin, Madison, WI
53706 \& Department of Mathematics, POSTECH, Pohang, Korea\\
\and\\
Institute of Mathematics of ASCR, Zitna 25, 11567 Praha 1, Czech Republic}
\begin{abstract}
In this paper, we study deformations of coisotropic submanifolds
in a locally conformal symplectic manifold. Firstly, we derive the equation
that governs $C^\infty$ deformations of coisotropic submanifolds and define the cor-
responding $C^\infty$-moduli space of coisotropic submanifolds modulo the Hamil-
tonian isotopies. Secondly, we prove that the formal deformation problem is
governed by an $L_\infty$-structure which is a $\frak b$-deformation of strong homotopy Lie
algebroids introduced in \cite{oh-park} in the symplectic context. Then we study de-
formations of locally conformal symplectic structures and their moduli space,
and the corresponding bulk deformations of coisotropic submanifolds. Finally
we revisit Zambon's obstructed infinitesimal deformation \cite{zambon} in this enlarged
context and prove that it is still obstructed.
\end{abstract}

\keywords{locally  conformal symplectic manifold,  coisotropic submanifold, $\frak b$-twisted
differential, bulk deformation, Zambon's example}

\subjclass[2010]{Primary 53D35}

\maketitle

\tableofcontents

\section{Introduction}
\label{sec:intro}

Symplectic manifolds $(M,\omega)$ have been of much interest in \emph{global} study of
Hamiltonian dynamics, and symplectic topology via analysis of pseudoholomorphic
curves. In this regard closedness of the two-form $\omega$ plays an important
role in relation to the dynamics of Hamiltonian diffeomorphisms and the global
analysis of pseudoholomorphic curves. On the other hand when one takes the
coordinate chart definition of symplectic manifolds and implements the
covariance property of Hamilton's equation, there is no compulsory reason why one should require the
two-form to be closed. Indeed in the point of view of canonical formalism in
Hamiltonian mechanics and construction of the corresponding \emph{bulk physical space}, it is
more natural to require the locally defined canonical symplectic forms
$$
\omega_\alpha = \sum_{i=1}^n dq_i^{\alpha} \wedge dp_i^{\alpha}
$$
to satisfy the cocycle condition
\be\label{eq:cocylce}
\omega_\alpha = \lambda_{\beta\alpha} \omega_\beta, \quad \lambda_{\beta\alpha}\equiv \text{const.}
\ee
with $\lambda_{\gamma\beta}\lambda_{\beta\alpha} = \lambda_{\gamma\alpha}$
as the gluing condition. (See introduction \cite{vaisman:lcs} for a nice
explanation on this point of view) The corresponding bulk constructed in this way
naturally becomes \emph{locally conformal symplectic manifolds}
(abbreviated as l.c.s manifolds) whose definition we first recall.
For the consistency of the definition, we
will mostly assume $\dim M > 2$ in this paper.

\begin{defn} An l.c.s. manifold is a triple $(X, \omega, \frak b)$ where $\frak b$ is
a closed one-form and $\omega$ is a nondegenerate 2-form satisfying the relation
\be\label{eq:relation}
d\omega + \frak b \wedge \omega = 0.
\ee
\end{defn}

We refer to \cite{vaisman:lcs}, \cite{haller-ryb}, \cite{banyaga:lcs}, \cite{Banyaga2007}
for more detailed discussion of general properties of l.c.s. manifolds and non-trivial examples.

Locally by choosing $\frak b = dg$ for a local function $g: U \to \R$
on an open neighborhood $U$, \eqref{eq:relation} is equivalent to
\be\label{eq:local}
d(e^{-g}\omega) = 0
\ee
and so the local geometry of l.c.s manifold is exactly the same as that of
symplectic manifolds. In particular one can define the notion of Lagrangian
submanifolds, isotropic submanifolds, and coisotropic submanifolds in the same
way as in the symplectic case since the definitions require only nondegeneracy of
the two-form $\omega$.

The main question of our interest in this paper is whether the global
geometry of coisotropic submanifolds is any different from that of symplectic case.

We recall that by the results from \cite{zambon}, \cite{oh-park}, deformation theory of
coisotropic submanifolds in symplectic manifolds is generally obstructed. In particular, the set of
coisotropic submanifolds with a given rank does not form a smooth
Frechet manifold \cite{zambon}, and the relevant (formal) deformation theory thereof
is described by an $L_\infty$-structure called \emph{strong homotopy Lie algebroids}
\cite{oh-park}. In the present paper, we show that Oh-Park's deformation theory
naturally extends to that of l.c.s. manifolds, once appropriate normal form theorem
of canonical neighborhoods of coisotropic submanifolds (Theorem \ref{thm:normalform})
and the theory of bulk-deformed strong homotopy Lie algebroids (sections \ref{sec:deform-shla},
\ref{sec:bulk}) are developed.
For this purpose, we need to prove the l.c.s analog of Darboux-type theorem \cite{alan:darboux} and
develop the l.c.s. analog to Moser's trick, for which usage of Novikov-type cohomology
instead of the ordinary de-Rham cohomology is essential. (See \cite{haller-ryb} for relevant exposition
of this cohomology theory.) We derive   two equivalent equations that govern $C^\infty$-deformations  of coisotropic submanifolds (Theorems \ref{thm:coisotropic}, \ref{thm:2})  and  develop   a theory of bulk deformations  of
l.c.s. forms and of coisotropic  submanifolds in this larger context of l.c.s. manifolds.

Some more motivations of the present study are in order. First of all, we would like to
see if the obstructed example of Zambon \cite{zambon} in the symplectic context is
still obstructed in this enlarged deformations of coisotropic submanifolds together with
bulk deformations of l.c.s. structures with replacement of closedness of $\omega$ by
the Novikov-closedness of $\frak b$-twisted differential.
 We then prove that Zambon's
example still remains obstructed even under this enlarged setting of
bulk deformations (Theorem \ref{thm:zambonlcs}).

Another source of motivation comes from the study of $J$-holomorphic curves in this
enlarged bulk of l.c.s. manifolds. Again all the local theory of $J$-holomorphic
curves go through without change. The only difference lies in the global
geometry of $J$-holomorphic curves and it is not completely clear at this moment
whether Novikov-closedness of l.c.s. structure $(X,\omega,\frak b)$ would give
reasonably meaningful implication to the study of moduli problem of $J$-holomorphic
curves in the context of closed strings or open strings attached to
suitably physical $D$-branes. We refer to \cite{kapustin-orlov} for some
physical motivation of coisotropic D-branes and to \cite{cattaneo} for a generalization of study of
deformations of coisotropic submanifolds in the Poisson context.

We would like to thank Yoshihiro Ohnita for inviting us to the Pacific
Rim Geometry Conference in 2011 where the first named author gave a talk on l.c.s. manifolds,
which triggered our collaboration.

\section{Locally conformal pre-symplectic manifolds}
\label{sec:l.c.p-s.}

Suppose $Y \subset (X,\omega_X,\frak b)$ is a coisotropic submanifold.
Then the restriction $(Y,\omega,b)$ satisfies the same equation
\be\label{eq:dbomega=0}
d^b\omega: = d\omega + b \wedge \omega = 0
\ee
except that $\omega$ is no longer nondegenerate but has
constant rank.

This gives rise to the notion of \emph{locally conformal pre-symplectic manifolds},
abbreviated as l.c.p-s. manifold.

\begin{defn} A triple $(Y,\omega,b)$ is called an l.c.p-s. manifold if
$b$ is a closed one-form and $\omega$ is a two-form with constant rank that
satisfy
\be\label{eq:dbomega=0}
d^b\omega: = d\omega + b \wedge \omega = 0.
\ee
\end{defn}

\begin{rem}\label{rem:onb} If the rank of $\omega$ is at least 4,
then the  wedge product with $\omega$   defines a linear injective map  from  $\Omega^1(Y)$  to $\Omega^3 (Y)$.
Hence $b$ is defined  uniquely by the equation (\ref{eq:dbomega=0}).
If  the rank of $\omega$ is 2, then the  restriction of $b$ to the  null space $TY ^\omega$  of $\omega$ is  defined by (\ref{eq:dbomega=0}). The kernel of the wedge product $ \Omega^1(Y) \to \Omega ^3(Y),
\, \gamma \mapsto \omega\wedge \gamma,$ is the two-dimensional  annihilator
$T\FF ^\circ$  of $TY^\omega$.  In particular,  if  rank $\omega$ is 2
and $(Y, \omega, b)$ is an l.c.p-s. manifold, then $(Y, \omega, b + b')$ is
also an l.c.p-s. manifold for any $b'\in T\FF ^\circ$ such that $db'= 0$.
\end{rem}

From now on, we consider a general l.c.p-s. manifold $(Y,\omega,b)$.

We next introduce morphisms between l.c.p-s. manifolds
and automorphisms of  $(Y,\omega,b)$ generalizing those of l.c.s.
manifolds (see \cite{haller-ryb} for the corresponding definitions for the
l.c.s. case.)

\begin{defn}\label{morphism} Let $(Y,\omega,b)$ and
$(Y^\prime, \omega^\prime, b ^\prime)$ be two l.c.p-s. manifolds. A
diffeomorphism $\phi: Y\to Y'$ is called {\it l.c.p-s.} if there exists
$a \in C^\infty(Y,\R\setminus \{0\})$ such that
$$
\phi^*\omega^\prime = (1/a) \omega, \quad \phi^*b' = b + d(\ln|a|).
$$
\end{defn}

By setting $a =  e^{tu}$, it is easy to check that the following is the infinitesimal
version of Definition \ref{morphism}.

\begin{defn}\label{vectorfield}
Let $(Y,\omega,b)$ be a l.c.p-s. manifold. A vector field $\xi$
on $Y$ is called  {\it l.c.p-s.} if there exists a function $u \in C^\infty(Y)$ such
that
$$
\LL_\xi \omega = - u \omega, \quad \LL_\xi b = du
$$
\end{defn}

We denote by $Diff(Y,\omega,b)$ the set of l.c.p-s. diffeomorphisms.

\begin{defn} We call any such function $u \in C^\infty(Y)$ that appears in Definition
\ref{vectorfield} is called an l.c.p-s. function. We denote by $C^\infty(Y;\omega,b)$
the set of l.c.p-s. functions.
\end{defn}

It is easy to see that $C^\infty(Y;\omega,b)$ is a vector subspace of $C^\infty(X)$.
%
%

We say an l.c.p-s. diffeomorphism (resp. vector field) an
l.c.s. diffeomorphism (resp. vector field), if $(Y,\omega,b)$ is an l.c.s. manifold.

\section{Canonical neighborhoods of coisotropic submanifolds}
\label{sec:neighborhoods}

In this section, we develop the l.c.s. analog to Gotay's
coisotropic neighborhood theorem [Go] in the symplectic case.

As in the symplectic case, we denote
$$
E= (TY)^\omega
$$
the {\it characteristic distribution} on $Y$. The following lemma
is one of the important ingredients that enables us to develop deformation theory of coisotropic
submanifolds in l.c.s. manifolds in a way similar
to the symplectic case as done in \cite{oh-park}.

\begin{lem} The distribution $E$ on $Y$ is integrable.
\end{lem}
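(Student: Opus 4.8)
The plan is to reduce the statement to involutivity and then invoke the Frobenius theorem. Because $\omega$ has constant rank, its null space $E=(TY)^\omega$ is a smooth subbundle of $TY$, so integrability is equivalent to $E$ being closed under the Lie bracket: I must show that $[\xi,\eta]\in\Gamma(E)$ whenever $\xi,\eta\in\Gamma(E)$, that is, whenever $\iota_\xi\omega=\iota_\eta\omega=0$.

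First I would invoke the standard Cartan-calculus commutation identity $\iota_{[\xi,\eta]}=\LL_\xi\,\iota_\eta-\iota_\eta\,\LL_\xi$ and apply it to $\omega$, obtaining
\[
\iota_{[\xi,\eta]}\omega=\LL_\xi(\iota_\eta\omega)-\iota_\eta(\LL_\xi\omega)=-\iota_\eta(\LL_\xi\omega),
\]
where the first term drops out because $\iota_\eta\omega=0$. The task therefore becomes computing $\LL_\xi\omega$ for $\xi\in\Gamma(E)$.

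The crucial step, and the place where the l.c.p-s. hypothesis enters, is the evaluation of $\LL_\xi\omega$. By Cartan's formula and $\iota_\xi\omega=0$ we have $\LL_\xi\omega=d(\iota_\xi\omega)+\iota_\xi(d\omega)=\iota_\xi(d\omega)$, and the defining relation \eqref{eq:dbomega=0} gives $d\omega=-b\wedge\omega$. Since $b$ is a one-form and $\iota_\xi\omega=0$, the derivation property of $\iota_\xi$ yields $\iota_\xi(b\wedge\omega)=b(\xi)\,\omega-b\wedge\iota_\xi\omega=b(\xi)\,\omega$, hence $\LL_\xi\omega=-b(\xi)\,\omega$. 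Substituting this back gives $\iota_{[\xi,\eta]}\omega=b(\xi)\,\iota_\eta\omega=0$, so $[\xi,\eta]\in\Gamma(E)$ and $E$ is involutive.

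The only conceptual obstacle is that, unlike the pre-symplectic case where $d\omega=0$ forces $\LL_\xi\omega=0$ outright, here $\omega$ is not closed and one might worry that the twisting term $b\wedge\omega$ breaks integrability. The point to emphasize is that the twist merely rescales $\omega$ by the pointwise conformal factor $-b(\xi)$, and a conformal multiple of $\omega$ has exactly the same null space as $\omega$; this is precisely why the Lee form $b$ is harmless and the symplectic-case argument survives verbatim after this one modification.
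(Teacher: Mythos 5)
Your proof is correct and follows essentially the same route as the paper: the paper's one-line argument is that \eqref{eq:dbomega=0} makes the ideal generated by $\omega$ a differential ideal, and your Cartan-calculus computation is precisely the verification behind that claim. In particular your key identity $\LL_\xi\omega=-b(\xi)\,\omega$ for $\xi$ tangent to $E$ is exactly the statement the paper records later as Proposition \ref{tr-sy-form}(2), so nothing is missing.
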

\begin{proof} This is an immediate consequence \eqref{eq:dbomega=0}
which shows that the ideal generated by $\omega$ is a differential
ideal.
\end{proof}
We call the corresponding foliation the {\it null foliation} on
$Y$ and denote it by $\CF$.

We now consider the dual bundle $\pi: E^* \to Y$ of $E$.
The bundle
$TE^*|_Y$ where $Y \subset E^*$ is the zero section of $E^*$
carries the canonical decomposition
$$
TE^*|_Y = TY \oplus E^*.
$$
In the standard notation in the foliation theory, $E$ and $E^*$ are denoted by
$T\FF$ and $T^*\FF$ and called the tangent bundle (respectively
cotangent bundle) of the foliation $\FF$.

\begin{rem}\label{rem:NY=E*} When $Y$ is a coisotropic submanifold of an l.c.s. manifold
$(X,\omega_X,\frak b)$ and $(Y,\omega,b)$ is the associated l.c.p-s. structure, then
it is easy to check that the canonical isomorphism
$$
\widetilde \omega_0: TX \to T^*X
$$
maps $E = TY^\omega$ to the conormal $N^*Y \subset T^*X$
and so its adjoint $(\widetilde \omega_0)^\dagger: TX \to T^*X$
induces an isomorphism between $NY = TX/TY$ and $E^*$ where
$E = (TY)^\omega$.
\end{rem}

We choose a splitting
\begin{equation}\label{eq:splitting}
TY = G \oplus E, \quad E = (TY)^\omega,
\end{equation}
and denote by
$$
p_G: TY \to E
$$
the projection to $E$ along $G$ in the splitting
(\ref{eq:splitting}).
Using this splitting, we can write a conformal symplectic form on a
neighborhood of the zero section $Y \hookrightarrow E^*$ in the
following way similarly as in the symplectic case.  We have the bundle map
\begin{equation}\label{eq:bundlemap}
TE^* \stackrel{T\pi}\longrightarrow TY \stackrel{p_G}
\longrightarrow E.
\end{equation}

Let $\widehat\alpha\in E^*$ and $\xi \in T_{\widehat \alpha} E^*$.
We define the one-form $\theta_G$ on $E^*$ by its value
\begin{equation}\label{eq:thetag}
\theta_{G,\widehat\alpha}(\xi): = \widehat\alpha(p_G\circ T\pi(\xi))
\end{equation}
at each $\widehat \alpha \in E^*$.
Then the two form
\begin{equation}\label{eq:omega*}
\omega_G:= \pi^*\omega - d\theta_G - \pi^*b \wedge \theta_G
\end{equation}
is non-degenerate in a neighborhood $U \subset E^* $ of the zero
section (See the coordinate expression (\ref{eq:dthetaG}) of $d
\theta_G$ and $\omega_G$). Later, we  use $\omega _G$ and $\omega _U$
 interchangeably depending on context.

Then a straightforward computation proves

\begin{prop}
Then the pair $(U, \omega_U, \frak b_U)$ with $\frak b_U := \pi^*b|_U$
defines an l.c.s. structure.
\end{prop}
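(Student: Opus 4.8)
We must show that $(U, \omega_U, \frak b_U)$ with $\frak b_U = \pi^*b|_U$ and $\omega_U = \omega_G = \pi^*\omega - d\theta_G - \pi^*b \wedge \theta_G$ is an l.c.s. structure, i.e. that $\omega_G$ is nondegenerate (already asserted) and satisfies $d^{\frak b_U}\omega_G = d\omega_G + \frak b_U \wedge \omega_G = 0$.

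The plan is to verify directly the two conditions defining an l.c.s. structure: that $\frak b_U$ is closed, and that $\omega_U$ satisfies the twisted-closedness relation $d^{\frak b_U}\omega_U := d\omega_U + \frak b_U \wedge \omega_U = 0$; nondegeneracy of $\omega_U = \omega_G$ on a neighborhood $U$ of the zero section has already been recorded after \eqref{eq:omega*}. Closedness of $\frak b_U = \pi^*b$ is immediate: since $(Y,\omega,b)$ is l.c.p-s., $b$ is closed, and pullback commutes with the exterior derivative, so $d\frak b_U = \pi^*(db) = 0$.

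The efficient route to the twisted-closedness relation is to phrase everything through the twisted differential $d^{\frak b_U} = d + \frak b_U\wedge(\cdot)$ and to exploit two of its formal properties. First, because $\frak b_U = \pi^*b$ is closed, $d^{\frak b_U}$ squares to zero, since $(d^{\frak b_U})^2(\cdot) = d\frak b_U \wedge (\cdot) = 0$. Second, $d^{\frak b_U}$ is natural under pullback along $\pi$, in the sense that $d^{\pi^*b}\circ \pi^* = \pi^*\circ d^b$; this follows termwise from $d\circ\pi^* = \pi^*\circ d$ and $\pi^*b\wedge\pi^*(\cdot) = \pi^*(b\wedge(\cdot))$.

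With these in hand I rewrite the defining formula \eqref{eq:omega*} compactly as $\omega_G = \pi^*\omega - d^{\frak b_U}\theta_G$, because $d^{\frak b_U}\theta_G = d\theta_G + \pi^*b\wedge\theta_G$ is precisely the combination subtracted from $\pi^*\omega$ in \eqref{eq:omega*}. Applying $d^{\frak b_U}$ gives $d^{\frak b_U}\omega_G = d^{\frak b_U}(\pi^*\omega) - (d^{\frak b_U})^2\theta_G$. The first term equals $\pi^*(d^b\omega) = 0$ by naturality together with the l.c.p-s. relation \eqref{eq:dbomega=0} for $(Y,\omega,b)$, while the second term vanishes because $d^{\frak b_U}$ squares to zero. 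Hence $d^{\frak b_U}\omega_G = 0$, which is the required relation $d\omega_U + \frak b_U\wedge\omega_U = 0$.

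There is no serious obstacle in this argument --- the statement is rightly called a straightforward computation --- and its content is entirely bookkeeping once the twisted differential is used. The one point that genuinely requires separate treatment is nondegeneracy of $\omega_G$, which is not a consequence of the twisted-closedness computation. For this I would argue from the coordinate expression for $d\theta_G$ that $\omega_G$ restricts to a nondegenerate form along the zero section $Y$ --- the pairing between $E^*$ and $E = (TY)^\omega \subset TY$ supplied by $-d\theta_G$, combined with $\pi^*\omega$ on the complement $G$ in \eqref{eq:splitting} --- and then invoke openness of the nondegeneracy condition to conclude that $\omega_G$ stays nondegenerate on a neighborhood $U$ of $Y$.
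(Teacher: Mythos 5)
Your proof is correct and is precisely the ``straightforward computation'' the paper alludes to without writing out: rewriting \eqref{eq:omega*} as $\omega_G = \pi^*\omega - d^{\pi^*b}\theta_G$ (the form the paper itself uses later in \eqref{eq:omegaE*}), then using $(d^{\pi^*b})^2=0$ and $d^{\pi^*b}\circ\pi^* = \pi^*\circ d^b$ together with \eqref{eq:dbomega=0}. Your separate treatment of nondegeneracy via the coordinate expression and openness also matches the paper's parenthetical remark following \eqref{eq:omega*}.
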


\begin{rem}\label{rem:lcs} If $Y$ is Lagrangian, then $E = TY$,
$ E^*_{| Y} = T^* Y$, and  hence  $p_G = Id$. For this special case,    Proposition \label{pro:lcs}
 is known as Example 3.1 in \cite{haller-ryb}, where $\theta_G$ is the Liouville 1-form on $T^* Y$.
\end{rem}

In the next section, we will prove that this provides a general normal
form of the l.c.s. neighborhood of the triple $(Y,\omega,b)$ which depends only on $(Y,\omega,b)$
and the splitting (\ref{eq:splitting}), and that
this normal form is unique up to diffeomorphism.  We call
the pair $(U,\omega_U,\frak b_U)$ a {\it (canonical) l.c.s.
 thickening}
of the l.c.p-s. manifold $(Y,\omega,b)$.

\section{Normal form theorem of coisotropic submanifolds in l.c.s. manifold}
\label{sec:coiso-normal}

Let $Y$ be a compact coisotropic submanifold
in a l.c.s. manifold $(X, \omega_X, \frak b)$. Denote by $(Y,\omega,b)$
the induced l.c.p-s. structure given by
$$
\omega = i^*\omega_X, \, b = i^*\frak b,
$$
where $i : Y \to X$ is the canonical embedding. We denote by $(Ti)^*$
the associated  bundle map $T^*X  \to T^* Y$.
Consider the bundle $\pi: E^* \to Y$  where
$E = (TY)^\omega= T\CF$ as in the previous section.

By Remark \ref{rem:NY=E*}, the adjoint isomorphism
$$
(\widetilde \omega_0^\dagger): TX \to T^*X
$$
induces an isomorphism
\begin{equation}
\widetilde \omega_X: NY = TX/TY \to E^*.\label{eq:wto}
\end{equation}

More precisely, we have the following lemma

\begin{lem}\label{lem:ny} The 
nondegenerate two-form $\omega_X$ induces a canonical
bundle isomorphism
$
\widetilde \omega_X: TX|_Y/TY \to E^*
$
given by \eqref{eq:[v]}.
\end{lem}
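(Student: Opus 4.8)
The plan is to write down the claimed isomorphism by an explicit pointwise pairing and then verify that it is well defined, fibrewise bijective, and smooth in the base point. Concretely, for $p \in Y$, a class $[v] \in T_pX/T_pY$ represented by $v \in T_pX$, and a vector $e \in E_p = (T_pY)^\omega$, I would set
\begin{equation}\label{eq:[v]}
\widetilde\omega_X([v])(e) := \omega_X(v,e).
\end{equation}
This is the explicit form of the bundle map anticipated in Remark \ref{rem:NY=E*}, obtained from the adjoint isomorphism $(\widetilde\omega_0)^\dagger$ and now read as a pairing between $NY$ and $E$. The four things to check are that \eqref{eq:[v]} is independent of the representative $v$, that $e \mapsto \widetilde\omega_X([v])(e)$ is the zero functional only when $[v]=0$, that source and target have the same rank, and that the assignment varies smoothly with $p$.

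The decisive point, on which well-definedness rests, is the identification $E_p = (T_pY)^{\omega_X}$, where $(T_pY)^{\omega_X}$ denotes the $\omega_X$-orthogonal complement of $T_pY$ inside $T_pX$. Indeed, since $\omega = i^*\omega_X$ one has $E_p = \{v \in T_pY : \omega_X(v,w)=0 \ \forall w \in T_pY\} = T_pY \cap (T_pY)^{\omega_X}$, and the coisotropy of $Y$ is precisely the statement $(T_pY)^{\omega_X} \subseteq T_pY$, so this intersection equals $(T_pY)^{\omega_X}$. Consequently, if $v \in T_pY$ then $\omega_X(v,e)=0$ for every $e \in E_p = (T_pY)^{\omega_X}$, which shows that \eqref{eq:[v]} depends only on $v$ modulo $T_pY$ and thus defines an element of $E_p^*$.

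For injectivity, the kernel of $\widetilde\omega_X$ at $p$ consists of the classes $[v]$ with $\omega_X(v,e)=0$ for all $e \in E_p$, that is, with $v \in (E_p)^{\omega_X}$. Using $E_p = (T_pY)^{\omega_X}$ together with the double-orthogonal identity $(W^{\omega_X})^{\omega_X}=W$, valid because $\omega_X$ is nondegenerate, this forces $v \in T_pY$ and hence $[v]=0$. A dimension count then upgrades the fibrewise injection to an isomorphism: with $\dim X = 2n$ and $\operatorname{codim} Y = k$, coisotropy gives $\dim E_p = 2n - \dim T_pY = k = \dim N_pY = \dim E_p^*$. Finally, since $\omega_X$ is a smooth tensor and $E$ is a smooth subbundle of $TX|_Y$ of constant rank, the pairing \eqref{eq:[v]} assembles into a smooth bundle map $TX|_Y/TY \to E^*$ that is a fibrewise isomorphism, hence a bundle isomorphism. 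I expect the only genuine subtlety to be this well-definedness step, where the coisotropy hypothesis must be used in the sharp form $E = (TY)^{\omega_X}$; once that is in hand, injectivity, the rank count, and smoothness are routine symplectic linear algebra carried out fibrewise.
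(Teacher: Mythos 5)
Your proof is correct and follows essentially the same route as the paper: the map $[v]\mapsto \omega_X(v,\cdot)|_E$ is exactly the paper's composition $j^*\circ\widetilde\omega_{XY}$ with $\widetilde\omega_{XY}(v)=(Ti)^*(v\rfloor\omega_X)$, well-definedness comes from $E$ pairing trivially with $TY$, and bijectivity from nondegeneracy of $\omega_X$. You merely spell out the injectivity (via the double-orthogonal identity) and the rank count that the paper leaves implicit.
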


\begin{proof} We define the bundle map
\be\label{eq:vtoomegaXv}
\widetilde \omega_{XY}: TX|_Y \to T^*Y, \,  v \mapsto(Ti)^* (v \rfloor \omega_X).\nonumber 
\ee

Denote by $j^*: T^* Y \to E ^*$ the adjoint of  $j: E \to TY$. Then $E = \ker \omega$  implies that
$$
TY \subset \ker (j ^*  \circ \widetilde \omega_{XY}).
$$
Hence $j ^* \circ \widetilde \omega_{XY}$ descends to a  bundle map $\widetilde \omega_X: NY = TX|_Y/TY \to E^*$
by setting
\be\label{eq:[v]}
[v] \in NY \mapsto j ^*\circ \widetilde \omega_{XY}(v).
\ee
The nondegeneracy of $\omega_X$ 
implies that
$\widetilde \omega_X $ induces a canonical bundle isomorphism.
\end{proof}

Using this isomorphism, we identify the pair $(NY,Y)$ with $(E^*,o_{E^*})$.

Now let $g$ be a   Riemannian metric  on $X$. Then $g$ gives rise to a splitting
$$
TX|_Y \cong TY \oplus NY.
$$
We also have a canonical isomorphism
\be\label{eq:identify}
TE^*|_Y \cong TY \oplus E^*.
\ee
Combining (\ref{eq:identify}) with Lemma \ref{lem:ny} we get
$$
TE^*|_{o_{E^*}} \cong To_E^* \oplus E^* \cong TY \oplus NY \cong \quad TX|_Y.
$$
Through this identification, we regard a neighborhood $U_1 \subset X$ of $Y$ as a neighborhood of the zero section
$o_{E^*} = Y \subset E^*$.

For any open set $U \subset X$ we denote the restriction of
$\omega_X$ (resp. $ \frak b$) to $U$ also by $\omega _X$ (resp. by $\frak b$).
In this section, we prove the following normal form theorem.

\begin{thm}[Normal form]\label{thm:normalform}  Assume that $Y$ is compact.
There exist  an open neighborhood  $U\subset X$ of  $Y$,
a neighborhood  $V \subset E^*$ of  the zero  section $Y$,   and  a l.c.s. diffeomorphism
$$
\phi: (U,\omega_X,\frak b) \to (V, (\omega_G)|_V, \pi^*b)
$$
such that $\phi|_Y = Id$ and  $d\phi|_{NY} = \widetilde \omega_X$ under the identification \eqref{eq:identify}.
More specifically, $\phi$ satisfies
$$
\phi^*(\pi^*\omega - d\theta_G - \pi^*b \wedge \theta_G) = e^{-f} \omega_X
$$
for some $f \in C^\infty(U)$.
\end{thm}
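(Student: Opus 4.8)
The plan is to prove the theorem by a \emph{conformal} Moser deformation argument adapted to the twisted differential $d^b$ of \eqref{eq:dbomega=0}, rather than to the ordinary de Rham differential. Using the metric splitting together with the identification \eqref{eq:identify} and Lemma \ref{lem:ny}, I first regard a neighborhood of $Y$ in $X$ as a neighborhood of the zero section $o_{E^*}$ in $E^*$; under this identification both the given form $\omega_X$ and the model form $\omega_G=\pi^*\omega-d\theta_G-\pi^*b\wedge\theta_G$ of \eqref{eq:omega*} are nondegenerate two-forms defined near $Y$, and I must interpolate between them by a diffeomorphism that is conformal in the l.c.s. sense of Definition \ref{morphism}. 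The essential new feature compared with the symplectic (Gotay/\cite{oh-park}) case is that the interpolating family must remain $d^b$-closed and that the resulting diffeomorphism is only an l.c.s. morphism, i.e. satisfies $\phi^*\omega_G=e^{-f}\omega_X$ together with the matching transformation law for the Lee forms.

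First I would arrange agreement of the two structures along $Y$. Because $\theta_G$ vanishes on $o_{E^*}$, the definition \eqref{eq:thetag} of $\theta_G$ together with the bundle isomorphism $\widetilde\omega_X$ of Lemma \ref{lem:ny} forces $\omega_X$ and $\omega_G$ to have the same $1$-jet along $Y$; this is the l.c.s. analogue of the corresponding step in \cite{oh-park} and uses the explicit coordinate expression of $d\theta_G$ referenced in \eqref{eq:omega*}. Next I equalize the Lee forms. The one-form $\beta:=\frak b-\pi^*b$ is closed and vanishes on $Y$, so by the relative Poincar\'e lemma $\beta=dh$ with $h|_Y=0$; since $\beta=O(|v|)$ in the fiber direction, $h=O(|v|^2)$. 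Setting $\omega_X':=e^{h}\omega_X$ one computes $d^{\pi^*b}\omega_X'=e^{h}(dh-\beta)\wedge\omega_X=0$, so $(\omega_X',\pi^*b)$ is l.c.s. with the same Lee form $b:=\pi^*b$ as the model, and because $h=O(|v|^2)$ this conformal change does not disturb the $1$-jet matching along $Y$.

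The heart of the argument is a relative \emph{twisted} Poincar\'e lemma. Let $r_s$ denote fiberwise scaling on $E^*$, with vertical generating vector field $Z_s$, so that $r_1=\mathrm{Id}$ and $r_0$ is the retraction onto $o_{E^*}$, and let $K\alpha=\int_0^1 r_s^*(\iota_{Z_s}\alpha)\,ds$ be the usual homotopy operator with $dK+Kd=\mathrm{Id}-r_0^*$. Since $b=\pi^*b$ is horizontal we have $b(Z_s)=0$ and $r_s^*b=b$, whence $K(b\wedge\alpha)=-b\wedge K\alpha$, and therefore
\[
d^{b}K+Kd^{b}=\mathrm{Id}-r_0^*.
\]
Applying this to $\dot\omega:=\omega_G-\omega_X'$, which is $d^b$-closed and, by the $1$-jet matching, vanishes to first order along $Y$ (in particular $r_0^*\dot\omega=0$), yields a primitive $\mu:=K\dot\omega$ with $d^b\mu=\dot\omega$ and, since $K$ raises the order of vanishing, $\mu=O(|v|^2)$. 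I then set $\omega_t:=(1-t)\omega_X'+t\,\omega_G$, nondegenerate near $Y$ uniformly in $t\in[0,1]$ by compactness of $Y$, and define the Moser vector field $\xi_t$ by $\iota_{\xi_t}\omega_t=-\mu$; thus $\xi_t=O(|v|^2)$, so its flow $\phi_t$ exists on a smaller neighborhood, fixes $Y$ pointwise, and has $d\phi_t|_{NY}=\mathrm{Id}$, giving $\phi|_Y=\mathrm{Id}$ and $d\phi|_{NY}=\widetilde\omega_X$.

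Finally I would track the conformal factor. Using $d\omega_t=-b\wedge\omega_t$ and $d^b\mu=\dot\omega$ one finds $\LL_{\xi_t}\omega_t+\dot\omega=-(b(\xi_t))\,\omega_t$, so with $u_t:=b(\xi_t)$ the pulled-back family obeys the scalar linear ODE
\[
\tfrac{d}{dt}\big(\phi_t^*\omega_t\big)=-(u_t\circ\phi_t)\,\phi_t^*\omega_t,
\]
whose solution gives $\phi_1^*\omega_G=e^{-F}\omega_X'=e^{h-F}\omega_X$ with $F=\int_0^1 u_s\circ\phi_s\,ds$; putting $f:=F-h$ yields $\phi^*\omega_G=e^{-f}\omega_X$. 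The same computation gives $\tfrac{d}{dt}\phi_t^*b=d(u_t\circ\phi_t)$, hence $\phi^*\pi^*b=\pi^*b+dF=\frak b+df$, which is exactly the Lee-form transformation law of Definition \ref{morphism} with conformal factor $a=e^{f}$, so $\phi$ is the desired l.c.s. diffeomorphism. The main obstacle I expect is precisely this twisted relative Poincar\'e lemma with control on the vanishing order of $\mu$: one must verify both the first-order agreement of $\omega_X$ and $\omega_G$ along $Y$ (from the explicit $d\theta_G$ and $\widetilde\omega_X$) and the horizontality identity $b(Z_s)=0$ that makes the fiberwise homotopy intertwine with $d^b$ — this replacement of de Rham by Novikov cohomology being the genuinely l.c.s. ingredient.
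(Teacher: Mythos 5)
Your proposal is correct and follows the same overall architecture as the paper's proof: identify a tubular neighborhood of $Y$ with a neighborhood of the zero section of $E^*$ via the exponential map and the isomorphism $\widetilde\omega_X$ of Lemma \ref{lem:ny}, conformally rescale so that the Lee form becomes exactly $\pi^*b$, write the difference of the two nondegenerate forms as $d^{\pi^*b}$ of a one-form, and run a Moser flow whose generating vector field is cut out by that primitive, tracking the conformal factor through $u_t=b(\xi_t)$. The one place where you genuinely diverge is in producing the twisted primitive: the paper invokes Lemma \ref{lem:sheaf} (homotopy invariance of the cohomology of the locally constant sheaf $\ker d^b$) to conclude $\widetilde\omega_1-\widetilde\omega_0=d^{\pi^*b}\sigma$, whereas you construct the primitive explicitly as $\mu=K\dot\omega$ from the fiberwise scaling homotopy together with the identity $K(b\wedge\alpha)=-b\wedge K\alpha$, valid because $\pi^*b$ annihilates the radial vector field. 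Your route buys something the paper's does not make explicit: the quantitative statement $\mu=O(|v|^2)$, which is exactly what is needed to get $\xi_t|_Y=0$ and $D\xi_t|_Y=0$, hence $\phi|_Y=\mathrm{Id}$ and $d\phi|_{NY}=\mathrm{Id}$; the paper's abstractly produced $\sigma$ is never shown to vanish along $Y$, so your version actually supplies a detail the published argument needs.

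Two small inaccuracies, neither fatal. First, the identification via $\widetilde\omega_X$ only forces $\omega_X$ and $\omega_G$ to agree \emph{at points of} $Y$ (equality of bilinear forms on $T_yE^*$ for $y\in Y$, i.e.\ the paper's \eqref{eq:ini}); agreement of full $1$-jets is neither established nor needed, since a two-form whose coefficients merely vanish on $Y$ already has $K$-image of order $O(|v|^2)$ because the contraction with the radial field contributes one additional power of $|v|$. Second, $\beta=\frak b-\pi^*b$ need not vanish at points of $Y$ --- only its pullback $i^*\beta$ does, since $\frak b$ may have nonzero normal components along $Y$ --- so $h=K\beta$ is only $O(|v|)$, not $O(|v|^2)$, and $e^h\omega_X$ may well have a different $1$-jet along $Y$ than $\omega_X$. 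But $h|_Y=0$ preserves the $0$-jet, and by the previous remark that is all the Moser step requires, so your argument goes through as written once these two claims are weakened to what is actually true.
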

\begin{proof}  Assume that $U_1$ be  a neighborhood of $Y$ in $X$  which can be
identified with a neighborhood $W_1$ of $Y$ in $NY$  via the exponential map  $Exp_Y: NY \to U_1$.
Set
\begin{equation}
\omega_1 : = Exp_Y^*(\omega_X), \quad \frak b_1 : = Exp_Y^*(\frak b).\label{eq:omega1}
\end{equation}
Then $(W_1,\omega_1, \frak b_1)$ is a l.c.s. manifold. Since the restriction of
$dExp_Y$ to $Y$ is equal to identity,
$Y$ is also a coisotropic submanifold in $(W_1,\omega_1, \frak b_1)$.
Let $ i_X: Y \hookrightarrow X$ be  the inclusion. Set 
\begin{eqnarray}
V &: =& \widetilde \omega_X(W_1),  \quad  b  : =   i_X^*\frak b\in \Omega ^1 (Y), \nonumber\\
\widetilde \omega _1 &: = & (\widetilde \omega_X ^{-1} ) ^* (\omega_1)\in \Omega^2 (V),
\quad \widetilde {\frak b}_1: = (\widetilde \omega_X ^{-1} ) ^* (\frak b_1)\in \Omega^1 (V).
\end{eqnarray}
Denote by $i_{E^*}: Y \hookrightarrow E^*$ the inclusion as the zero section
and  by $H_b ^* (Y)$  the cohomology group $\ker d ^b / \mathrm{ im }\, d ^b$.

\begin{lem}\label{lem:sheaf} The embedding $i_{E^*} : Y \to E^*$ induces an isomorphism between
$H_b^*(Y)$ and $H_{\pi^*b}^*(E^*)$. 
In particular, there exists a one-form $\eta \in \Omega^1(E^*)$
such that $\widetilde\omega_1 - \pi^*(\omega_1|_Y) = d^{\pi^*b}(\eta)$.
\end{lem}
\begin{proof} Denote by $\CS$ the   following locally constant sheaf  on $Y$
$$U \mapsto \CS (U): = \{f \in C^\infty(U, \R)|\, d ^{b|_U} f = 0\}.$$
It is known that $H_{b} (Y) = H(Y,\CS)$, see e.g. \cite[Remark 1.10]{haller-ryb}.
The first assertion  of Lemma \ref{lem:sheaf} follows from the homotopy invariant
property of  cohomology with values in locally constant sheaf.
The second assertion of Lemma \ref{lem:sheaf} is a consequence of the first assertion.
\end{proof}

Since $H^1(E^*, \R) = H^1(Y, \R)$ there exists a function $ f \in C^\infty (E^*)$ such that
$\eta = \pi^*(i^*\eta) + df$. 
 Then $e^{f} \widetilde \omega$ is an l.c.s.  form on $E^*$
with the Lee form $\widetilde {\frak b}_1 - df  = \pi^*(\frak b|_Y)$
\cite{Le1943}.

Now we apply Moser's deformation to the normal form. Set
$$
\widetilde \omega_0:=\omega_{G}|_V.
$$
By (\ref{eq:omega*}) and (\ref{eq:omega1}) we have
\begin{equation}
\widetilde\omega_0(y) = \widetilde\omega_1(y)\text {  for all } y \in Y.\label{eq:ini}
\end{equation}
Since
$H^*_{\pi^*b}(V) \cong H^*_{\widetilde{\frak b}_1}(V)$, there exists a one form $\sigma$
on $V$ such that
$$
\widetilde\omega_1 - \widetilde\omega_0 = d^{\pi^*b}\sigma.
$$
Set
$$
\widetilde\omega_t: = \widetilde\omega_0 + td^{\pi^*b} \sigma = \pi^*(\omega_1|_Y) - d^{\pi^*b} (\theta_G - t \sigma)
$$
By making $V$ smaller if necessary, taking into account (\ref{eq:ini})  and  the compactness of $Y$,
we assume that $\widetilde\omega_t$ are nondegenerate
for all $t \in [0,1]$.
To prove Theorem \ref{thm:normalform}, it suffices to solve the equation
\be\label{eq:tosolve}
\psi_t^* (\widetilde\omega_t) = e ^{f_t(x)} \widetilde\omega_0
\ee
for a family of diffeomorphism $\psi_t $ of $V$ and a function $f_t$ with $f_1 = f$.
Let $\xi_t$ be the generating  vector field of $\psi_t$ i.e.
$$
{d\over dt} \psi_t = \xi_t (\psi _t), \, \psi_0 = Id.
$$
Differentiating \eqref{eq:tosolve}, we obtain
$$
\psi_t^*\left({d\over dt} \widetilde\omega_t +  \CL_{\xi_t}\widetilde \omega_t\right)
= \frac{\del f}{\del t}\widetilde \omega_0
$$
which is equivalent to
$$
{d\over dt} \widetilde\omega_t +  \CL_{\xi_t}\widetilde \omega_t = \frac{\del f}{\del t} \circ \psi_t^{-1}.
$$
But by definition of $\omega_t$ and Cartan's formula, this becomes
$$
d^{\pi^*b} \sigma + \xi_t \rfloor d\widetilde\omega_t + d(\xi_t \rfloor\widetilde \omega_t) = g_t,
\quad g_t(x) = \frac{\del f}{\del t}(\psi_t^{-1}(x)).
$$
which in turn becomes
$$
g_t  =  d^{\pi^*b} \sigma - \xi_t \rfloor(\pi^*b \wedge\widetilde \omega_t)
+ d^{\pi^*b} (\xi_t\rfloor\widetilde \omega_t)
- \pi^*b \wedge (\xi_t \rfloor \widetilde\omega_t).
$$
In other words, we obtain
$$
(g_t - \pi^*b(\xi_t))\widetilde \omega_t = d^{\pi^*b} (\sigma + \xi_t \rfloor\widetilde \omega_t ).
$$
Using non-degeneracy of $\widetilde\omega_t$, we first solve
$$
\sigma + \xi_t \rfloor \widetilde\omega_t = 0
$$
for $\xi_t$ on $V$ and then define $g_t$ by
$$
g_t = \pi^*b (\xi_t)
$$
for all $(t,x) \in [0,1] \times V$ again shrinking $V$, if necessary.
We denote by $\psi_t$ the flow of $\xi_t$ which then determines $f_t$ by
$f_t = g_t \circ \psi_t$.

This proves Theorem \ref{thm:normalform}.
\end{proof}

\section{Geometry of the null foliation of l.c.p-s. manifold}
\label{sec:nullfoliation}

Let $(Y,\omega,b)$ be
an l.c.p-s. manifold  of dimension $n+k$ and denote by $\FF$ the associated null
foliation.   Set $2n: = \dim  X$, $ n - k: = \dim  \FF$, $l : = 2k$. 
We now
formulate a uniqueness statement
in the symplectic thickening of $(Y,\omega)$ (Proposition \ref{diffeo}), extending an analogous  result in \cite{oh-park}.
We also prove the existence of a transverse   l.c.s. form (Proposition \ref{tr-sy-form}), which is  important for  later sections.

Recalling that the l.c.s. form $\omega_{G}$ of \eqref{eq:omega*}
depends on the choice of the splitting $\Pi$, in this section we redenote by $\omega_\Pi$
the l.c.s. form $\omega_{G}$ associated to the splitting $\Pi$.

\begin{prop}\label{diffeo}(cf. \cite[Proposition 5.1]{oh-park}) For given two splittings $\Pi, \, \Pi'$, there exist
neighborhoods $U, \, U'$ of the zero section $Y \subset E^*$ and a
diffeomorphism $\phi: U \to U'$ and a function $f: U \to \R$ such that
\begin{enumerate}
\item $\phi^*\omega_{\Pi'} = e^f \omega_{\Pi}$,
\item $\phi|_Y \equiv
id$, and $T\phi|_{T_YE^*} \equiv id$ where $T_YE^*$ is the
restriction of $TE^*$ to $Y$.
\end{enumerate}
\end{prop}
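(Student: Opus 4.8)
The plan is to run a Moser-type deformation argument exactly parallel to the proof of Theorem \ref{thm:normalform}. By the l.c.s.\ structure constructed in \eqref{eq:omega*}, both $\omega_\Pi$ and $\omega_{\Pi'}$ are l.c.s.\ forms on a neighborhood of the zero section carrying the \emph{same} Lee form $\pi^*b$; that is, $d^{\pi^*b}\omega_\Pi = d^{\pi^*b}\omega_{\Pi'} = 0$ in the sense of \eqref{eq:dbomega=0}. From the defining formula \eqref{eq:omega*} their difference is $d^{\pi^*b}$-exact with an explicit primitive: setting $\sigma := \theta_\Pi - \theta_{\Pi'}$, one computes $\omega_{\Pi'} - \omega_\Pi = d^{\pi^*b}\sigma$. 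The crucial feature of this primitive is that, by the definition \eqref{eq:thetag} of $\theta_G$, both $\theta_\Pi$ and $\theta_{\Pi'}$ vanish identically along the zero section (where $\widehat\alpha = 0$), so $\sigma|_Y = 0$. I then interpolate by $\omega_t := \omega_\Pi + t\, d^{\pi^*b}\sigma$, $t\in[0,1]$, which satisfies $d^{\pi^*b}\omega_t = 0$ for every $t$ since the condition is linear in $\omega_t$ and the Lee form is fixed.

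First I would check that $\omega_t$ stays nondegenerate near $Y$. Along the zero section the cross term of $\omega_t$ pairing the fibre $E^*$ with $TY$ is governed by the convex combination $p_t := (1-t)p_G + t\,p_{G'}$ of the two projections onto $E$; since $p_G$ and $p_{G'}$ both restrict to the identity on $E$, so does $p_t$, whence $p_t$ is again a projection of $TY$ onto $E$ along $\ker p_t$. Thus $\omega_t|_{T_YE^*}$ has exactly the nondegenerate normal-form shape of \eqref{eq:omega*} associated with the complement $\ker p_t$, and is nondegenerate at every point of $Y$ for all $t$. By compactness of $Y$ and continuity in $t$, after shrinking $V$ the forms $\omega_t$ are nondegenerate on $V$ uniformly in $t\in[0,1]$. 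With this in hand the Moser computation in the proof of Theorem \ref{thm:normalform} applies verbatim: writing $\xi_t$ for the generator of the sought flow $\psi_t$, the equation $\psi_t^*\omega_t = e^{f_t}\omega_\Pi$ reduces, via Cartan's formula and $d\omega_t = -\pi^*b\wedge\omega_t$, to $(g_t - \pi^*b(\xi_t))\,\omega_t = d^{\pi^*b}(\sigma + \xi_t\rfloor\omega_t)$. Using nondegeneracy I solve $\xi_t\rfloor\omega_t = -\sigma$ for $\xi_t$, set $g_t := \pi^*b(\xi_t)$, integrate the flow $\psi_t$ on a possibly smaller $V$, and let $f_t$ be determined by $g_t$ as in Theorem \ref{thm:normalform}. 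Taking $\phi := \psi_1$ and $f := f_1$ yields conclusion (1), namely $\phi^*\omega_{\Pi'} = e^f\omega_\Pi$.

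Conclusion (2) is where the care is needed, and it is the step I expect to be the main obstacle. Because $\sigma$ vanishes along the zero section and $\omega_t$ is nondegenerate, the generator $\xi_t$ defined by $\xi_t\rfloor\omega_t = -\sigma$ also vanishes along $Y$; hence its flow fixes $Y$ pointwise, giving $\phi|_Y \equiv id$ and therefore $T\phi|_{TY}\equiv id$ automatically. The delicate part is the normalization of the transverse derivative, i.e.\ controlling $T\phi$ on the fibre directions of $T_YE^*$. This is governed by the $1$-jet of $\xi_t$ along $Y$: since $\theta_\Pi$ and $\theta_{\Pi'}$ are fibrewise linear, $\sigma$ vanishes exactly to first order along $Y$, so the fibrewise linearization of $\xi_t$ along $Y$ need not vanish and $T\psi_t$ acts on $NY \cong E^*$ through the flow of these linearizations. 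To pin down conclusion (2) I would therefore analyze this linearized flow explicitly — equivalently, match the two normal forms to first order along $Y$ before applying Moser, if necessary after a preliminary fibrewise-linear bundle isomorphism of $E^*$ fixing $Y$. Verifying that this first-order datum is trivial, or can be trivialized without disturbing the Lee form $\pi^*b$, is the heart of the argument and the point on which I would spend the most effort.
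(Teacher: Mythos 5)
Your route is the same as the paper's. The paper picks a path $\{\Pi_t\}$ from $\Pi$ to $\Pi'$ using contractibility of $\AA_E(TY)$, sets $\omega_t := \omega_{\Pi_t}$, writes $\omega_t-\omega_0 = d^{\pi^*b}\sigma_t$ with $\sigma_t|_{T_YE^*}\equiv 0$, and runs the Moser scheme of Theorem \ref{thm:normalform}. Your affine path is precisely such a choice: since $\theta_{G_t}=(1-t)\theta_\Pi+t\,\theta_{\Pi'}$ for $p_t=(1-t)p_G+t\,p_{G'}$, your linear interpolation $\omega_\Pi+t\,d^{\pi^*b}\sigma$ equals $\omega_{\Pi_t}$ on the nose, and your nondegeneracy argument is a correct, more explicit version of what the paper leaves implicit. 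Part (1) and $\phi|_Y\equiv id$ are established.

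The step you flagged as open closes by a direct computation, and the preliminary fibrewise-linear correction you contemplate is neither needed nor capable of producing (2) in its literal reading --- indeed you have put your finger on a real imprecision in the statement. By \eqref{eq:thetaG}, $\sigma=\theta_\Pi-\theta_{\Pi'}=p_\beta(R'^\beta_i-R^\beta_i)\,dy^i$ annihilates the vertical distribution and $\operatorname{span}\{\del/\del q^\alpha\}$; feeding this into $\xi_t\rfloor\omega_t=-\sigma$ with the expression \eqref{eq:omegaU} shows that the components of $\xi_t$ along $\del/\del y^j$ and $\del/\del q^\alpha$ are $O(|p|)$ while the vertical component is $O(|p|^2)$. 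Hence the linearization of $\xi_t$ along $Y$ vanishes on $TY$ and maps the fibre $E^*$ into $TY$; it is nilpotent, so $T\psi_t|_{TY}=id$ and $T\psi_t(v)\equiv v \bmod T_yY$ for $v\in T_yE^*$, i.e.\ $\phi$ induces the identity on $NY\cong T_YE^*/TY\cong E^*$. This quotient statement is the correct reading of (2): the literal one is incompatible with (1), because $\omega_{\Pi'}-\omega_\Pi=dp_\beta\wedge(R'^\beta_i-R^\beta_i)\,dy^i$ is already nonzero at points of the zero section, so no $\phi$ with $T\phi|_{T_YE^*}=id$ exactly can pull $\omega_{\Pi'}$ back to a conformal multiple of $\omega_\Pi$. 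The paper's one-line justification (``$\sigma_t|_{T_YE^*}\equiv 0$, imitate Theorem \ref{thm:normalform}'') is to be understood in this normal-bundle sense, which is also all that is used downstream.
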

\begin{proof}
Since $\AA_E(TY)$ is contractible, 
we can choose a smooth family
$$
\{\Pi_t\}_{0 \leq t\leq 1}, \quad \Pi_0=\Pi, \, \Pi_1 = \Pi'.
$$
Denoting $\omega_t :=\omega_{\Pi_t}$, applying the isomorphism $H^1_b(Y) \cong H^1_{\pi^*b}(E^*)$, we have
$$
\omega_t - \omega_0 = d^{\pi^*b} \sigma_t.
$$
From the definition, we have
$$
\sigma_t|_{T_YE^*} \equiv 0.
$$
for all $0 \leq t \leq 1$. With these, we imitate the proof of Theorem \ref{thm:normalform}
to finish off the proof.
\end{proof}

For the study of the deformation problem of l.c.p-s.
structures it is crucial to understand the transverse geometry of
the null foliation. First we note that the l.c.p-s. form
$\omega$ carries a natural {\it transverse l.c.p-s. form}. This
defines the l.c.s. analog to the transverse symplectic form to the
null foliation of pre-symplectic manifold. (See \cite{gotay}, \cite{oh-park},
for example).

\begin{prop}\label{tr-sy-form}(cf. \cite[Proposition 5.2]{oh-park})   Let $\FF$ be the null foliation of
the l.c.p-s. manifold $(Y,\omega,b)$. Then it defines a transverse l.c.s. form
on $\FF$ in the following sense:
\enumerate
\item $\ker (\omega_x) = T_x\FF$ for any $x \in Y$, and
\item $\LL_\xi\omega = - b(\xi) \omega$ for any vector field $\xi$ on $Y$ tangent to $\FF$.
\endenumerate
\end{prop}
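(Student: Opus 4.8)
The plan is to prove the two assertions separately; both are direct consequences of the structural equation \eqref{eq:dbomega=0} together with the definition of the null foliation, so the proof should be short. I would begin by fixing, once and for all, that the characteristic distribution of Section~\ref{sec:neighborhoods}, namely $E = (TY)^\omega$, is in the abstract l.c.p-s.\ setting nothing but the pointwise kernel
$$
\ker \omega_x = \{v \in T_xY : v \rfloor \omega_x = 0\},
$$
since the $\omega$-orthogonal of all of $T_xY$ is exactly the set of vectors annihilated by $\omega_x$. The constant-rank hypothesis makes this a smooth distribution of constant dimension, and the integrability lemma of Section~\ref{sec:neighborhoods} shows it is involutive. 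As the null foliation $\FF$ is by definition the foliation tangent to $E$, we have $T_x\FF = E_x = \ker\omega_x$ for every $x \in Y$, which is precisely assertion (1).

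For assertion (2), let $\xi$ be a vector field tangent to $\FF$, so that $\xi \rfloor \omega = 0$ by (1). I would invoke Cartan's formula $\LL_\xi \omega = d(\xi \rfloor \omega) + \xi \rfloor d\omega$; the first term vanishes, leaving $\LL_\xi \omega = \xi \rfloor d\omega$. Substituting the l.c.p-s.\ equation in the form $d\omega = -\,b \wedge \omega$ and expanding the interior product by the antiderivation (Leibniz) rule
$$
\xi \rfloor (b \wedge \omega) = (\xi \rfloor b)\,\omega - b \wedge (\xi \rfloor \omega),
$$
one uses $\xi \rfloor b = b(\xi)$ and $\xi \rfloor \omega = 0$ to see the second term drop out, so that $\xi \rfloor d\omega = -\,b(\xi)\,\omega$. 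This yields $\LL_\xi \omega = -\,b(\xi)\,\omega$, which is assertion (2).

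The computation presents no genuine obstacle, as the whole content is already packaged into the defining equation \eqref{eq:dbomega=0} and Cartan's formula; the one point that demands care is the sign in the antiderivation rule applied to the degree-one form $b$ against the degree-two form $\omega$. Here the kernel condition $\xi \rfloor \omega = 0$ conveniently annihilates precisely the term that carries that sign, so the right-hand side collapses cleanly to the single term $-\,b(\xi)\,\omega$. I would emphasize in the write-up that this identity is the infinitesimal, leafwise shadow of the transverse conformal-symplectic behaviour: along directions tangent to $\FF$ the form $\omega$ is only conformally invariant, with conformal factor governed by the Lee form $b$, which is exactly the property making $\omega$ descend to a well-defined transverse l.c.s.\ form on $\FF$.
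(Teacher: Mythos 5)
Your proof is correct and follows essentially the same route as the paper's: assertion (1) is immediate from the definition of the null foliation as the integral foliation of $E=(TY)^\omega=\ker\omega$, and assertion (2) is Cartan's formula combined with $d\omega=-b\wedge\omega$ and the vanishing of $\xi\rfloor\omega$, with the same sign bookkeeping in the antiderivation rule. No gaps.
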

\begin{proof} The first statement is trivial by definition of the
null foliation and the second is an immediate consequence of the
Cartan identity
$$
\LL_\xi\omega = d(\xi \rfloor \omega) + \xi \rfloor d\omega.
$$
The first term vanishes since $X$ is tangent to the null foliation $\FF$.
On the other hand, the second term becomes
$$
\xi \rfloor d\omega =- \xi \rfloor (b \wedge \omega) = -b(\xi) \omega + b \wedge (\xi \rfloor \omega)
=- b(\xi) \omega
$$
which finishes the proof.
\end{proof}

One immediate consequence of the presence of the transverse l.c.p-s.
form above is that any {\it transverse section} $T$ of the
foliation $\FF$ carries a natural l.c.s. form: in any
foliation coordinates, it follows from $E = \ker \omega =
\operatorname{span}\{\frac{\del}{\del q^\alpha}\}_{1 \leq \alpha
\leq n-k}$ that we have
\begin{equation}\label{eq:piomegaY}
\pi^*\omega = \frac{1}{2}\sum_{2k \ge i> j\ge 1} \omega_{ij} dy^i \wedge dy^j,
\end{equation}
where $\omega_{ij} = \omega(\frac{\del}{\del y^i},\frac{\del}{\del
y^j})$ is skew-symmetric and invertible.

The condition (2) above implies
\be\label{eq:omegaijbeta}
\omega_{ij;\beta} = b_\beta \omega_{ij}
\ee
where $b = \sum_j b_j dy^j + \sum_\beta b_\beta dq^\beta$ in the foliation coordinates
$(y^1, \cdots, y^{2k}, q^1,\cdots q^{n-k})$.
Note that this expression is {\it independent} of
the choice of splitting as long as $y^1, \cdots, y^{2k}$ are those
coordinates that characterize the leaves of $E$ by
\begin{equation}\label{eq:leaves}
y^1 = c^1, \cdots, y^{\ell} = c^{\ell}, \quad \mbox{ $c^i$'s constant}.
\end{equation}
By the closedness of the one-form $b$, \eqref{eq:omegaijbeta}
gives rise to the following proposition.

\begin{prop}\label{holonomy}
Let $L$ be a leaf of the null foliation $\FF$ on $(Y,\omega)$,
$\lambda$ a path in $L$, and let $T$ and $S$  be transverse
sections of $\FF$ with $\lambda(0) \in T$ and $\lambda(1) \in S$.
Then the holonomy map
$$
hol^{S,T}(\lambda): (T,\lambda(0)) \to (S,\lambda(1))
$$
defines the germ of a l.c.s. diffeomorphism. In particular, each transversal $T$
to the null foliation carries a natural holonomy-invariant l.c.s. structure.
\end{prop}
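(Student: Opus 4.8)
The plan is to reduce to a single foliation flow box and then compose. Since the defining conditions of Definition \ref{morphism} are stable under composition --- the conformal factors multiply while their logarithms add --- it suffices to treat a short piece of $\lambda$ lying in one flow box with foliation coordinates $(y^1,\dots,y^{2k},q^1,\dots,q^{n-k})$ normalized as in \eqref{eq:leaves}. After passing to such a chart we may moreover take $T$ and $S$ to be the coordinate transversals $\{q=q(0)\}$ and $\{q=q(1)\}$, since the leafwise projection of an arbitrary transverse section onto a coordinate slice is again a germ of the type considered below and composes with it. In these coordinates the local holonomy $h=hol^{S,T}(\lambda)$ simply identifies the two slices by holding $y$ fixed, and by \eqref{eq:piomegaY} each slice carries the nondegenerate transverse form $\omega_T=\frac12\sum\omega_{ij}(y,q(0))\,dy^i\wedge dy^j$ together with the closed one-form $b_T$, hence is genuinely l.c.s.

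First I would track how $\omega$ rescales along the leaf. Proposition \ref{tr-sy-form}(2) gives $\LL_\xi\omega=-b(\xi)\omega$ for every $\xi$ tangent to $\FF$, which in coordinates is the linear system \eqref{eq:omegaijbeta}. Integrating this ODE along the leaf path, with the sign dictated by Proposition \ref{tr-sy-form}(2), yields $\omega_{ij}(y,q(1))=\omega_{ij}(y,q(0))\,e^{-F(y)}$, where $F(y):=\int_\lambda b$ is the integral of $b$ along the leaf path through $(y,q(0))$; only the $dq^\beta$-components of $b$ contribute, since $\lambda$ runs inside the leaf. Read back through \eqref{eq:piomegaY}, this is exactly $h^*\omega_S=e^{-F}\omega_T$, so the first relation of Definition \ref{morphism} holds with $1/a=e^{-F}$, i.e. $a=e^{F}$, a smooth nonvanishing function on $T$.

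Second I would verify the Lee-form compatibility $h^*b_S=b_T+d\ln|a|=b_T+dF$, which is where the closedness of $b$ is indispensable. Let $\Gamma(t,y)$ sweep out the leaf paths, with $\Gamma(0,\cdot)$ landing in $T$ and $\Gamma(1,\cdot)$ in $S$. From $\Gamma^*(db)=0$ and $[\partial_t,\partial_{y^i}]=0$ one gets $\partial_{y^i}\big((\Gamma^*b)(\partial_t)\big)=\partial_t\big((\Gamma^*b)(\partial_{y^i})\big)$, and integrating over $t\in[0,1]$ produces
\[ \frac{\partial F}{\partial y^i}=(\Gamma^*b)(\partial_{y^i})\big|_{t=1}-(\Gamma^*b)(\partial_{y^i})\big|_{t=0}=(h^*b_S-b_T)(\partial_{y^i}), \]
that is, $dF=h^*b_S-b_T$. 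Together with the previous step this exhibits $h$ as a germ of l.c.s. diffeomorphism. Composing over a chain of flow boxes covering $\lambda$ assembles the global germ; and since $\Gamma^*(db)=0$ also forces $F$, hence $a$, to depend only on the leaf-homotopy class of $\lambda$ rel endpoints, the holonomy germ is well defined. Applying this to loops based at a transversal shows that the holonomy pseudogroup acts there by l.c.s. automorphisms, giving the claimed holonomy-invariant l.c.s. structure.

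The rescaling of $\omega$ is essentially formal once \eqref{eq:omegaijbeta} is in hand; the real content is the identity $dF=h^*b_S-b_T$, which matches $d$ of the leafwise integral of $b$ with the jump in the transverse part of $b$ across the holonomy. I expect the main work to lie in this variation-of-integral computation and in checking its independence of the chosen flow box and of the sweeping family $\Gamma$ --- precisely the points at which the Novikov-type (rather than de Rham) nature of $d^b$ and the hypothesis $db=0$ are used.
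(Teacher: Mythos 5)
Your proof is correct and follows exactly the route the paper indicates (the paper offers no written proof beyond the remark that closedness of $b$ together with \eqref{eq:omegaijbeta} yields the proposition): you integrate $\partial\omega_{ij}/\partial q^\beta=-b_\beta\omega_{ij}$ along the leaf to get the conformal factor $e^{-F}$ with $F=\int_\lambda b$, and use $db=0$ to obtain $dF=h^*b_S-b_T$, which is precisely the pair of conditions in Definition \ref{morphism}. The composition over flow boxes and the homotopy-invariance of $F$ are handled correctly, so this is a faithful filling-in of the paper's one-line argument.
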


\section{Master equation and equivalence relations; classical part}
\label{sec:master}
Let us recall  the proof of the fact the a graph of a 1-form $s\in \Omega  ^1(L)$
is Lagrangian with respect to the canonical symplectic form
on $TL$ if and only if $ds = 0$. This fact is a direct consequence of the following formula
$$s^* (\theta) = s,$$
which is obtained  by
$$
\langle s^*(\theta), \delta q\rangle =  \langle \theta, s_* (\delta  q) \rangle
= s (\pi _* s_* \delta q) = s(\delta q)
$$
where $\delta q$ stands for the infinitesimal variation of $q$.
Similarly we will derive the second equation  for the graph $\Gamma_s$ of a section $s: Y
\to E^*\cong NY$  to be coisotropic with respect to $\omega_G$ (Theorem \ref{thm:coisotropic}). We  also
call the corresponding equation  the {\it classical part} of the master equation (cf. Theorem \ref{thm:2}).
We will study the full (local) moduli (with respect  to different equivalence relations) problem of
coisotropic submanifolds by analyzing the condition that the graph
of a section $s: Y \to U$ in the symplectic thickening $U$ is to
be coisotropic with respect to $\omega_G$ (Lemmas \ref{ham=coh}, \ref{lem:equi}).

\subsection{Description of coisotropic Granssmannian}
\label{subsec:grassmannian}

In this section, we recall some basic algebraic facts on
the coisotropic subspace $C$ (with real dimensions $n+k$ where $0\leq k\leq n$)
in $\C^n$ from \cite{oh-park}.  We denote by
$C^{\omega}$ the $\omega$-orthogonal complement of $C$ in
$\R^{2n}$ and by $\Gamma_k$ the set of coisotropic subspaces of
$(\R^{2n}, \omega)$. In other words,
\begin{equation}
\label{gammak} \Gamma_k = \Gamma_k(\R^{2n}, \omega) = :\{ C \in
Gr_{n+k}(\R^{2n}) \mid C^\omega \subset C\}.
\end{equation}
{}From the definition, we have the
canonical flag,
$$
0 \subset C^\omega \subset C \subset \R^{2n}
$$
for any coisotropic subspace.
We call $(C,C^\omega)$ a {\it coisotropic pair}. Combining this
with the standard complex structure on $\R^{2n} \cong \C^n$, we
have the splitting
\begin{equation}\label{eq:csplit}
C = H_C \oplus C^\omega
\end{equation}
where $H_C$ is the complex subspace of $C$.

Next we give a parametrization of all the coisotropic subspaces
near given $C \in \Gamma_k$. Up to the unitary change of
coordinates we may assume that $C$ is the canonical model
$$
C = \C^k \oplus \R^{n-k}.
$$
We denote the (Euclidean) orthogonal complement of $C$ by $C^\perp
= i \R^{n-k}$ which is canonically isomorphic to $(C^\omega)^*$
via the isomorphism $\widetilde\omega: \C^n \to (\C^n)^*$.
Then any nearby subspace of dimension $\dim C$ that
is transverse to $C^\perp$ can be written as the graph of the
linear map
$$
A: C \to C^\perp \cong (C^\omega)^*
$$
i.e., has the form
\begin{equation}\label{eq:ca}
C_A:=\{ (x, Ax) \in C \oplus C^\perp = \R^{2n} \mid x \in C \}.
\end{equation}

Denote $A = A_H \oplus A_I$ where
\begin{align}
A_H & : H = \C^k \to C^\perp \cong (C^\omega)^*,\nonumber\\
A_I & : C^\omega = \R^{n-k}  \to C^\perp\cong (C^\omega)^*.\nonumber
\end{align}
Note that the symplectic form $\omega$ induce the canonical
isomorphism
\begin{eqnarray*}
\widetilde\omega^H & : & \C^k \to (\C^k)^*, \\
\widetilde\omega^I & : & \R^{n-k}=C^\omega
\to (C^\omega)^*\cong C^\perp = i
\R^{n-k} .\end{eqnarray*}
With this identification, the symplectic form $\omega$ has the
form
\begin{equation}\label{eq:omega0}
\omega = \pi^*\omega_{0,k} + \sum_{i=1}^{n-k} dx_i\wedge dy^i,
\end{equation}
where  $\omega_{0,k}$ is the standard symplectic form  in $\C ^ k$,
 $\pi: \C^n \to \C^k$  the projection, and $(x_1,\cdots,
x_{n-k})$ the standard coordinates of $\R^{n-k}$ and $(y^1,
\cdots, y^{n-k})$ its dual coordinates of $(\R^{n-k})^*$. We also
denote by $\pi_H: (\C^k)^* \to \C^k$ the inverse of the above
mentioned canonical isomorphism $\widetilde \omega^H$. 

The following  statements are fundamental in  our work.

\begin{prop} \label{prop:coisotropic}
Let $C_A$ be as in \eqref{eq:ca}.
\begin{enumerate}
\item The subspace $C_A$ is coisotropic if and only
if $A_H$ and $A_I$ satisfies
\begin{equation}\label{eq:grapca}
A_I - (A_I)^* + A_H \pi_H (A_H)^* = 0.
\end{equation}
\item The subspace $C_A$   is coisotropic, if and only if
  $\omega ^{k +1}|_{C_A} = 0$.
\end{enumerate}
\end{prop}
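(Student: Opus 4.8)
The plan is to reduce both assertions to a single rank computation for the restriction of $\omega$ to $C_A$, and then to read the explicit equation in (1) off the radical of that restriction. Write $\iota_A\colon C\to\R^{2n}$ for the graph parametrization $\iota_A(w)=(w,Aw)$ and set $\Omega:=\iota_A^*\omega$, a skew bilinear form on $C=\C^k\oplus\R^{n-k}$, a space of dimension $n+k$. Since $\iota_A$ is a linear isomorphism onto $C_A$, coisotropy of $C_A$ is equivalent to a statement about $\Omega$. The basic observation is that $\dim C_A^\omega=2n-(n+k)=n-k$ and $C_A\cap C_A^\omega\subset C_A^\omega$, so that
\[
\rank(\omega|_{C_A})=(n+k)-\dim(C_A\cap C_A^\omega)\ge 2k
\]
always, with equality precisely when $C_A\cap C_A^\omega=C_A^\omega$, i.e. when $C_A^\omega\subset C_A$. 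Hence $C_A$ is coisotropic if and only if $\rank\Omega=2k$, equivalently if and only if the radical of $\Omega$ attains its maximal possible dimension $n-k$.

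For (2) I would then invoke the elementary fact that a skew form of rank $2r$ satisfies $\beta^{\,r}\neq 0$ and $\beta^{\,r+1}=0$. Applied to $\beta=\omega|_{C_A}$ this gives $\omega^{k+1}|_{C_A}=0\iff\rank(\omega|_{C_A})\le 2k$. Combined with the inequality $\rank(\omega|_{C_A})\ge 2k$ from the previous paragraph, the vanishing $\omega^{k+1}|_{C_A}=0$ forces the rank to equal $2k$, hence $C_A$ coisotropic, and conversely. Thus (2) follows with no further computation, and does not even use the special form of $A$.

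For (1) I would compute $\Omega$ explicitly from \eqref{eq:omega0}. Writing $w_j=(z_j,u_j)$ with $z_j\in\C^k$ and $u_j\in\R^{n-k}$, so that $\iota_A(w_j)=(z_j,u_j,A_Hz_j+A_Iu_j)$ with the last slot in $C^\perp\cong(C^\omega)^*$, the coordinate expression \eqref{eq:omega0} yields
\[
\Omega(w_1,w_2)=\omega_{0,k}(z_1,z_2)+\langle u_1,A_Hz_2\rangle-\langle u_2,A_Hz_1\rangle+\langle u_1,(A_I-A_I^*)u_2\rangle .
\]
To find the radical I would impose $\Omega(w_1,\cdot)=0$ and separate the conditions coming from the free variables $z_2$ and $u_2$. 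The $z_2$-condition, using $\omega_{0,k}(z_1,z_2)=\langle\widetilde\omega^Hz_1,z_2\rangle$ and $\pi_H=(\widetilde\omega^H)^{-1}$, pins down the $H$-component as $z_1=\pi_HA_H^*u_1$ (up to the sign fixed by the conventions); the $u_2$-condition reads $A_Hz_1+(A_I-A_I^*)u_1=0$. Substituting the first into the second gives
\[
\bigl[(A_I-A_I^*)+A_H\pi_HA_H^*\bigr]u_1=0 .
\]
Therefore the radical of $\Omega$ is the graph over $\ker\bigl[(A_I-A_I^*)+A_H\pi_HA_H^*\bigr]$ of the map $u\mapsto z_1(u)$, and it attains the dimension $n-k$ exactly when this operator vanishes identically. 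By the reduction of the first paragraph this is the coisotropy condition, which is \eqref{eq:grapca}.

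The conceptual content is routine linear algebra, so the main obstacle is purely in the bookkeeping of the adjoints and in matching the sign of the term $A_H\pi_HA_H^*$: whether the $z_2$-equation produces $z_1=+\pi_HA_H^*u_1$ or $z_1=-\pi_HA_H^*u_1$ depends on the slot convention for $\omega_{0,k}$ and on the precise identification of $\widetilde\omega^H$ with its inverse $\pi_H$. I would fix these conventions at the outset so that the substitution reproduces the $+$ sign of \eqref{eq:grapca}; this is the step most prone to error, but it is entirely computational and carries no genuine difficulty.
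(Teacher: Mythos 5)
Your proposal is correct. For part (2) it is essentially the paper's own argument: the authors observe that coisotropy of $C_A$ is equivalent to $\omega|_{C_A}$ being maximally degenerate, which is exactly your chain $\omega^{k+1}|_{C_A}=0\iff\rank(\omega|_{C_A})\le 2k\iff\rank(\omega|_{C_A})=2k$, using the a priori bound $\rank(\omega|_{C_A})\ge 2k$ coming from $\dim C_A^\omega=n-k$. (The paper writes ``$\rank=k$'' where $2k$ is meant; your version states the rank correctly, and your remark that this part needs nothing about the special form of $A$ is accurate.) For part (1) the comparison is different in kind: the paper gives no argument at all and simply cites Proposition 2.2 of Oh--Park, so your computation of the radical of $\iota_A^*\omega$ is a genuine self-contained replacement for that citation. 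The computation itself is sound: the radical is cut out by the two conditions obtained by varying $z_2$ and $u_2$ separately, the first determines the $H$-component as $z_1=\pm\pi_H A_H^*u_1$ because $\widetilde\omega^H$ is invertible, and substitution into the second shows the radical is a graph over $\ker\bigl[(A_I-A_I^*)\pm A_H\pi_H A_H^*\bigr]$, so that maximal degeneracy forces that skew operator to vanish. The one point you rightly flag, the relative sign between the two terms, is the only place where care is needed; note that an overall sign is irrelevant since the conclusion is that the operator vanishes, so only the relative sign of $A_I-A_I^*$ against $A_H\pi_H A_H^*$ must be pinned down by the conventions for $\widetilde\omega^H$, $\pi_H$, and the adjoints. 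With that convention fixed as you propose, the argument is complete.
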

\begin{proof} The first assertion of Proposition \ref{prop:coisotropic} is  Proposition 2.2  in \cite{oh-park}.
The second assertion of Proposition \ref{prop:coisotropic}
follows from the observation that  $C_A$ is coisotropic, if and only  the
restriction $\omega|_{C_A}$ is maximally degenerate, i.e. $\rank (\omega|_{C_A} )= \rank \pi^*\omega_{0,k}$
$ = k$.
\end{proof}

\subsection{The equation for coisotropic sections}
\label{subsec:master}

Note that the projection $p_G: TY \to E$  induces  a bundle map $p_G ^* : E^* \to  T^*Y$ by
$$
\langle p_G^*( s ) , \delta q \rangle :  = \langle s , p_G (\delta q)\rangle
$$
for any  $s \in E^*$ and $\delta q \in T_{\pi (s)}Y$.

As before, assume that $m = \dim Y = \dim E^* - (n-k) = n +k$.

\begin{thm} \label{thm:coisotropic} The graph $\Gamma _s$ of a
section $s: Y \to (U\subset E^*, \omega_G, \pi^* b)$  is coisotropic if and only if the 2-form
$ \omega _G(s) : =\omega|_{ Y} - d ^ b (p_G^*( s))\in \Omega^2 (Y)$
is maximally degenerate, i.e.  $(\omega_ G(s) )^{k+1} =0$.
\end{thm}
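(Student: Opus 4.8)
The plan is to reduce the statement to a single computation of the pullback of $\omega_G$ along the section $s$, after which the equivalence drops out of the pointwise linear-algebra characterization of coisotropy already recorded in Proposition~\ref{prop:coisotropic}. The section $s\colon Y \to E^*$ is an embedding onto $\Gamma_s$, so $s\colon Y \to \Gamma_s$ is a diffeomorphism and $T_y s\colon T_yY \to T_{s(y)}\Gamma_s$ is an isomorphism at every point. Since $\omega_G$ is nondegenerate on $U$, coisotropy of $\Gamma_s$ is the pointwise condition that each $T_{s(y)}\Gamma_s$ be a coisotropic subspace of $(T_{s(y)}E^*, (\omega_G)_{s(y)})$; as $\dim \Gamma_s = \dim Y = n+k$ inside the $2n$-dimensional $E^*$, part (2) of Proposition~\ref{prop:coisotropic} applies fiberwise and says this holds exactly when $\omega_G^{k+1}$ restricts to zero on $\Gamma_s$. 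Pulling back this restriction along the diffeomorphism $s$ and using that $s^*$ is a homomorphism of exterior algebras, $\omega_G^{k+1}|_{\Gamma_s}=0$ is equivalent to $(s^*\omega_G)^{k+1}=0$ on $Y$. Hence the whole theorem reduces to identifying $s^*\omega_G$ with $\omega_G(s)=\omega|_Y - d^b(p_G^*(s))$.

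The central step is therefore the identity $s^*\omega_G = \omega|_Y - d^b(p_G^*(s))$, which I would prove by treating the three summands of \eqref{eq:omega*} separately. Because $s$ is a section, $\pi\circ s = \mathrm{id}_Y$, so immediately $s^*(\pi^*\omega)=\omega|_Y$ and $s^*(\pi^*b)=b$. The point requiring care is the identity $s^*\theta_G = p_G^*(s)$: for $\delta q \in T_yY$, the defining formula \eqref{eq:thetag} together with $T\pi\circ Ts = \mathrm{id}$ gives
$$
\langle s^*\theta_G, \delta q\rangle = \theta_{G,s(y)}(Ts\,\delta q) = s(y)\big(p_G\circ T\pi(Ts\,\delta q)\big) = s(y)(p_G\,\delta q) = \langle p_G^*(s), \delta q\rangle.
$$
Consequently $s^*(d\theta_G) = d\big(p_G^*(s)\big)$ and $s^*(\pi^*b\wedge\theta_G) = b\wedge p_G^*(s)$. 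Summing the three contributions and recalling $d^b\gamma = d\gamma + b\wedge\gamma$ yields
$$
s^*\omega_G = \omega|_Y - d\big(p_G^*(s)\big) - b\wedge p_G^*(s) = \omega|_Y - d^b\big(p_G^*(s)\big) = \omega_G(s),
$$
a genuine $2$-form on $Y$.

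Assembling the pieces, $(s^*\omega_G)^{k+1} = (\omega_G(s))^{k+1}$, so the chain of equivalences of the first paragraph shows that $\Gamma_s$ is coisotropic if and only if $(\omega_G(s))^{k+1}=0$, which is the asserted maximal-degeneracy condition. The main obstacle is modest and twofold: first, verifying $s^*\theta_G = p_G^*(s)$ cleanly, where one must keep track of $T\pi\circ Ts=\mathrm{id}$ and the defining pairing of $p_G^*$; and second, justifying that the subspace-level statement of Proposition~\ref{prop:coisotropic}(2) may be applied verbatim at each tangent space $T_{s(y)}\Gamma_s$, which rests only on the pointwise nondegeneracy of $\omega_G$ and on the fact that, for an $(n+k)$-dimensional subspace of a $2n$-dimensional symplectic space, coisotropy is equivalent to $\omega|_{\Gamma_s}$ attaining its minimal rank $2k$, i.e. to the vanishing of the $(k+1)$-st power.
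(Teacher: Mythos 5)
Your proposal is correct and follows essentially the same route as the paper's own proof: invoke Proposition~\ref{prop:coisotropic}(2) pointwise, pull back along the section to reduce to $(s^*\omega_G)^{k+1}=0$ on $Y$, and establish the key identity $s^*\theta_G = p_G^*(s)$ from \eqref{eq:thetag} and $T\pi\circ Ts=\mathrm{id}$. The only cosmetic difference is that you pull back the three summands of \eqref{eq:omega*} separately, while the paper groups $d\theta_G+\pi^*b\wedge\theta_G$ as $d^{\pi^*b}\theta_G$ and uses $s^*(d^{\pi^*b}\theta_G)=d^b(s^*\theta_G)$; the computation is the same.
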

\begin{proof} By Proposition \ref{prop:coisotropic}, $\Gamma_s$ is coisotropic
if and only if the restriction of $\omega_G$ to $\Gamma_s$ is maximally degenerate,  or equivalently
$(\omega_G) ^{k+1} _{|\Gamma_s} = 0$.  Since $\omega_G \circ s_* = s^*  (\omega_G)$  we get
\begin{equation}
(\omega_G)^{k+1} |_{\Gamma_s} =  0 \Longleftrightarrow (s^*(\omega_G)| _Y)^{k +1}  = 0.\label{eq:tildeomegag}
\end{equation}
By (\ref{eq:omega*}) we have
\begin{equation}
 s^* (\omega_G) | _Y = s^* ( \pi ^*(\omega|_Y) - d ^{\pi^* b}\theta_G)
 =  \omega|_Y -  s^* (d^{ \pi ^*b} \theta_G) = \omega|_Y -d ^b (s ^* \theta_G). \label{eq:somega}
 \end{equation}
Using (\ref{eq:thetag}) we obtain  for  any  $ y \in Y$ and any $\partial  q \in T_yY$
\begin{equation}
\langle s^*(\theta_G), \delta q\rangle_y =  \langle \theta_G,
s_* (\partial  q) \rangle = s (p_G \circ \pi_*\circ  s_* \delta q) = s(p_G(\delta q)).\label{eq:stheta}
\end{equation}
It follows  from (\ref{eq:somega}) and  (\ref{eq:stheta})
\begin{equation}
s^* (\omega_G) | _Y = (\omega|_Y) - d^b ( p_G^*(s)).\label{eq:somegag}
\end{equation}
Theorem \ref{thm:coisotropic}  follows immediately from  (\ref{eq:somegag}).
\end{proof}

\subsection{(Pre-)Hamiltonian equivalence and  infinitesimal equivalence}
\label{subsec:Ham}

In this section, we
clarify the relation between the {\it intrinsic} pre-Hamiltonian equivalence (resp. intrinsic l.c.p-s. equivalence)
between the l.c.p-s. structures $(\omega, b)$ and the {\it extrinsic}
Hamiltonian equivalence (resp. extrinsic l.c.s. equivalence) between coisotropic embeddings in $(U,\omega_G, \pi^* b)$.  The intrinsic pre-Hamiltonian
equivalence is provided by the pre-Hamiltonian diffeomorphisms  (Definition \ref{def:ham}) on
the l.c.p-s. manifold $(Y,\omega, b)$ and the extrinsic ones by
Hamiltonian deformations of its coisotropic embedding into
$(U,\omega_U, \pi^*b)$ (Definition \ref{def:lcse}). 
 The  intrinsic l.c.p-s. equivalence is provided by l.c.p-s. diffeomorphisms
and the extrinsic ones by l.c.s. deformations of its coisotropic embedding into
$(U,\omega_U, \pi^*b)$.  The infinitesimal ((pre-)Hamiltonian) equivalence is  a natural  infinitesimal version of the intrinsic/extrinsic  ((pre-)Hamiltonian)  equivalence.

First we  shall prove

\begin{lem}\label{lcpsvectorfield} A vector field $\xi$ on an l.c.p-s. manifold
$(Y, \omega, b)$ is  l.c.p-s. if and only if
\begin{equation}
d^b (\xi \rfloor \omega) = c\, \omega \, \text{ for  some } c\in \R. \label{eq:lcpsv}
\end{equation}
\end{lem}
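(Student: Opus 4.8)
The plan is to reduce both clauses of Definition \ref{vectorfield} to a single Cartan-formula identity and then to read off the constant $c$ from the interaction of the two defining conditions. The central computation is an identity expressing $\LL_\xi\omega$ through the $b$-twisted differential of $\xi\rfloor\omega$: applying Cartan's formula $\LL_\xi\omega = d(\xi\rfloor\omega) + \xi\rfloor d\omega$ and substituting $d\omega = -\,b\wedge\omega$ (which holds since $(Y,\omega,b)$ is l.c.p-s., i.e.\ $d^b\omega=0$ by \eqref{eq:dbomega=0}), together with the contraction rule $\xi\rfloor(b\wedge\omega) = b(\xi)\,\omega - b\wedge(\xi\rfloor\omega)$, gives
$$
\LL_\xi\omega = d^b(\xi\rfloor\omega) - b(\xi)\,\omega.
$$
Separately, since $b$ is closed, Cartan's formula also yields $\LL_\xi b = d(\xi\rfloor b) = d(b(\xi))$. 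These two identities are the whole engine of the proof.

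For the forward implication I would assume $\xi$ is l.c.p-s.\ with witness $u$, so $\LL_\xi\omega = -u\,\omega$ and $\LL_\xi b = du$. The first identity then rewrites $\LL_\xi\omega = -u\omega$ as $d^b(\xi\rfloor\omega) = (b(\xi)-u)\,\omega$, so it remains only to see that $b(\xi)-u$ is constant. Here the second condition enters: $\LL_\xi b = du$ combined with $\LL_\xi b = d(b(\xi))$ forces $d(b(\xi)-u)=0$, whence $b(\xi)-u$ is locally constant and, on a connected $Y$, equal to a genuine constant $c$. This is exactly \eqref{eq:lcpsv}.

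For the converse I would start from \eqref{eq:lcpsv}, $d^b(\xi\rfloor\omega)=c\,\omega$ with $c\in\R$, and simply set $u := b(\xi)-c$. The key identity immediately gives $\LL_\xi\omega = c\,\omega - b(\xi)\,\omega = -u\,\omega$, and since $c$ is constant we have $du = d(b(\xi)) = \LL_\xi b$. Thus both clauses of Definition \ref{vectorfield} hold and $\xi$ is l.c.p-s.

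The computation is short, so the only genuine subtleties are bookkeeping ones. The first is getting the sign right in $\xi\rfloor(b\wedge\omega)$; the second, and more conceptual, point is recognizing that the coefficient is forced to be a \emph{constant} rather than merely a function only because of the second defining condition $\LL_\xi b = du$ together with $db=0$ — the condition $\LL_\xi\omega=-u\omega$ by itself would only yield $d^b(\xi\rfloor\omega) = (\text{function})\,\omega$. I would also flag the mild connectedness assumption on $Y$ needed to promote "locally constant" to "constant."
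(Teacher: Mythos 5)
Your proof is correct and follows essentially the same route as the paper: the Cartan-formula identity $\LL_\xi\omega = d^b(\xi\rfloor\omega) - b(\xi)\,\omega$ combined with $\LL_\xi b = d(b(\xi))$ to force $d(b(\xi)-u)=0$, and the converse via $u := b(\xi)-c$. Your remark that connectedness of $Y$ is needed to upgrade ``locally constant'' to ``constant'' is a valid point that the paper's proof passes over silently.
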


\begin{proof} Assume that $\xi$ is l.c.p-s.  vectorfield on $Y$.
To prove (\ref{eq:lcpsv}) we note that  a l.c.p-s. vector field $\xi$
on a l.c.p-s. manifold $(Y, \omega, b)$ satisfies the following equation
for  some smooth function $u \in C^\infty (Y)$ (see Definition \ref{vectorfield})
\begin{eqnarray}
&{}& \LL _\xi \omega = - u \omega \nonumber\\
&\Longleftrightarrow & \xi \rfloor d\omega  + d( \xi \rfloor \omega) = - u \omega \nonumber\\
&\Longleftrightarrow & - b(\xi)\, \omega + d ^ b (\xi \rfloor \omega) = - u \omega\nonumber\\
& \Longleftrightarrow & d ^b (\xi \rfloor \omega) = (-u + b(\xi)) \, \omega. \label{eq:vectorlcs}
\end{eqnarray}

By Definition (\ref{vectorfield}) $\LL_\xi b = du$,  or equivalently
$$d(b(\xi) - u) = 0.$$
Comparing this with (\ref{eq:vectorlcs}) we  obtain (\ref{eq:lcpsv}) immediately.

Now assume that    a vector field $\xi$  on $Y$  satisfies (\ref{eq:lcpsv}).  Set
$$u: = b(\xi) - c.$$
Then (\ref{eq:vectorlcs}) holds. The  above computations yield
$$\LL_\xi \omega =  - u \omega.$$
Since $\LL_\xi b = d(b(\xi)) = du$, we conclude  that $\xi$ is  l.c.p-s.
This  completes the proof of  Lemma \ref{lcpsvectorfield}.
\end{proof}

We  define a $b$-deformed  Lie derivative as follows
\begin{equation}
\LL_\xi^b (\phi): = d ^b\circ i_\xi + i _\xi \circ d ^b. \label{eq:liederi}
\end{equation}

The following statements are direct consequences of Lemma \ref{lcpsvectorfield},
hence we omit  their  proof.

\begin{cor}\label{cor:lcps}
Let $(Y,\omega,b)$ be an l.c.p-s. manifold.
\begin{enumerate}
\item Assume that $[\omega] \not = 0 \in H^2 _b (Y, \R)$. Then any l.c.p-s.
vector field $\xi$ on $(Y, \omega, b)$ satisfies $ d^b (\xi \rfloor \omega) = 0$ (the constant $c$ in (\ref{eq:lcpsv}) is zero), equivalently, $\LL_\xi ^b (\omega) = 0$.
\item  Assume that $\omega = d ^b \theta$ for some $\theta \in \Omega ^1 (Y)$.
Then $\xi\rfloor \omega = c\theta  + \alpha$  for some $\alpha \in  \ker  d^b \cap  \Omega^1(Y)$. In this case
$\LL_\xi ^b \omega = c\, \omega$.
\end{enumerate}
\end{cor}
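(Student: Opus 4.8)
The plan is to reduce both items to the single twisted Cartan identity
\[
\LL_\xi^b(\omega) = d^b(\xi \rfloor \omega),
\]
valid for every vector field $\xi$, and then to read off the two conclusions from the cohomological hypotheses together with the characterization \eqref{eq:lcpsv} of Lemma \ref{lcpsvectorfield}. To establish the identity I would simply expand the definition \eqref{eq:liederi} of the $b$-deformed Lie derivative, $\LL_\xi^b(\omega) = d^b(i_\xi\omega) + i_\xi(d^b\omega)$, and invoke the defining relation \eqref{eq:dbomega=0} of an l.c.p-s. manifold, namely $d^b\omega = 0$, so that the second term drops out and $\LL_\xi^b(\omega) = d^b(\xi\rfloor\omega)$. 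This is the only computation in the argument, and it is immediate; in particular it shows at once that for an l.c.p-s. vector field $\xi$ the condition $d^b(\xi\rfloor\omega) = c\,\omega$ of \eqref{eq:lcpsv} is the same as $\LL_\xi^b(\omega) = c\,\omega$.

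For part (1) I would argue on the constant $c$ supplied by Lemma \ref{lcpsvectorfield}. If $\xi$ is l.c.p-s., then $c\,\omega = d^b(\xi\rfloor\omega)$ is $d^b$-exact, so were $c \neq 0$ we could write $\omega = \tfrac{1}{c}\,d^b(\xi\rfloor\omega)$ and conclude $[\omega] = 0$ in $H^2_b(Y,\R)$, against the hypothesis $[\omega] \neq 0$. Hence $c = 0$, which is exactly $d^b(\xi\rfloor\omega) = 0$, equivalently $\LL_\xi^b(\omega) = 0$ by the identity above.

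For part (2) the hypothesis $\omega = d^b\theta$ lets me rewrite the same relation as $d^b(\xi\rfloor\omega) = c\,\omega = d^b(c\theta)$, so that $\xi\rfloor\omega - c\theta$ lies in $\ker d^b$; setting $\alpha := \xi\rfloor\omega - c\theta$ gives the claimed decomposition $\xi\rfloor\omega = c\theta + \alpha$ with $\alpha \in \ker d^b \cap \Omega^1(Y)$. Applying $d^b$ again and using $d^b\alpha = 0$ yields $\LL_\xi^b(\omega) = d^b(\xi\rfloor\omega) = c\,d^b\theta = c\,\omega$, as asserted. I do not expect any genuine obstacle here: once the twisted Cartan identity $\LL_\xi^b\omega = d^b(\xi\rfloor\omega)$ is in hand, each item is a one-line consequence of the exactness, respectively non-exactness, of $\omega$ in Novikov cohomology, which is presumably why the authors leave the proof to the reader.
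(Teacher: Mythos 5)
Your proof is correct and is exactly the argument the paper intends: the authors omit the proof, stating only that both items are direct consequences of Lemma \ref{lcpsvectorfield}, and your route --- the twisted Cartan identity $\LL_\xi^b\omega = d^b(\xi\rfloor\omega)$ from \eqref{eq:liederi} and $d^b\omega=0$, then reading off $c=0$ from non-exactness of $[\omega]$ in part (1) and setting $\alpha = \xi\rfloor\omega - c\theta$ in part (2) --- is precisely that direct consequence, filled in.
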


Lemma \ref{lcpsvectorfield} motivates the following definition

\begin{defn}\label{def:ham} A vector field $\xi$ on an  l.c.p-s. manifold  $(Y, \omega, b)$
is called {\it pre-Hamiltonian}, if  $\xi \rfloor \omega = d^b f$ for some $f \in  C^\infty (Y)$.
A diffeomorphism $\phi$ is  called {\it pre-Hamiltonian}, if
it is generated by a time-dependent pre-Hamiltonian  vector field.
\end{defn}

\begin{rem}\label{rem:vectorfield} \begin{enumerate}
\item  If $Y$ is an l.c.s. manifold, our definition of a pre-Hamiltonian vector field coincides
with the definition of a Hamiltonian vector field given by Vaisman \cite[(2.3)]{vaisman:lcs}.
For $b =0$,  our definition of a pre-Hamiltonian vector  field agrees with the
definition in \cite[Definition 3.3]{oh-park}.

\item Clearly, any vector field $\xi$ on $Y$ tangent to $\FF$ is  pre-Hamiltonian  with the Hamiltonian $f = 0$.
 Using Lemma \ref{lcpsvectorfield}  we obtain again the second assertion of
 Proposition \ref{tr-sy-form}, noting that  the corresponding constant $c$ is zero.
 \end{enumerate}
\end{rem}

The following Theorem is an extension of  Theorem 8.1 in \cite{oh-park}.

\begin{thm}\label{extlcps}
Any l.c.p-s.  (resp. pre-Hamiltonian) vector
field $\xi$ on an l.c.p-s. manifold $(Y,\omega, b)$ can be
extended to an l.c.s. (resp. Hamiltonian)
vector field on $(U,\omega_G, \pi^*b)$.
\end{thm}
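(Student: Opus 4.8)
The plan is to reduce both assertions to the single criterion of Lemma \ref{lcpsvectorfield}, applied now on the l.c.s. thickening $(U,\omega_G,\pi^*b)$, and then to produce the extension by prescribing the contracted one-form $\widetilde\xi\rfloor\omega_G$ directly rather than guessing a vector field. Since $\omega_G$ is nondegenerate on $U$, every $\beta\in\Omega^1(U)$ determines a unique $\widetilde\xi$ with $\widetilde\xi\rfloor\omega_G=\beta$; and because Lemma \ref{lcpsvectorfield} applies verbatim to the nondegenerate case, $\widetilde\xi$ is l.c.s. exactly when $d^{\pi^*b}\beta=c\,\omega_G$ for some constant $c$, while it is Hamiltonian (Definition \ref{def:ham}) exactly when $\beta=d^{\pi^*b}F$ for some $F\in C^\infty(U)$. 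So the whole task becomes: build a one-form $\beta$ with the right $d^{\pi^*b}$ and with $\beta|_Y$ encoding $\xi$ along the zero section.

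First I would read off the constant from the hypothesis: by Lemma \ref{lcpsvectorfield} the l.c.p-s. field $\xi$ satisfies $d^b(\xi\rfloor\omega)=c\,\omega$, and in the pre-Hamiltonian case $\xi\rfloor\omega=d^bf$ forces $c=0$ (since $d^{\pi^*b}$, and likewise $d^b$, squares to zero as $b$ is closed). Writing the splitting \eqref{eq:splitting} as $\xi=\xi_G+\xi_E$ with $\xi_E=p_G(\xi)\in\Gamma(E)$, I introduce the fibrewise-linear function $h\in C^\infty(E^*)$ attached to the section $\xi_E$ under the duality $E=(E^*)^*$, namely $h(\widehat\alpha)=\widehat\alpha(\xi_E(\pi\widehat\alpha))$, and set
$$\beta:=\pi^*(\xi\rfloor\omega)-c\,\theta_G+d^{\pi^*b}h\in\Omega^1(U).$$
Rewriting \eqref{eq:omega*} as $\omega_G=\pi^*\omega-d^{\pi^*b}\theta_G$ and using that $\pi^*b$ is closed, so that $d^{\pi^*b}$ is a differential obeying $d^{\pi^*b}\pi^*=\pi^*d^b$, one computes $d^{\pi^*b}\beta=\pi^*\!\big(d^b(\xi\rfloor\omega)\big)-c\,d^{\pi^*b}\theta_G=c\,\pi^*\omega-c\,d^{\pi^*b}\theta_G=c\,\omega_G$. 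Hence $\widetilde\xi:=\omega_G^{-1}(\beta)$ is l.c.s. In the pre-Hamiltonian case $c=0$ and $\beta=d^{\pi^*b}(\pi^*f+h)$ is $d^{\pi^*b}$-exact, so $\widetilde\xi$ is Hamiltonian; both "l.c.s.\ vs.\ Hamiltonian" conclusions therefore follow at once, provided I can show $\widetilde\xi|_Y=\xi$.

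The hard part, and the step I expect to be the real obstacle, is precisely this restriction identity, which forces me to understand $\omega_G$ and $\beta$ along the zero section in the block decomposition $TE^*|_Y=G\oplus E\oplus E^*$ coming from \eqref{eq:splitting} and \eqref{eq:identify}. Using $\theta_G|_Y=0$ together with the differential of \eqref{eq:thetag} at the zero section, I would check that $\omega_G|_Y$ restricts to the nondegenerate form $\omega$ on $G$, vanishes on $E\times E$ and on $E^*\times E^*$, and pairs $E$ with the vertical $E^*$ by the duality pairing (all remaining blocks vanishing) — this is exactly the mechanism upgrading the degenerate $\omega$ to the nondegenerate $\omega_G$. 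On the other side, $\pi^*(\xi\rfloor\omega)|_Y$ equals $\omega(\xi_G,\cdot)$ on $G$ and vanishes on $E$ and on the vertical (as $E=\ker\omega$ and pullbacks annihilate vertical vectors), $\theta_G|_Y=0$, and $d^{\pi^*b}h|_Y=dh|_Y$ is the evaluation $\langle\,\cdot\,,\xi_E\rangle$ on the vertical $E^*$ and zero on $TY$. Solving $\omega_G(\widetilde\xi,\cdot)|_Y=\beta|_Y$ block by block then returns $G$-component $\xi_G$, $E$-component $\xi_E$, and vanishing vertical component, i.e. $\widetilde\xi|_Y=\xi$. The only genuine care is fixing the pairing normalizations and signs in the zero-section expression for $d\theta_G$; once those are pinned down the identification is immediate, and shrinking $U$ if necessary preserves nondegeneracy of $\omega_G$ throughout, completing the extension.
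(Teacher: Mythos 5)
Your proposal is correct, but it takes a genuinely different route from the paper. The paper also splits $\xi=\xi_G+\xi_E$ and extends $\xi_E$ via the same fibrewise-linear Hamiltonian $h(\widehat\alpha)=\langle\widehat\alpha,\xi_E\rangle$, but for $\xi_G$ it branches on the cohomological dichotomy of Corollary \ref{cor:lcps}: if $[\omega]\neq 0$ in $H^2_b(Y)$ then $d^b(\xi\rfloor\omega)=0$ and one extends the closed form $\xi_G\rfloor\omega$ by $\pi^*$; if $\omega=d^b\theta$ one writes $\xi\rfloor\omega=c\theta+\alpha$ and extends $\alpha$ by $\pi^*$, using the identification $H^2_b(Y)\cong H^2_{\pi^*b}(U)$. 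You instead prescribe the contracted one-form $\beta=\pi^*(\xi\rfloor\omega)-c\,\theta_G+d^{\pi^*b}h$ in a single formula valid in all cases: the identity $d^{\pi^*b}\theta_G=\pi^*\omega-\omega_G$ makes the correction term $-c\,\theta_G$ absorb the constant $c$ uniformly, so $d^{\pi^*b}\beta=c\,\omega_G$ without any case analysis and without invoking the homotopy invariance of $H^*_{\pi^*b}$. What you pay for this is the block verification of $\widetilde\xi|_Y=\xi$ in the decomposition $G\oplus E\oplus E^*$; but the paper pays an equivalent price when it checks $\xi_E(y)=(d^{\pi^*b}\widehat f)_{\#\omega_G}(y)$ from the coordinate expression \eqref{eq:omegaU}, and your block description of $\omega_G|_Y$ (namely $\omega$ on $G\times G$, the duality pairing on $E\times E^*$, zero elsewhere, which follows from $\theta_G|_Y=0$ and $d\theta_G|_{p=0}=dp_\beta\wedge f_\beta^*$) is exactly what that coordinate formula gives, so the signs do close up. A further small bonus of your argument is that it delivers the Hamiltonian case for free with the explicit primitive $\pi^*f+h$, rather than treating it as a parallel strand.
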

\begin{proof}
Let  $\xi$  be a  l.c.p-s.  (resp.  pre-Hamiltonian) vector field on $(Y, \omega, b)$.
We decompose
$$\xi = \xi_G + \xi _E$$
where  $\xi _G \in G$ and $\xi_E$ is tangent to $\FF$.
By Remark \ref{rem:vectorfield} (2), $\xi_E$ is  pre-Hamiltonian, hence it suffices to  show that
\begin{enumerate}
\item $\xi_E$  extends to a Hamiltonian vector field on  $(U, \omega_G, \pi^*b)$;
\item $\xi _G$ extends to a l.c.s. (resp.  Hamiltonian) vector field on  $(U, \omega_G, \pi ^*b)$.
\end{enumerate}
To prove (1),  we  define a Hamiltonian function $\widehat f$ on $(U\subset E^*, \omega_G, \pi^*b)$ as follows
$$ \widehat f (\widehat \alpha ) : = \langle  \widehat \alpha, \xi_E \rangle.$$
 It is straightforward to check that
\begin{equation}
 \widehat f |_Y =  0 = f, \text{ and } (d^b \widehat f)|_{Y} = (d\widehat f)| _Y .\label{eq:ext1}
 \end{equation}
Denote by $(d^{\pi ^* b} \widehat f)_{\#\omega_G}$  the  associated Hamiltonian vector field on $U$.
Using (\ref{eq:ext1}), (\ref{eq:fbeta*})  and the coordinate expression of  $\omega_G$  in (\ref{eq:omegaU}), we obtain  easily that
$$\xi_E(y) = (d^{\pi ^* b} \widehat f)_{\#\omega_G}(y)$$
for all $y \in Y$. This  proves (1).

Now  we shall show (2). Since $\omega_G| _\FF = 0$, we have
\begin{equation}
(\xi_G \rfloor \omega_G) (y) = (\xi_G \rfloor \omega ) (y)\label{eq:coincide}
\end{equation}
for all $y \in Y$.
Suppose $[\omega] = 0 \in H ^2 _b (Y)=  H^2_{\pi^* b} (U)$.
Then $\omega_G = d^{\pi^*b} \theta_U$ for some  1-form $\theta_U$ on $Y$. Since
$$ (d^{\pi^* b} \theta _U)|_Y = d ^b (\theta _U)|_Y, $$
the one-form
\begin{equation}
\theta := (\theta _U )| _Y\label{eq:coincide2}
\end{equation}
satisfies the condition in Corollary \ref{cor:lcps} (2).
Using Corollary \ref{cor:lcps} (2), formulas  (\ref{eq:coincide}), (\ref{eq:coincide2})
and the non-degeneracy of $\omega _U$,
we  observe that the extendability of $\xi_G$
is equivalent to the extendability of the one-form  $\alpha$  associated to $\xi_G$ as  in Corollary \ref{cor:lcps} (2)
 to a one-form $\alpha _U$ on $U$
satisfying the following condition:
\begin{equation}
d^{\pi^* b} \alpha _U = 0 \text { and } (\alpha _U)|_Y = \alpha.\label{eq:coincide3}
\end{equation}
Set $\alpha _U: = \pi^*(\alpha)$. Then $\alpha_U$ satisfies (\ref{eq:coincide3}).
This proves  Theorem \ref{extlcps}  for the case $[\omega] = 0 \in H^2 _b (Y)$.

Now assume that $[\omega] \not =  0 \in H ^2 _b (Y)=  H^2_{\pi^* b} (U)$.
By Corollary \ref{cor:lcps}(1)  $d^b (\xi \rfloor \omega) = 0$, or equivalently,
$\xi_G \rfloor \omega = \gamma$, where $d^b\gamma = 0$. Since $\omega_U$ is non-degenerate, using (\ref{eq:coincide}), we note that
the required extendability of $\xi_G$
is equivalent to the extendability  $\gamma$  to a one-form $ \gamma_U$ on $U$  such that $d^{ \pi^*b}\gamma_U = 0$.
Clearly $\gamma_U : = \pi^* (\gamma)$  satisfies the required condition.
This  proves (2) and  completes the proof of Theorem \ref{extlcps}.
\end{proof}

Now we study the geometry of the  master equation for coisotropic sections $s \in E^*$.
By Proposition \ref{prop:coisotropic}, the coisotropic condition for $s$ is given by
\begin{equation}
(\omega - d^b(p_G^* s)) ^{ k+1} = 0\in \Omega ^{2k +2} (Y).\label{eq:main}
\end{equation}

Abbreviate $p_G^* s$ as $s_G$  and $p_G^* (E^*)$ as $G^\circ$. Note that $G^\circ \subset T^*Y$ is  the annihilator of $G$.
We rewrite the master equation (\ref{eq:main}) in the following form
\begin{eqnarray}
s_G \in G^\circ, \nonumber \\
 (\omega - d^b s_G) ^{k+1} = 0.\label{eq:sg}
 \end{eqnarray}

Since $p_G^*|_{E^*} : E^* \to G^ \circ$ is a  bundle isomorphism,
$Y$ is  also  a coisotropic  submanifold in  $(p_G^* (U) \subset G^\circ, (p^*_G|_{E^*}^{-1})^* (\omega_G))$.
Abusing the notation, we  abbreviate $(p^*_G|_{E^*}^{-1})^*(\omega_G)$ as  $\omega _G$  and $p_G^* (U)$ as $U$.   Clearly,
the linearized equation of (\ref{eq:sg}) at the zero section is
\begin{equation}
\omega ^k \wedge d^b \alpha = 0.\label{eq:sglin}
\end{equation}
Since $\omega ^k| _ G \not = 0$ and $\omega ^{k+1} = 0$, the
linearized equation of (\ref{eq:main}) is equivalent to the following  equation
\begin{equation}
 d^{\bar b}_\FF \alpha = 0 \label{eq:mainlin}
\end{equation}
for a section $\alpha \in E^*$. Here $\bar b$ denote the restriction of $b$ to $\FF$.

\begin{defn}\label{def:lcse}  Two sections $s_0, s_1: Y \to U\subset  G^\circ$ are called
{\it Hamiltonian equivalent} (resp. l.c.s. {\it equivalent}), if
there exists  a   family of Hamiltonian   diffeomorphisms (resp. l.c.s. diffeomorphisms)
$\psi _t$ of $(U, \omega_G)$  and
a family of diffeomorphisms $g_t \in Diff (Y)$, $t \in [0,1]$,  such that $g_0 = Id | _{Y}$,
$\psi_0= Id|_{U} $  and
$s_1 = \psi_1 \circ s_0\circ g_1$.

Two sections  $\xi_0, \x_1: Y \to U$  are called {\it  infinitesimally Hamiltonian equivalent}
(resp. {\it infinitesimally l.c.s. equivalent}),
if  $\xi_0 - \xi_1$ is the vertical (fiber) component of  a Hamiltonian
(resp. l.c.s.) vector  field  on $(U, \omega_G)$.
\end{defn}

Clearly, if $s_0$  and $s_1$ are  (Hamiltonian)  equivalent sections, and $s_0$  is
a coisotropic section, then $s_1$ is also a coisotropic section.

\begin{lem}\label{ham=coh} Two solutions  of the linearized equation (\ref{eq:mainlin})
are infinitesimally Hamiltonian equivalent if and only if
they  are  cohomologous  as elements in $\Omega^1_b (Y, \omega)$.  Consequently,
the set of equivalence classes of the infinitesimally Hamiltonian equivalent solutions of the linearized
equations   is $H^1_b (Y, \omega)$.
\end{lem}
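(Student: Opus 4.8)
The plan is to reduce the whole statement to a single local computation that identifies the vertical component, along $Y$, of a Hamiltonian vector field on the thickening with a leafwise $b$-twisted exact section. Throughout I work in the canonical l.c.s. thickening $(U,\omega_G,\pi^*b)$ of Section \ref{sec:neighborhoods} and in foliation coordinates $(y^1,\dots,y^{2k},q^1,\dots,q^{n-k})$ adapted to the splitting (\ref{eq:splitting}), so that $E=\ker\omega=\operatorname{span}\{\partial/\partial q^\alpha\}$, the transverse form is $\pi^*\omega=\frac{1}{2}\sum_{i,j}\omega_{ij}\,dy^i\wedge dy^j$ as in (\ref{eq:piomegaY}), and $\theta_G=\sum_\alpha p_\alpha\,dq^\alpha$ with $(p_\alpha)$ the fibre coordinates on $E^*$ dual to $\partial/\partial q^\alpha$. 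Since $\theta_G$ vanishes on the zero section $Y=\{p=0\}$, the term $\pi^*b\wedge\theta_G$ of (\ref{eq:omega*}) drops out there and
$$
\omega_G|_Y=\frac{1}{2}\sum_{i,j}\omega_{ij}\,dy^i\wedge dy^j-\sum_\alpha dp_\alpha\wedge dq^\alpha,
$$
with $(\omega_{ij})$ invertible.

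Next I compute the vertical part of a Hamiltonian field. For $\widehat f\in C^\infty(U)$ let $X_{\widehat f}$ be defined by $X_{\widehat f}\rfloor\omega_G=d^{\pi^*b}\widehat f$, and write $X_{\widehat f}=\sum_i X^i\partial/\partial y^i+\sum_\alpha X^\alpha\partial/\partial q^\alpha+\sum_\alpha X_\alpha\partial/\partial p_\alpha$. Because $\pi^*b$ has no $dp$-component, along $Y$ the one-form $d^{\pi^*b}\widehat f$ has $dq^\alpha$-coefficient equal to $\partial f/\partial q^\alpha+b_\alpha f$ with $f:=\widehat f|_Y$, while its $dp_\alpha$-coefficients are the fibre derivatives $(\partial\widehat f/\partial p_\alpha)|_Y$. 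Matching $dq^\alpha$-coefficients in $X_{\widehat f}\rfloor\omega_G|_Y=d^{\pi^*b}\widehat f|_Y$ against the display above yields
$$
X_\alpha=-\Big(\frac{\partial f}{\partial q^\alpha}+b_\alpha f\Big).
$$
Under the identification $TE^*|_Y\cong TY\oplus E^*$ of (\ref{eq:identify}), the vertical vector $\sum_\alpha X_\alpha\,\partial/\partial p_\alpha$ corresponds to the section $-\sum_\alpha(\partial f/\partial q^\alpha+b_\alpha f)\,dq^\alpha=-d^{\bar b}_\FF f\in\Gamma(E^*)$, where $\bar b$ is the restriction of $b$ to $\FF$. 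This identity, that the vertical component of $X_{\widehat f}$ along $Y$ is $-d^{\bar b}_\FF(\widehat f|_Y)$, is the crux of the argument.

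The remainder is formal. As $\widehat f$ ranges over $C^\infty(U)$, its boundary value $\widehat f|_Y$ ranges over all of $C^\infty(Y)$ (taking $\widehat f=\pi^*f$ already realizes any $f$, the free fibre derivatives affecting only the irrelevant horizontal coefficients $X^i,X^\alpha$), so the set of vertical components along $Y$ of Hamiltonian vector fields on $(U,\omega_G)$ is exactly $\operatorname{im}d^{\bar b}_\FF=\{d^{\bar b}_\FF f:f\in C^\infty(Y)\}$ (the overall sign being immaterial). By Definition \ref{def:lcse}, two solutions $\xi_0,\xi_1$ of the linearized equation (\ref{eq:mainlin}) are infinitesimally Hamiltonian equivalent iff $\xi_0-\xi_1$ lies in this image, i.e. iff $\xi_0-\xi_1=d^{\bar b}_\FF f$, which is precisely the statement that they are cohomologous in $\Omega^1_b(Y,\omega)$. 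Finally, the solution space of (\ref{eq:mainlin}) is $\ker d^{\bar b}_\FF$ in leafwise degree one, and the closedness of $b$ gives $(d^{\bar b}_\FF)^2=0$ (so $\operatorname{im}d^{\bar b}_\FF\subset\ker d^{\bar b}_\FF$); hence the set of equivalence classes is $\ker d^{\bar b}_\FF/\operatorname{im}d^{\bar b}_\FF=H^1_b(Y,\omega)$.

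I expect the main obstacle to be the coefficient-matching computation along $Y$: one must track carefully how the $b$-twist distributes between the leafwise ($dq^\alpha$) and transverse ($dy^i$) directions, and confirm that only the leafwise part $\bar b$ survives in the vertical component, the transverse part $b_j$ being absorbed into the horizontal coefficients $X^i$ through the invertibility of $(\omega_{ij})$. A secondary point to verify is that $\widehat f$ may be chosen with prescribed $\widehat f|_Y$ (for instance $\widehat f=\pi^*f$) so that every class of $\operatorname{im}d^{\bar b}_\FF$ is genuinely realized, and that $(d^{\bar b}_\FF)^2=0$ indeed follows from $db=0$, which legitimizes passing to the cohomology $H^1_b(Y,\omega)$.
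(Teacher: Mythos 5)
Your proof is correct and takes essentially the same route as the paper's: the identity you derive by coefficient matching --- that the vertical component along $Y$ of the Hamiltonian vector field of $\widehat f$ is $\pm d^{\bar b}_\FF(\widehat f|_Y)$ --- is precisely the paper's formula (\ref{eq:vertical}), obtained there from (\ref{eq:fbeta*}) and (\ref{eq:omegaU}), after which both arguments conclude identically. One small caution: $\theta_G=p_\beta(dq^\beta-R_i^\beta dy^i)$ in general, so the matching should be read against the frame $f_\beta^*=dq^\beta-R_i^\beta dy^i$ identifying $E^*$ with $G^\circ$; since the $R$-terms carry a factor of $p$ they vanish along the zero section and your conclusion is unaffected.
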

\begin{proof} It suffices to prove Lemma \ref{ham=coh} for the case where one of
the two solutions is the zero section.
For $f \in C^\infty (U)$ denote by $(d^{\pi^*b} f)_{\#\omega_G}$  the associated
Hamiltonian  vector field on $U$.  We identify $Y$ with the zero section of $G^\circ\supset U$
and for $ y \in Y$ we denote by $T_y ^{ver}  G^\circ$  the  vertical  (fiber) component
of $T_y  G^\circ = T_y Y \oplus G^\circ_y$.  For any $V \in T_y  G^\circ$  denote by $V^{ver}$
the vertical component of $V$.
Now assume that  a section $\xi: Y \to U\subset G^\circ$   is infinitesimally  Hamiltonian equivalent
to the  zero section, i.e.  there  exists $f \in  C^\infty (U)$ such  that
$\xi = (d ^{\pi ^* b} f)_{\#\omega_G} ^{ver}$. 
Abusing notation we denote by $\pi$ the projection $G^\circ \to Y$.  Using (\ref{eq:fbeta*}) and (\ref{eq:omegaU}), we obtain  for all $y \in Y$
\begin{equation}
(d ^{\pi ^* b} f)_{\#\omega_G}^{ver}(y) = (d ^{\pi ^* b} f |_{ \pi^{-1}(\FF)}) _{f^*_\beta \wedge dp_\beta}^{ver}(y)
=  d^{\bar b}_\FF f(y),\label{eq:vertical}
\end{equation}
where $(d ^{\pi ^* b} f |_{ \pi^{-1}(\FF)}) _{f^*_\beta \wedge dp_\beta}^{ver}(y)$ denotes  the vertical  (fiber) component of   the  vector in  $T_y(\pi^{-1} (\FF)) = E(y) \oplus E^* (y)$ that is dual to the one-form
$d ^{\pi ^* b} f |_{ \pi^{-1}(\FF)}(y)$  with respect to the non-degenerate two-form  $\sum _\beta f^*_\beta \wedge dp_\beta (y)\in \Lambda ^2 T_y ^* (\pi^{-1} (\FF))$.
Hence $[\xi ] = 0 \in H^1_b (Y, \omega)$.

Now assume that $\xi = d^{\bar b}_{\FF} f$ where $f \in \Omega ^ 0 (U)$. By (\ref{eq:vertical})
$\xi$ is infinitesimally Hamiltonian equivalent
to the zero section. This   proves the first assertion of Lemma \ref{ham=coh}. The second
assertion is an immediate    consequence of the first one and (\ref{eq:mainlin}). This
completes the  proof  of Lemma \ref{ham=coh}.
\end{proof}

Next, let us  consider  the case where $\xi$ is infinitesimally l.c.s. equivalent
to the zero section, i.e. there is a l.c.s. vector field $\widehat \xi$ on $U$ such that
$\xi(y)$ is the vertical (fiber) component of $\widehat \xi(y)$  for all $y\in Y$. 
\begin{enumerate}
\item The case with $[\omega_G] \not = 0 \in  H^2 _{\pi^* b} (U, \R)= H^2 _b (Y, \R)$:
Corollary \ref{cor:lcps} (1) implies that   $d^{\bar b}(\widehat \xi \rfloor \omega_G) = 0$. 
The same argument as in the proof of  Lemma \ref{ham=coh}   yields that  for all $y \in Y$
$$\xi(y) = (\widehat \xi \rfloor \omega_G) |_\FF (y)\in  E^*(y).$$
This leads to  specify a subgroup $H^1_{\bar b, ext} (\FF)$ of the group
$H^1 _{\bar b} (\FF)$  whose elements are the restriction of $d^{\pi ^*b}$-closed one-forms  on $U$.
It is easy to see that
$$H^1_{\bar b, ext} (\FF) = i^* (H^1 _b (Y)),$$
where $i : \FF \to  Y$ is the  natural inclusion.

\item The case with $\omega_G = d^b \theta_U$  for some $\theta_U  \in \Omega ^1 (U)$:
Clearly $d ^b _\FF (\theta_U|_\FF) = 0$. Corollary \ref{cor:lcps} (2) implies that
 $(\widehat \xi \rfloor \omega_G)|_{\FF} = c\theta_U |_{\FF} + \alpha |_{\FF}$,
where $\alpha \in \Omega ^1 (U)$ and $d^{\pi^*b}\alpha  = 0$. Using the   argument in the proof of Lemma \ref{ham=coh} we get
$$ \xi(y) = c\theta_U |_{\FF}(y) + \alpha |_{\FF}(y) \in E^*(y).$$
\end{enumerate}

The discussion above yields immediately

\begin{lem}\label{lem:equi} Denote $H^1_b(Y,\omega) = H^1_{\bar b} (\FF)$.
The set of the  infinitesimal l.c.s.  equivalence  classes  of the solutions
$\xi$ of the linearized equation (\ref{eq:mainlin}) has one-one correspondence with
\begin{enumerate}
\item
$
H^1 _b(Y,\omega)/ i^* (H^1 _b (Y))
$
if $[\omega] \not = 0$ in $H^2 _b(Y)$ and
\item
$
H^1 _b(Y,\omega)/(i^* (H^1 _b (Y)) + \langle  \theta |_\FF \rangle _ {\otimes \R})
$
if $\omega = d^b \theta$.
\end{enumerate}

\end{lem}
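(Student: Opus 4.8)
The plan is to read the statement as a refinement of Lemma \ref{ham=coh} and to deduce it from the two-case discussion immediately preceding it. Since every Hamiltonian vector field on $(U,\omega_G)$ is in particular l.c.s., infinitesimal l.c.s. equivalence is coarser than infinitesimal Hamiltonian equivalence, so the set of l.c.s. equivalence classes of solutions of (\ref{eq:mainlin}) is a quotient of the set of Hamiltonian equivalence classes. By Lemma \ref{ham=coh} the latter is precisely $H^1_b(Y,\omega)=H^1_{\bar b}(\FF)$. Hence it suffices to pin down the subspace $S\subset H^1_b(Y,\omega)$ by which one must additionally quotient, namely the image in $H^1_{\bar b}(\FF)$ of the vertical components along $Y$ of all l.c.s. vector fields.

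First I would note that $\xi$ is infinitesimally l.c.s. equivalent to the zero section exactly when $\xi(y)$ is the vertical component of some l.c.s. field $\widehat\xi$, and, by the same computation (\ref{eq:vertical}) used in the proof of Lemma \ref{ham=coh}, this vertical component equals $(\widehat\xi\rfloor\omega_G)|_\FF(y)\in E^*(y)$. Thus $S$ is the set of classes of $(\widehat\xi\rfloor\omega_G)|_\FF$ as $\widehat\xi$ ranges over l.c.s. fields. Next I would invoke Corollary \ref{cor:lcps}: when $[\omega]\neq 0\in H^2_b(Y)$, part (1) forces $d^{\bar b}(\widehat\xi\rfloor\omega_G)=0$, so $(\widehat\xi\rfloor\omega_G)|_\FF$ ranges exactly over the subgroup $H^1_{\bar b,ext}(\FF)=i^*(H^1_b(Y))$ of restrictions of $d^{\pi^*b}$-closed one-forms, giving $S=i^*(H^1_b(Y))$; when $\omega=d^b\theta$, part (2) gives $(\widehat\xi\rfloor\omega_G)|_\FF=c\,\theta_U|_\FF+\alpha|_\FF$ with $d^{\pi^*b}\alpha=0$, so $S$ picks up the extra line $\langle\theta|_\FF\rangle_{\otimes\R}$ and $S=i^*(H^1_b(Y))+\langle\theta|_\FF\rangle_{\otimes\R}$. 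In both cases the asserted bijection is then $H^1_b(Y,\omega)/S$.

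The step needing the most care---and the main obstacle---is checking that this really is a clean quotient of vector spaces: one must verify that the extra generators are themselves $d^{\bar b}_\FF$-closed (so that they are genuine solutions of (\ref{eq:mainlin}) and define classes in $H^1_{\bar b}(\FF)$), that $i^*$ indeed lands in $H^1_{\bar b}(\FF)$, and that two l.c.s. fields differing by a Hamiltonian field yield the same class, so that $S$ is well defined and additive. All of these are furnished by the preceding coordinate computations together with Corollary \ref{cor:lcps}, after which the correspondence $H^1_b(Y,\omega)/S$ follows at once.
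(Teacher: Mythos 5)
Your proposal is correct and follows essentially the same route as the paper: the paper's ``proof'' is precisely the two-case discussion preceding the lemma, which identifies the vertical component of an l.c.s. field $\widehat\xi$ with $(\widehat\xi\rfloor\omega_G)|_\FF$ via the computation \eqref{eq:vertical} from Lemma \ref{ham=coh} and then applies Corollary \ref{cor:lcps} to pin down the extra subspace ($i^*(H^1_b(Y))$, plus the line $\langle\theta|_\FF\rangle_{\otimes\R}$ in the exact case). Your reformulation as a further quotient of the Hamiltonian-equivalence classes by a subspace $S$, together with the explicit checklist of well-definedness points, is a slightly more organized presentation of the same argument.
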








\section{Geometry of the l.c.s. thickening of a  l.c.p-s. manifold}
\label{sec:connections}

In this section, 
imitating the scheme performed for the pre-symplectic manifolds in \cite{oh-park}, we introduce  special coordinates
in the  l.c.s. thickening $(U,\omega_U,\pi^*b)$  of a  l.c.p-s. manifold and we compute
 important geometric structures in these coordinates ((\ref{eq:thetaG}), (\ref{eq:omegaU}), (\ref{eq:Tpi-1FFomegaU})),
 preparing  for  our study of  deformations of compact coisotropic submanifolds in l.c.s. manifolds in the next two sections. 

Again we start with a splitting
$$
TY = G \oplus E,
$$
the associated bundle projection $\Pi: TY \to TY$,  the associated
canonical one form $\theta_G$, and the l.c.s. form
\be\label{eq:omegaE*}
\omega_{U} = \pi^*\omega - d^{\pi^*b} \theta_G
\ee
on $U \subset E^*$. Let
$$
(y^1, \cdots, y^{2k}, q^1, \cdots, q^{n-k})
$$
be coordinates on $Y$ adapted to the null foliation on an open
subset $V \subset Y$. By choosing the frame
$$
\{f_1^*, \cdots, f_{n-k}^*\}
$$
of $E^*$ that is dual to the frame $\{\frac{\del}{\del q^1}, \cdots,
\frac{\del}{\del q^{n-k}}\}$ of $E$,
we introduce the {\it canonical coordinates} on $E^*$ by writing
an element $\widehat\alpha \in E^*$ as a linear combination of $\{f_1^*,
\cdots, f_{n-k}^*\}$
$$
\widehat\alpha = p_\beta f_\beta^*,
$$
and taking
$$
(y^1, \cdots, y^{2k}, q^1, \cdots, q^{n-k}, p_1, \cdots, p_{n-k})
$$
as the associated coordinates.

For a given splitting $\Pi: TY = G\oplus T\FF$,  there exists the
unique splitting of $TU$
\begin{equation}\label{eq:TUtildeG}
TU = G^\sharp \oplus T\pi^{-1}(\FF)
\end{equation}
that satisfies
\begin{equation}\label{eq:tildeG}
G^\sharp = (T_{\widehat \alpha}\pi^{-1}(\FF))^{\omega_U}
\end{equation}
for any $\widehat \alpha \in U$, which is invariant under the action of
l.c.s. diffeomorphisms on $(U, \omega_U, \pi^* b)$ that preserve the
leaves of $\pi^{-1}(\FF)$.

\begin{defn}\label{transymp}
We call the above unique splitting the {\it leafwise l.c.s.
connection of $U \to Y$ compatible to the splitting $\Pi: TY = G
\oplus T\FF$} or simply a {\it leafwise l.c.s.
$\Pi$-connection} of $U \to Y$.
\end{defn}
We would like to emphasize that this connection is not
a vector bundle connection of $E^*$ although $U$ is a subset of
$E^*$, which reflects {\it nonlinearity} of this connection.
We refer the reader to subsection \ref{subsec:splittings} for more  detailed explanation.

Note that the splitting $\Pi$ naturally induces the splitting
$$
\Pi_*: T^*Y = (T\FF)^\circ \oplus G^\circ.
$$
For the given splitting  $TY= G \oplus E$ we can write, as in \cite[(4.5)]{oh-park},
$$
G_x = \mathrm{span} \left\{{\partial \over \partial y^ i } +
\sum _{ \alpha =1} ^{m -l} R^\alpha _i {\partial \over \partial q ^\alpha}\right\}_{1\le i \le l}
$$
for   some $R^\alpha _i $s, which  are uniquely determined by the splitting and the given coordinates.

To derive the coordinate expression
of $\theta_G$, we compute
\begin{eqnarray*}
\theta_G\Big(\frac{\del}{\del y^i}\Big) & = & \widehat\alpha\Big(p_G\circ
T\pi(\frac{\del}{\del y^i})\Big)
= \widehat\alpha\Big(p_G(\frac{\del}{\del y^i})\Big) \\
& = & p_\beta f_\beta^*\Big(-R_i^\alpha \frac{\del}{\del q^\alpha}\Big) = -
p_\alpha
R_i^\alpha, \\
\theta_G\Big(\frac{\del}{\del q^\beta}\Big)  & = & p_\beta, \qquad
\theta_G\Big(\frac{\del}{\del p_\beta}\Big) = 0.
\end{eqnarray*}
Hence we derive
\begin{equation}\label{eq:thetaG}
\theta_G = p_\beta(dq^\beta - R_i^\beta dy^i).
\end{equation}
Here we note that
$$
(dq^\beta - R^\beta_idy^i)|_{G_x} \equiv 0.
$$
This shows that if we identify $E^* = T^*\FF$ with $G^\circ$,
then we may
write the dual frame on $T^*\FF$ as
\begin{equation}\label{eq:fbeta*}
f_\beta^* = dq^\beta - R^\beta_idy^i.
\end{equation}
Motivated by this, we write
\bea\label{eq:dthetaG}
d\theta_G & = & dp_\beta \wedge (dq^\beta - R_i^\beta dy^i)
- p_\beta dR_i^\beta \wedge dy^i
\nonumber \\
& = & dp_\beta \wedge (dq^\beta - R_i^\beta dy^i) - (dq^\gamma - R^\gamma_jdy^j) \wedge
p_\beta\frac{\del R^\beta_i}{\del q^\gamma} dy^i\nonumber \\
&{}& \quad - p_\beta \left(\frac{\del R_i^\beta}{\del y^j}
- R^\gamma_j \frac{\del R_i^\beta}{\del q^\gamma}\right) \, dy^j \wedge dy^i
\eea
and
\be\label{eq:bthetaG}
\pi^*b \wedge \theta_G = (b_\gamma dq^\gamma + b_i dy^i) \wedge p_\delta(dq^\delta - R_j^\delta dy^j).
\ee
Combining \eqref{eq:dthetaG}, \eqref{eq:bthetaG} and \eqref{eq:piomegaY}, we derive
\bea\label{eq:omegaU}
\omega_U
& = & \frac{1}{2}\Big(\omega_{ij} - p_\beta F^\beta_{ij} \Big)
dy^i \wedge dy^j \nonumber \\
& \quad &  - \left(dp_\nu + p_\nu(b_\gamma dq^\gamma +b_idy^i) + p_\beta\left(\frac{\del R_i^\beta}{\del
q^\nu}\right) dy^i\right) \wedge (dq^\nu- R_j^\nu dy^j)\nonumber\\
& = & \frac{1}{2}\Big(\omega_{ij} - p_\beta F^\beta_{ij} \Big)
dy^i \wedge dy^j \nonumber \\
& \quad &  - \left(dp_\nu + p_\nu b_\gamma \left(dq^\gamma - R^\gamma_idy^i\right)
+\left(p_\nu(b_i + b_\gamma R^\gamma_i)+ p_\beta \frac{\del R_i^\beta}{\del
q^\nu}\right) dy^i\right)\nonumber \\
&{}& \quad \wedge (dq^\nu- R_j^\nu dy^j)
\eea
similarly as in the derivation of  \cite[(6.8)]{oh-park},
where $F^\beta_{ij}$ are the components of the transverse
$\Pi$-curvature of the null-foliation given by \eqref{eq:Fbetaij} in the Appendix.

Note that we have
$$
T\pi^{-1}(\FF) = \operatorname{span} \Big\{\frac{\del}{\del q^1},
\cdots, \frac{\del}{\del q^{n-k}},  \frac{\del}{\del p_1}, \cdots,
\frac{\del}{\del p_{n-k}} \Big\}
$$
which is independent of the choice of the above induced foliation
coordinates of $TU$.

Now we compute $G^\sharp =(T\pi^{-1}(\FF))^{\omega_U}$ in $TU$ in terms of
these induced foliation coordinates. We will determine when the
expression
$$
a^j(\frac{\del}{\del y^j} + R_j^\alpha\frac{\del}{\del q^\alpha})
+ d^\beta \frac{\del}{\del q^\beta} + c_\gamma \frac{\del}{\del
p_\gamma}
$$
satisfies
$$
\omega_U\Big( a^j(\frac{\del}{\del y^j} +
R_j^\alpha\frac{\del}{\del q^\alpha}) + d^\beta \frac{\del}{\del
q^\beta}+ c_\gamma \frac{\del}{\del p_\gamma},
T\pi^{-1}(\FF)\Big)= 0.
$$
It is immediate to see by
pairing with $\frac{\del}{\del p_\mu}$
\begin{equation}\label{eq:bbeta}
d^\beta = 0, \quad \beta = 1, \cdots, n-k.
\end{equation}
Next we study the equation
$$
0 = \omega_U\Big (a^j(\frac{\del}{\del y^j} +
R_j^\alpha\frac{\del}{\del q^\alpha}) + c_\gamma \frac{\del}{\del
p_\gamma}, \frac{\del}{\del q^\nu}\Big)
$$
for all $\nu = 1, \cdots, n-k$. A straightforward check provides
\begin{equation}\label{eq:cnu}
a^j\left(p_\nu(b_i + b_\gamma R^\gamma_i)+ p_\beta \frac{\del R_i^\beta}{\del
q^\nu}\right) + c_\nu = 0
\end{equation}
for all $\nu$ and $j$. Combining (\ref{eq:bbeta}) and
(\ref{eq:cnu}), we have obtained
\begin{equation}\label{eq:Tpi-1FFomegaU}
(T\pi^{-1}(\FF))^{\omega_U} = \operatorname{span} \left\{
\frac{\del}{\del y^j} + R_j^\alpha \frac{\del}{\del q^\alpha} -
\left(p_\nu(b_i + b_\gamma R^\gamma_i)+ p_\beta \frac{\del R_i^\beta}{\del
q^\nu}\right) \frac{\del}{\del p_\nu}
\right\} _{1 \leq j \leq 2k}.
\end{equation}

\begin{rem} Just as we have been considering $\Pi: TY = G \oplus T\FF$ as a
``connection'' over the leaf space, we may consider the splitting
$\Pi^\sharp: TU = G^\sharp \oplus T(\pi^{-1}\FF)$ as the leaf
space connection {\it canonically induced from $\Pi$ under the
fiber-preserving map}
$$
\pi: U \to Y
$$
over the same leaf space $Y/\sim$: Note that the space of leaves
of $\FF$ and $\pi^{-1}\FF$ are canonically homeomorphic.
\end{rem}

\section{Master equation in coordinates}
\label{sec:master}

We will  derive the first equation for  the graph of a section $s: Y
\to E^*\cong NY$ to be coisotropic with respect to $\omega_U$ (Theorem \ref{thm:2}), which is a natural extension of a similar equation
in the  symplectic setting obtained in \cite{oh-park}. We
call the corresponding equation the {\it classical part} of the
master equation for the deformation theory of coisotropic submanifolds.

Recall that an Ehresmann connection of $U \to Y$ with a structure
group $H$ is a splitting of the exact sequence
$$
0 \to VTU \longrightarrow TU \stackrel{T\pi} \longrightarrow TY
\to 0
$$
that is invariant under the action of the group $H$. Here $H$ is
not necessarily a finite dimensional Lie group. In other words, an
Ehresmann connection is a choice of decomposition
$$
TU = HTU \oplus VTU
$$
that is invariant under the fiberwise action of $H$. Recalling
that there is a canonical identification $V_{\widehat\alpha} TU \cong
V_{\widehat\alpha }TE^* \cong E^*_{\pi(  \widehat\alpha  )}$, a connection can be
described as a {\it horizontal} lifting $HT_{\widehat \alpha }U$ of $TY$ to
$TU$ at each point $y \in Y$ and $\widehat\alpha \in U \subset E^*$ with
$\pi(\widehat\alpha) = y$.  We denote by $F^\# \subset HTU$ the horizontal
lifting of a subbundle $F \subset TY$ in general.

Let $(y^1,\cdots, y^{2k},q^1,\cdots, q^{n-k})$ be  foliation
coordinates of $\FF$ on $Y$ and
$$
(y^1,\cdots, y^{2k},q^1,\cdots, q^{n-k}, p_1, \cdots, p_{n-k})
$$
be the induced foliation coordinates of $\pi^{-1}(\FF)$ on $U$.
Then $G^\# = (T\pi^{-1}(\FF))^{\omega_U}$ has the natural basis
given by
\be\label{eq:ej}
e_j =
\frac{\del}{\del y^j} + R_j^\alpha \frac{\del}{\del q^\alpha} -
\left(p_\nu(b_i + b_\gamma R^\gamma_i)+ p_\beta \frac{\del R_i^\beta}{\del
q^\nu}\right)
\frac{\del}{\del p_\nu}
\ee
which are basic vector fields of $T(\pi^{-1}\FF)$. We also
denote
$$
f_\alpha = \frac{\del}{\del q^\alpha}.
$$
We define a local lifting of $E$
\begin{equation}\label{eq:Esharp}
E^\sharp = \operatorname{span} \Big\{f_1, \cdots, f_{n-k}\Big\}.
\end{equation}
The lifting \eqref{eq:Esharp} of $E$ provides a local splitting
$$
TU = (G^\sharp \oplus E^\sharp) \oplus VTU \to TY
$$
and defines a locally defined Ehresmann connection  on  the bundle $ U \to Y $.
From the
expression (\ref{eq:omegaU}) of $\omega_U$, it follows that
$G^\sharp \oplus E^\sharp$ is a coisotropic lifting of $TY$ to $T
U$. We denote by $\Pi^v: TU \to VTU$ the vertical projection with
respect to this splitting.

With this preparation, we are finally ready to derive the master
equation. Let $s: Y \to U \subset E^*$ be a section and denote
\begin{equation}\label{eq:nablas}
\nabla s: = \Pi^v\circ ds
\end{equation}
its locally defined ``covariant derivative''. In coordinates
$(y^1, \cdots, y^{2k},q^1, \cdots, q^{n-k})$, we have
\begin{eqnarray*}
ds\Big(\frac{\del}{\del y^j}\Big) & = &\frac{\del}{\del y^j} +
\frac{\del s_\alpha}{\del y^j} \frac{\del}{\del p_\alpha} \\
& = & e_j - R^\alpha_j\frac{\del}{\del q^\alpha} + \left(\frac{\del s_\nu}{\del y^j} +
s_\nu(b_i + b_\gamma R^\gamma_i)+ s_\beta \frac{\del R_i^\beta}{\del
q^\nu}\right)\frac{\del}{\del p_\nu}.
\end{eqnarray*}
Therefore we have derived
\begin{equation}\label{eq:nablas}
\nabla s\Big(\frac{\del}{\del y^j}\Big) = \left(\frac{\del s_\nu}{\del y^j} +
s_\nu(b_i + b_\gamma R^\gamma_i)+ s_\beta \frac{\del R_i^\beta}{\del
q^\nu} \right)\frac{\del}{\del p_\nu}.
\end{equation}
Similarly we compute
\begin{eqnarray*}
ds\Big(\frac{\del}{\del q^\nu}\Big) & = &\frac{\del}{\del q^\nu} +
\frac{\del s_\alpha}{\del q^\nu}\frac{\del}{\del p_\alpha} \\
& = & \frac{\del}{\del q^\nu} +
\frac{\del s_\alpha}{\del q^\nu} \frac{\del}{\del p_\alpha},
\end{eqnarray*}
and so
\begin{equation}\label{eq:nablanus}
\nabla s\Big(\frac{\del}{\del q^\nu}\Big) =
\frac{\del s_\alpha}{\del q^\nu} \frac{\del}{\del p_\alpha}.
\end{equation}

Recalling that $T_{\widehat\alpha} U = (E_{\widehat\alpha}^\# \oplus VT_{\widehat\alpha}
U)^{\omega_U} \oplus E_{\widehat\alpha} ^\# \oplus VT_{\widehat\alpha} U$, we conclude
that the graph of $ds$ with respect to the frame
$$
\Big\{e_1, \cdots, e_{2k}, f_1, \cdots, f_{n-k}, \frac{\del}{\del
p_1}, \cdots, \frac{\del}{\del p_{n-k}} \Big\}
$$
can be expressed by the linear map
\begin{eqnarray*}
A_H: (E^\#\oplus VTU)^{\omega_U}
& \to & VTU \cong E^*;\quad (A_H)_\alpha^i = \nabla_i s_\alpha, \\
A_I: E^\# & \to &VTU \cong E^*;  \quad (A_I)_\alpha^\beta =
\nabla_\beta s_\alpha,
\end{eqnarray*}
where
\bea\label{eq:nablaincoor}
\nabla s\Big(\frac{\del}{\del y^i}\Big) & = & (\nabla_i s_\alpha)
\frac{\del}{\del q^\alpha}, \quad  \nabla_i s_\alpha
: =  \frac{\del s_\alpha}{\del y^j}
+s_\alpha(b_i + b_\gamma R^\gamma_i)+ s_\beta \frac{\del R_i^\beta}{\del
q^\alpha} , \nonumber\\
\nabla s\Big(\frac{\del}{\del q^\beta}\Big)  & = & (\nabla_\beta s_\alpha)
\frac{\del}{\del q^\alpha}, \quad
\nabla_\beta s_\alpha  : =  \frac{\del s_\alpha}{\del q^\beta},
\eea

Finally we note that
$$
\omega_U(s)(e_i,e_j) = w_{ij} - s_\beta F^\beta_{ij} : =
\widetilde \omega_{ij}
$$
and denote its inverse by $(\widetilde \omega^{ij})$. Note that
$(\widetilde \omega_{ij})$ is invertible if $s_\beta$ is
sufficiently small, i.e., if the section $s$ is $C^0$-close to the
zero section, or its image stays inside of $U$.
Now Proposition 2.2 immediately implies

\begin{thm}\label{thm:2} Let $\nabla s$ be the vertical projection of $ds$
as in (\ref{eq:nablas}). Then  the graph of the section $s: Y \to
U$ is coisotropic with respect to $\omega_U$ if and only if $s$
satisfies
\begin{equation}
\nabla_i s_\alpha \widetilde \omega^{ij} \nabla_j s_\beta
= \nabla_\beta s_\alpha - \nabla_\alpha s_\beta
\end{equation}
for all $\alpha > \beta$
or
\begin{equation}\label{eq:masterincoord}
\frac{1}{2}
(\nabla_i s_\alpha \widetilde \omega^{ij} \nabla_j s_\beta)
f_\alpha^* \wedge f_\beta^* =
(\nabla_\beta s_\alpha)  f_\alpha^* \wedge f_\beta^*
\end{equation}
where $f_\alpha^*$ is the dual frame of
$\{\frac{\del}{\del q^1}, \cdots, \frac{\del}{\del q^{n-k}}\}$
defined by (\ref{eq:fbeta*}).
\end{thm}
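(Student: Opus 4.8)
The plan is to apply Proposition \ref{prop:coisotropic} pointwise along the graph $\Gamma_s = s(Y)$, after recognizing the tangent space of $\Gamma_s$ as a subspace of the form $C_A$ in \eqref{eq:ca}. First I would fix $y \in Y$ and work inside $T_{s(y)}U$, equipped with $\omega_U$ evaluated at $p_\beta = s_\beta(y)$. Since $d\pi \circ ds = \mathrm{id}$, the Ehresmann splitting $TU = (G^\sharp \oplus E^\sharp) \oplus VTU$ decomposes $ds$ into its horizontal lift, which maps $v \in T_yY$ to its horizontal lift $v^\sharp \in G^\sharp \oplus E^\sharp$, and its vertical part $\nabla s = \Pi^v \circ ds$. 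Hence $T_{s(y)}\Gamma_s = (ds)_y(T_yY)$ is precisely the graph of the linear map $\nabla s : G^\sharp \oplus E^\sharp \to VTU \cong E^*$, i.e. the subspace $C_A$ of \eqref{eq:ca} with $C = G^\sharp \oplus E^\sharp$, $C^\perp \cong VTU$, and $A = \nabla s$. Because $\Gamma_s$ is coisotropic if and only if each such tangent space is coisotropic, it suffices to translate \eqref{eq:grapca} into the stated equation.

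Next I would match this data to the algebraic model of Section \ref{subsec:grassmannian}. As noted after \eqref{eq:Esharp}, $G^\sharp \oplus E^\sharp$ is a coisotropic lift of $TY$, and the splitting \eqref{eq:csplit} is realized here by $H_C = G^\sharp$ and $C^\omega = E^\sharp$: indeed $E^\sharp = \mathrm{span}\{f_\alpha\}$ is isotropic and contained in $(T\pi^{-1}(\FF))$, while $G^\sharp = (T\pi^{-1}(\FF))^{\omega_U}$ carries the nondegenerate restriction $\omega_U(s)(e_i,e_j) = \widetilde\omega_{ij} = \omega_{ij} - s_\beta F^\beta_{ij}$ read off from \eqref{eq:omegaU}. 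Under this dictionary the decomposition $A = A_H \oplus A_I$ amounts to restricting $\nabla s$ to the $e_i$-directions and to the $f_\beta$-directions, so \eqref{eq:nablaincoor} gives $(A_H)^i_\alpha = \nabla_i s_\alpha$ and $(A_I)^\beta_\alpha = \nabla_\beta s_\alpha$. The map $\pi_H$, defined as the inverse of the symplectic isomorphism $\widetilde\omega^H = \omega_U|_{G^\sharp}$, is then represented by the inverse matrix $(\widetilde\omega^{ij})$.

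Finally I would substitute these into the coisotropy criterion \eqref{eq:grapca}, namely $A_I - (A_I)^* + A_H \pi_H (A_H)^* = 0$. Reading off the $(\alpha,\beta)$ component yields $\nabla_\beta s_\alpha - \nabla_\alpha s_\beta + \nabla_i s_\alpha \widetilde\omega^{ij}\nabla_j s_\beta = 0$, which is the first displayed identity of the theorem; the equivalent form \eqref{eq:masterincoord} then follows by wedging with $f_\alpha^* \wedge f_\beta^*$ and using that $(\widetilde\omega^{ij})$ is skew-symmetric, so that $\nabla_i s_\alpha \widetilde\omega^{ij}\nabla_j s_\beta$ is already antisymmetric in $\alpha,\beta$ and contributes only its wedge-symmetrized part. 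I expect the main difficulty to be bookkeeping rather than conceptual: one must check that the model identification is compatible with evaluation at the shifted point $p = s(y)$, so that $\omega_U|_{G^\sharp}$ is genuinely $\widetilde\omega_{ij}$ rather than its zero-section value, keep the $p$-dependence of the frame $\{e_i\}$ consistent throughout, and track the adjoints and signs in \eqref{eq:grapca} carefully so that the final equation emerges with the stated orientation.
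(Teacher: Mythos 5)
Your proposal reproduces the paper's own proof: after the coordinate computations of the preceding section identify $(A_H)^i_\alpha = \nabla_i s_\alpha$ and $(A_I)^\beta_\alpha = \nabla_\beta s_\alpha$ with respect to the frame $\{e_i, f_\alpha, \del/\del p_\mu\}$, the paper simply says ``Proposition \ref{prop:coisotropic} immediately implies'' the theorem, and the pointwise linear-algebra translation of \eqref{eq:grapca} that you spell out (including the skew-symmetry bookkeeping needed to pass to \eqref{eq:masterincoord}) is exactly that implication.

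There is, however, one assertion you make explicitly that deserves scrutiny, and it is the same point the paper glosses over when it calls $G^\sharp \oplus E^\sharp$ a coisotropic lifting: the claim that $E^\sharp = \operatorname{span}\{f_\alpha\}$ is isotropic at the point $s(y)$, so that the frame puts $\omega_U$ into the normal form \eqref{eq:omega0} presupposed by the criterion \eqref{eq:grapca}. Computing directly from \eqref{eq:omega*} (or from \eqref{eq:omegaU}), the term $-\pi^*b\wedge\theta_G$ contributes $\omega_U(f_\alpha,f_\beta) = p_\alpha b_\beta - p_\beta b_\alpha$, which vanishes on the zero section but not at a general point $p = s(y)$ unless $b$ restricts to zero on $T\FF$. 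Off the zero section the model of Proposition \ref{prop:coisotropic} therefore does not apply verbatim: the rank (Schur-complement) computation acquires an additional block $s_\alpha b_\beta - s_\beta b_\alpha$ in the $E$--$E$ directions, consistent with the $b\wedge p_G^*s$ contribution to $d^b(p_G^*s)$ in the invariant formulation of Theorem \ref{thm:coisotropic}. This is a gap you share with the paper rather than one you introduced, but since your write-up makes the isotropy claim explicit, this is precisely where the argument needs either the extra hypothesis $b|_{T\FF}=0$, an adjusted version of \eqref{eq:grapca} incorporating the $E$--$E$ block, or a reinterpretation of $\nabla_\beta s_\alpha$ that absorbs the $b$-term; as it stands, neither your derivation nor the paper's justifies discarding it.
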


Note that (\ref{eq:masterincoord}) involves terms of all order of
$s_\beta$ because the matrix $(\widetilde \omega^{ij})$
is the inverse of the matrix
$$
\widetilde \omega_{ij} = \omega_{ij} - s_\beta F^\beta_{ij}.
$$
There is a special case where the curvature vanishes
i.e., satisfies
\begin{equation}\label{eq:special}
F_G = F^\beta_{ij}\frac{\del}{\del q^\beta}\otimes dy^i \wedge
dy^j= 0
\end{equation}
in addition to (\ref{eq:leaves}). In this case, $\widetilde
\omega_{ij} = \omega_{ij}$ which depends only on $y^i$'s and so
does $\omega^{ij}$. Therefore (\ref{eq:masterincoord}) is reduced
to the quadratic equation
\begin{equation}\label{eq:quad-master}
\frac{1}{2}(\nabla_i s_\alpha \omega^{ij} \nabla_j s_\beta)
f_\alpha^* \wedge f_\beta^* = (\nabla_\beta s_\alpha) f_\alpha^*
\wedge f_\beta^*.
\end{equation}

\section{Deformation of strong homotopy Lie algebroids}
\label{sec:deform-shla}

In this section, we provide an invariant description of the master equation we have
derived in the previous section. This can be regarded as the equation for
the coisotropic submanifolds in the formal power series
version of the equation or in the formal manifold in the sense of
Kontsevich \cite{konts:poisson}, \cite{aksz}.

\subsection{$b$-deformed Oh-Park's strong homotopy Lie algebroids \cite{oh-park}}
\label{subsec:b-deformation}

We start with the normal form \eqref{eq:omegaE*} of the symplectic thickening.
We also note that the discussion of leaf space connection and
the curvature, in particular the one-form $\theta_G$
does not depend on the closed one-form $b$ but only
depends on the conformal pre-symplectic form $\omega$ and the splitting
$TY = G\oplus E$ only.
In this regard, we can view the normal form in \eqref{eq:omegaE*} as
a deformation of the nondegenerate two form
$$
\pi^*\omega - d\theta_G
$$
to a conformal symplectic form relative to $\pi^*b$.
So from now on, we denote $\omega_{U} = \pi^*\omega - d\theta_G$ and
\be\label{eq:omegaE*b}
\omega_{U}^b = \omega_{U} - d^{\pi^*b} \theta_G.
\ee
This deformation is responsible for the appearance of $b$-terms in \eqref{eq:omegaU}
and then \eqref{eq:Tpi-1FFomegaU}, \eqref{eq:ej} and eventually for
the covariant derivative \eqref{eq:nablas}.

Again we regard \eqref{eq:nablaincoor} as the deformation of
the old covariant derivative formula appearing in  \cite[(7.3)]{oh-park} and denote
the full covariant derivative
\be\label{eq:nablabs}
\nabla^b s = \nabla s + b|_\FF s + \langle b|R|s
\ee
where $b|_\FF$ is the restriction to the null-foliation of the one-form $b$
and $\langle b|R|$ is the pairing of $b$ and $R$ which produces
a one-form on $G$ with values in $T\FF$.

Now we give a deformed version  of the notion of {\it strong
homotopy Lie algebroid} introduced in \cite{oh-park}.

\begin{defn} Let $E \to Y$ be a Lie algebroid. A  $b$-deformed
{\it $L_\infty$-structure} over the
Lie algebroid is a structure of strong homotopy Lie algebra
$(\frak l[1], \frak m)$ on the associated $b$-deformed $E$-de Rham complex
$\frak l^\bullet = \Omega^\bullet(E)
= \Gamma(\Lambda^\bullet(E^*))$
such that $\frak m_1$ is the $E$-differential $^Ed^{\bar b}$ induced by the (deformed) Lie algebroid structure on $E$
as described in subsection \ref{sliealgebroid}.
We call the pair $(E \to Y, \frak m)$ a $b$-deformed strong homotopy  Lie algebroid.
\end{defn}

Here we refer to \cite{nest-tsygan} or Appendix \ref{sliealgebroid} for the
definition of $E$-differential used in this definition.

With this definition of  a $b$-deformed  strong homotopy  Lie algebroid, we will
show that for given l.c.p-s. manifold $(Y,\omega,b)$
each splitting $\Pi: TY = G\oplus T\FF$ induces a
canonical $L_\infty$-structure over the Lie algebroid $T\FF \to Y$.

The following linear map and quadratic map are introduced in
\cite{oh-park} which play crucial roles in the construction of
$L_\infty$-structure on the foliation de Rham complex: a linear map
\begin{equation} \label{eq:lin}
\widetilde \omega: \Omega^1(Y;\Lambda^\bullet E^*) \to
\Gamma(\Lambda^{\bullet +1}E^*) = \Omega^{\bullet + 1}(\FF),
\end{equation}
a quadratic map
\be\label{eq:quad} \langle \cdot, \cdot
\rangle_{\omega}: \Omega^1(Y;\Lambda^{\ell_1} E^*) \otimes
\Omega^1(Y;\Lambda^{\ell_2} E^*) \to \Omega^{\ell_1 +
\ell_2}(\FF),
\ee
and the third map that is induced by the transverse $\Pi$-curvature, whose
definition is now in order.

We recall the definitions of those maps. The linear map $\widetilde \omega$ is
defined by
\begin{equation}
\label{eq:lin} \widetilde\omega(A): = (A|_E)_{skew}.
\end{equation}
Here note that an element $A \in \Omega^1(Y; \Lambda^kE^*)$ is a
section of $T^*Y\otimes \Lambda^k E^*$. Restricting $A$ to $E$ for
the first factor  we get  $A|_E \in E^* \otimes \Lambda^k E^*$. Then $(A|_E)_{skew}$ is the
skew-symmetrization of $A|_E$. The quadratic map is defined by
$$
\langle A,B \rangle_{\omega}:= \langle A|\pi |B \rangle - \langle
B|\pi|A \rangle
$$
where $\pi$ is the transverse Poisson bi-vector on $N^*\FF$ associated to the
transverse symplectic form $\omega$ on $N\FF$.

We will denote
\begin{equation}\label{eq:Fsharp}
F^\#:= F\omega^{-1} = F^{\alpha j}_i dy^i \otimes
\Big(\frac{\del}{\del y^j} + R^\beta_j \frac{\del}{\del
q^\beta}\Big) \otimes \frac{\del}{\del q^\alpha} \in \Gamma(G^*
\otimes G \otimes E),
\end{equation}
where $F^{\alpha j}_i = F^\alpha_{ik}\omega^{kj}$. Note that we
can identify $\Gamma(G^* \otimes G \otimes E)$ with
$\Gamma(N^*\FF\otimes N\FF \otimes E)$ via the isomorphism $\pi_G:
G \to N\FF$.

For given $\xi \in \Omega^\ell(\FF)$, we define
\begin{equation}\label{eq:b-lin}
d_{\FF}^b(\xi): = (\nabla^b \xi|_E)_{skew},
\end{equation}
and deformed bracket
\begin{equation}\label{eq:b-quad}
\{\xi_1, \xi_2\}_{\Pi}^b: = \langle \nabla^b \xi_1, \nabla^b \xi_2 \rangle_{\omega} =
\sum_{i < j} \omega^{ij}(\nabla_i^b \xi_1) \wedge (\nabla_j^b \xi_2 ).
\end{equation}
Here the map in \eqref{eq:b-lin} is nothing but the ${\bar b}$-deformed leafwise differential of the
null foliation which is indeed independent of the choice of
splitting $\Pi: TY = G \oplus T\FF$ but depends only on the
foliation and the projection of the one-form $b$ to $\FF$, see also subsection \ref{sliealgebroid}.
By  Remark \ref{rem:onb}  the  obtained leafwise differential  depends only on $\omega$. We
use $d^b_\FF$ and $d^{\bar b}_\FF$ interchangeably.

The second is a bracket
in the transverse direction which is a $b$-deformation of the one
given in  \cite[(9.13)]{oh-park}.

Now we promote the maps $d_\FF^b$ and $\{\cdot, \cdot\}^b$ to an infinite family of
graded multilinear maps
\begin{equation}\label{eq:frakm}
\frak m_\ell^b = (\Omega[1]^\bullet(\FF))^{\otimes\ell} \to
\Omega[1]^{\bullet}(\FF)
\end{equation}
so that the structure
$$
\left(\bigoplus_{j=0}^{n-k} \frak l[1]^j; \{\frak m_\ell^b\}_{1 \leq
\ell < \infty}\right)
$$
defines a {\it strong homotopy Lie algebroid} on $E=T\FF \to Y$ in
the above sense.  Here  $\Omega[1]^\bullet(\FF)$ is the shifted complex of
$\Omega^\bullet(\FF)$, i.e., $\Omega[1]^k(\FF) = \Omega^{k+1}(\FF)$
and $\frak m_1$ is defined by
$$
\frak m_1^b(\xi) = (-1)^{|\xi|} d_{\FF}^b(\xi)
$$
and $\frak m_2$ is given by
$$
\frak m_2^b(\xi_1, \xi_2) =
(-1)^{|\xi_1|(|\xi_2|+1)}\{\xi_1,\xi_2\}_\Pi^b.
$$
On the un-shifted group $\frak l$, $d_{\FF}^b$ defines a
differential of degree 1 and $\{\cdot, \cdot\}_{\omega}$ is a
graded bracket of degree 0 and $\frak m_\ell^b$ is a map of degree
$2-\ell$.

We now define $\frak m_\ell^b$ for $\ell \geq 3$. Here enters the
transverse $\Pi$-curvature $F=F_\Pi$ of the splitting $\Pi$ of the
null foliation $\FF$. We define
\begin{equation}\label{eq:mell}
\frak m_\ell^b(\xi_1, \cdots, \xi_\ell) : =
\sum_{\sigma \in S_\ell}
(-1)^{|\sigma|}\langle \nabla^b \xi_{\sigma(1)},
(F^\#\rfloor \xi_{\sigma(2)}) \cdots
(F^\#\rfloor \xi_{\sigma(\ell-1)})
\nabla^b \xi_{\sigma(\ell)}\rangle_\omega
\end{equation}
where $|\sigma|$ is the standard Koszul sign in the suspended complex.
We have now arrived at our definition of strong
homotopy Lie algebroid associated to the coisotropic submanifolds,
which is a $b$-deformation of the one introduced in \cite[section 9]{oh-park},
but which is applied \emph{after enlarging our category} to that of
locally conformal pre-symplectic two forms instead of pre-symplectic two forms.

\begin{thm}\label{algebroid}
Let $(Y,\omega,b)$ be a l.c.p.s. manifold and $\Pi: TY =
G\oplus T\FF$ be a splitting. Then $\Pi$ canonically induces a
structure of strong homotopy Lie algebroid on $T\FF$ in that the
graded complex
$$
\left(\bigoplus_\bullet \Omega[1]^\bullet(\FF),
\{\frak m_\ell^b \}_{1 \leq \ell < \infty}\right)
$$
defines the structure of strong homotopy Lie algebra. We denote by
$\frak l_{(Y, \omega, b;\Pi)}$ the corresponding strong
homotopy Lie algebra.
\end{thm}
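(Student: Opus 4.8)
The plan is to verify the strong homotopy Lie algebra axioms
$$
\sum_{i+j=\ell+1}\ \sum_{\sigma}\ \pm\,\frak m_j^b\bigl(\frak m_i^b(\xi_{\sigma(1)},\dots,\xi_{\sigma(i)}),\xi_{\sigma(i+1)},\dots,\xi_{\sigma(\ell)}\bigr)=0
$$
for every $\ell\geq 1$, by reducing them to the corresponding relations for Oh-Park's undeformed structure $\{\frak m_\ell\}$ proved in \cite[Section 9]{oh-park}. Since each such relation is a $C^\infty(Y)$-multilinear identity between sections of $\Lambda^\bullet E^*$, it is \emph{local}, and it suffices to check it inside a single foliation chart with coordinates $(y^1,\dots,y^{2k},q^1,\dots,q^{n-k})$.

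First I would record the two structural identities that drive all the cancellations and that replace, in the $b$-deformed setting, the two inputs $d_\FF\circ d_\FF=0$ and $\nabla$-parallelism of the transverse form $\omega$ used by Oh-Park. The first is $d_\FF^b\circ d_\FF^b=0$, which is immediate from the closedness of $b$: leafwise $d_\FF^b=d_\FF+\bar b\wedge$ with $d_\FF\bar b=0$ and $\bar b\wedge\bar b=0$. This already yields the $\ell=1$ relation $\frak m_1^b\circ\frak m_1^b=0$. The second is the transverse conformal identity \eqref{eq:omegaijbeta}, equivalently $\LL_\xi\omega=-b(\xi)\omega$ from Proposition \ref{tr-sy-form}: it says that $\omega$ is parallel with respect to $\nabla^b$ up to the conformal factor recorded by $b$, which is exactly what makes the pairing $\langle\cdot,\cdot\rangle_\omega$ and the bracket $\{\cdot,\cdot\}_\Pi^b$ of \eqref{eq:b-quad} compatible with $d_\FF^b$.

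With these in hand I would run Oh-Park's verification term by term, replacing $d$ by $d^b$, $\nabla$ by $\nabla^b$, and $\nabla$-parallelism of $\omega$ by \eqref{eq:omegaijbeta}. The $\ell=2$ relation is the assertion that $\frak m_1^b$ is a graded derivation of $\frak m_2^b$; it follows from the Leibniz rule for $\nabla^b$ together with \eqref{eq:omegaijbeta}, the extra $b$-terms in the covariant derivative \eqref{eq:nablabs} assembling, after invoking $db=0$, into $\bar b\wedge(\,\cdot\,)$ contributions that match the $\bar b$-twist in $d_\FF^b$. A useful organizing device here is that on a contractible chart one may write $b=dg$, and multiplication by $e^{g}$ intertwines $d_\FF^b=e^{-g}\,d_\FF\,e^{g}$; this conjugation transports the $\ell=1$ part of the structure to the undeformed one exactly, and makes transparent that closedness of $b$ is the decisive hypothesis.

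The main obstacle will be the higher relations $\ell\geq 3$, where the transverse $\Pi$-curvature enters through $F^\sharp$ in \eqref{eq:mell}. Here the $b$-deformation genuinely departs from a pure gauge conjugation: the covariant derivative $\nabla^b$ is built from the coordinate derivative $\partial/\partial y^i$ rather than from the horizontal lift $e_i$, so conjugation by $e^{g}$ leaves residual terms proportional to $R_i^\gamma b_\gamma$ coupling the leafwise component of $b$ to the connection coefficients. The crux is to show that these residuals are precisely absorbed by the curvature maps $\frak m_\ell^b$, i.e. to establish the $b$-deformed Bianchi identity relating $d_\FF^b F^\sharp$ to the $\nabla^b$-derivatives of the transverse data. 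Granting this identity, the same combinatorial cancellation over permutations that proves the undeformed relations in \cite{oh-park} goes through, and the $\frak m_\ell^b$ satisfy all the $L_\infty$ relations; I expect establishing the $b$-deformed Bianchi identity, and tracking the Koszul signs through the $b$-twist, to be the only genuinely new computation.
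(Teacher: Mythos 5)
Your route is genuinely different from the paper's, and it stalls exactly where the real content of the theorem lies. The paper does not verify the $L_\infty$-relations term by term in a foliation chart. Instead it packages all of them into a single identity on the odd symplectic supermanifold $T[1]U$: the l.c.s.\ form induces an even function $H^*_b = H^* + p_\nu q_\nu^* b$ on the thickening, the key computational steps are $\{H^*,H^*\}_\Omega = b\,H^*$ (a restatement of the $d^b$-closedness of $\omega_U$; note that the \emph{undeformed} Hamiltonian is no longer square-zero in the l.c.s.\ setting) followed by $\{H^*_b,H^*_b\}_\Omega=0$ on the relevant function space (Proposition \ref{prop:H*bH*b}), and the coisotropic condition (Lemma \ref{lem:brane}) is what allows the derivation $\{H^*_b,\cdot\}_\Omega$ to be restricted to $\L=T\FF[1]$. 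Expanding $H^*_b$ by ghost degree then produces the $\frak m_\ell^b$ and converts $\{H^*_b,H^*_b\}_\Omega=0$ into all the $L_\infty$-relations simultaneously. Your plan to ``run Oh-Park's verification term by term'' also misreads the source: Oh-Park's Theorem 9.4 is itself proved by this superfield argument, not by a combinatorial cancellation over permutations, so there is no existing tensorial verification for you to deform.

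Concretely, the gap is at $\ell\geq 3$. You correctly isolate the difficulty --- the interaction of the $b$-twist in $\nabla^b$ with the transverse $\Pi$-curvature entering through $F^\#$ in \eqref{eq:mell} --- but you then defer it to an unstated ``$b$-deformed Bianchi identity'' and assert that the rest ``goes through.'' What is actually needed is more than an analogue of $d^\Pi F_\Pi=0$: one needs the full system of compatibilities among $d^{\bar b}_\FF$, the bracket $\{\cdot,\cdot\}^b_\Pi$, the curvature insertion maps, and the identity $(d^\Pi)^2 B=[F_\Pi,B]_\Pi$, all corrected by the extra terms of \eqref{eq:nablabs}; these are precisely the components, in each ghost degree, of $\{H^*_b,H^*_b\}_\Omega=0$. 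Until those identities are stated and proved, your argument establishes only the $\ell=1$ and (plausibly) $\ell=2$ relations. The local conjugation $d_\FF^{\bar b}=e^{-g}\,d_\FF\,e^{g}$ is a reasonable sanity check for $\frak m_1^b$, but, as you yourself note, it does not extend to the higher operations because of the inhomogeneous dependence on $\omega^{-1}$ and the $R_i^\gamma b_\gamma$ couplings, so it cannot substitute for the missing computation.
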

\begin{proof} The proof of this theorem follows the strategy used in
the proof of Theorem 9.4 \cite{oh-park}, which uses the formalism
of super-manifolds and odd symplectic structure on the
super tangent bundle $T[1]U$ \cite{aksz} of the l.c.p.s. thickening $U$ of
$(Y,\omega, b)$.

We change the parity of $TU$ along the fiber
and denote by $T[1]U$ the corresponding super tangent bundle of
$U$. One considers a multi-vector field on $U$ as a (fiberwise)
polynomial function on $T^*[1]U$. For example, the bi-vector field $P$,
inverse to the {\it non-degenerate} form $\omega_U$ (cf. (\ref{eq:omegaE*b})),
defines  a quadratic function, which we denote by $H$.
This also coincides with  the push-forward of the even function $H^*: T[1]U \to \R$ induced by   $\omega _U$.
We denote by $\{\cdot, \cdot\}_\Omega$
the (super-)Poisson bracket associated to the odd symplectic form
$\Omega$ on $T[1]U$. Then the bracket operation
$$
Q : = \{H^*, \cdot\}_\Omega
$$
defines a derivation on the set $\mathcal O_{T[1]U}$ of ``functions''
on $T[1]U$: Here $\mathcal O_{T[1]U}$ is the set of differential
forms on $U$ considered as fiberwise polynomial functions on
$T[1]U$. We refer to \cite{getzler} or  \cite[Appendix]{oh-park} for the precise mathematical meaning for
this correspondence. Therefore it defines an odd vector field.

Restricting ourselves to a Darboux neighborhood of $\L = T\FF[1]
\subset T[1]U$, we identify the neighborhood with a neighborhood
of the zero section $T^*[1]\L$. Using the fact that
(\ref{eq:delta'}) depends only on $\xi$, not on the extension, we
will make a convenient choice of coordinates to write $H$ in the
Darboux neighborhood and describe how the derivation $Q = \{H,
\cdot \}_\Omega$ acts on $\Omega^{\bullet}(\FF)$ in the canonical
coordinates of $T^*[1]\L$. In this way, we can apply the canonical
quantization which provides a canonical correspondence between
functions on ``the phase space'' $T^*[1]\L$ and the corresponding
operators acting on the functions on the ``configuration space''
$\L$, later when we find out how the deformed differential $\delta^b$
\eqref{eq:deltab} acts on $\Omega^\bullet(\FF)$.

We denote by $(y^i,q^\alpha, p_\alpha, y^*_i, q^*_\alpha,
p^\alpha_*)$ the canonical coordinates $T^*\L$ associated with the
coordinates $(y^i,q^\alpha, p_\alpha)$ of $N^*\FF$. Note that
these coordinates are nothing but the canonical coordinates of
$N^*Y \subset T^*U$ pulled-back to $T\FF \subset TU$ and its
Darboux neighborhood, with the corresponding parity change: We
denote the (super) canonical coordinates of $T^*[1]\L$ associated
with $(y^i,q^\alpha\mid p_\alpha)$ by
$$
\Big(\begin{matrix}y^i, & q^\alpha  && \mid p_*^\alpha \\
y^*_i, & q^*_\alpha && \mid p_\alpha
\end{matrix}\Big)
$$
Here we note that the degree of $y^i,\, q^\alpha$ and $p_\alpha$
are $0$ while their anti-fields, i.e., those with $*$ in them have
degree $1$. And we want to emphasize that $\L$ is given by the
equation
\begin{equation}\label{eq:LL}
y_i^* = p_\alpha = p_*^\alpha = 0
\end{equation}
and $(y^i, y^*_i)$, $(p_\alpha, q_\alpha^*)$ and $(p_*^\alpha,
q^\alpha)$ are conjugate variables. In terms of these coordinates,
\cite[(9.23)]{oh-park} provides the formula
\be\label{eq:H^*}
H^* = \frac{1}{2} \widetilde \omega^{ij} y_i^\#y_j^\# + p_*^\delta q_*^\delta.
\ee
Here,  we define $y_i ^\#$ to be
$$
y_i ^\#:= y_i^* + R ^\delta_i p^\delta_* - \left(p_\nu(b_i + b_\gamma R_i^\gamma)
+ p_\beta {\partial R^\beta_i\over \partial q^\delta} q^* _\delta\right)
$$
arising from \eqref{eq:ej} similarly as in \cite[(9.23)]{oh-park}.
When $\omega$ is a closed symplectic form as in \cite{oh-park}, we have
$\{H^*,H^*\} = 0$. However in the current l.c.s. case,
this is no longer the case.

\begin{lem} Consider the l.c.p.s. manifold $(Y,\omega,b)$
and let $(U,\omega_U, \pi^*b)$ be its c.p.s. thickening constructed before.
Regard the closed one-form $\pi^*b$ as the odd function
$b = b_{q^\gamma} p^\gamma_* + b_{p_\delta} q_\delta^* + b_i \psi_i^\#$
where $p^\gamma_* = dq^\gamma, \, \psi_i^\# = dy_i^\#, \, q_\delta^* = dp_\delta$.
Then $\{H^*,H^*\} = b H^*$.
\end{lem}
\begin{proof} In this proof, we use Einstein's summation convention, whenever we feel
convenient. Using the canonical bracket relations,
we compute
\beastar
\{H^*, H^*\} = \frac{1}{2}\left(\frac{\del \widetilde \omega^{ij}}{\del q_\delta} q_\delta^*
+ \frac{\del \widetilde \omega^{ij}}{\del p^\delta} p^\delta_*\right) y_i^\# y_j^\#
 + \frac{\del \widetilde \omega^{ij}}{\del y_k^\#}  y_i^\# y_j^\# \psi_k^\#.
\eeastar
However the $d^b$-closedness of $\omega_U$ means that this sum is  precisely equivalent to
\beastar
(b_{q^\gamma} q^\gamma_* + b_{p_\delta} p_\delta^* \omega^{ij}
+  b_i \psi_i^\#) \frac{1}{2} \omega^{ij}  y_i^\# y_j^\#
= b H^*.
\eeastar
This finishes the proof.
\end{proof}

We will be interested in whether
one can canonically restrict the vector field $Q$ to $\L = T\FF[1]$ or equivalently whether the
function $H$ has constant value on $\L$. Here comes the
coisotropic condition naturally.  The following was proved in
\cite{oh-park}, which still holds for the current l.c.s. case.

\begin{lem} \cite[Lemma 9.5]{oh-park}\label{lem:brane}
Let $H$ be the even function on $T[1]X$ induced by the symplectic
form $\omega_X$, and $H^*: T^*[1] X \to \R$ be its push-forward by
the isomorphism $\widetilde \omega_X: T[1]X \to T^*[1]X$. When $Y
\subset (X,\omega)$ is a coisotropic submanifold we have
$H^*|_{N^*[1]Y} = 0$. Conversely, any (conic) Lagrangian subspace
$\mathbb{L}^* \subset T^*[1]X $ satisfying $H^*|_{\mathbb{L}^*}=0$
is equivalent to $N^*[1]Y$, for some coisotropic submanifold $Y$ of $(X,\omega)$.
\end{lem}

Now we consider the $b$-deformed Hamiltonian
$H^*_b: T[1]U \to \R$ the even function induced by the l.c.s. form
$\omega_U^b$. From the expression \eqref{eq:omegaU}, a straightforward calculation leads to
\be\label{eq:H*b}
H^*_b = H^* + p_\nu q_\nu^* b
\ee
where $q_\nu^* = \chi^\nu - R_j^\nu \psi_j$ is the conjugate variable of
$p_\nu$ and $b = b_\gamma \chi^\gamma + b_i \psi^i$.

\begin{prop}\label{prop:H*bH*b} $\{H^*_b, H^*_b\} = 0$ when restricted to $\mathcal O_{T[1]X}$.
\end{prop}
\begin{proof} This is an immediate translation of the equality
$d^b d^b = 0$. For readers' convenience, we prove this by direct calculation.
By \eqref{eq:LL}, we  have
$$
\{p_\nu q_\nu^* b, p_\nu q_\nu^* b\} = 0
$$
when restricted to $\mathcal O_{\L^*}$. Finally we note that the vector field
$Q = \{H^*, \cdot\}$ restricts to
$$
\psi^i_* \frac{\del}{\del y^i} + \eta_\alpha^* \frac{\del}{\del q_\alpha^*}
+ \eta_\alpha \frac{\del}{\del q^\alpha}.
$$
Therefore having \eqref{eq:LL} in our mind, we compute
\beastar
\{H^*, p_\nu q_\nu^* b\} & = &\{H^*, p_\nu q_\nu^*(b_\gamma p^\gamma_* + b^i \psi^i)\} \\
& = & \{H^*,H^*\} + \{H^*, p_\nu q_\nu^* b^i y_i^*\} + \{ p_\nu q_\nu^* b^i y_i^*, H^*\}.
\eeastar
But we have $\{H^*,H^*\}  = bH^*$ and compute
$$
\{p_\nu q_\nu^* b, H^*\} = - p_\nu \left(\frac{\del H^*}{\del p_\nu} + q_\nu^* \{b,H^*\}\right).
$$
Both restricts to zero on $\mathcal O_{T[1]X}$ by \eqref{eq:LL} and Lemma \ref{lem:brane}.
This completes the proof of Proposition \ref{prop:H*bH*b}.
\end{proof}

Noting that $\L^* = N^*[1]Y$ is mapped to $\L = T\FF[1]$ under the
isomorphism $\widetilde \omega_X$, this lemma enables us to
restrict the odd vector field $Q$ to $T\FF[1]$. We need to
describe the Lagrangian embedding $T\FF[1] \subset T[1]X$ more
explicitly, and describe the induced directional derivative acting
on
$
\Omega^\bullet(\FF)
$
regarded as a subset of ``functions'' on $T\FF[1]$. (Again we refer
to \cite[Appendix]{oh-park} or \cite{getzler} for the precise explanations of this).

Now we define  $\delta'_b: \Omega^\bullet(\FF) \to \Omega^\bullet(\FF)$ by the formula
\begin{equation}\label{eq:delta'}
\delta'_b(\xi): = \{H^*_b, \widetilde \xi\}_\Omega\Big|_{\L}
\end{equation}
where $\widetilde \xi$ is the extension of $\xi$ in a neighborhood
of $\L \subset T[1]X$: the extension that we use is the lifting of
$\xi \in \Omega^\bullet(\FF)$ to an element of $\Omega^\bullet(U)$
obtained by the (local) Ehresman connection constructed in section
\ref{sec:master}. The condition $Q|_\L\equiv 0$ implies that this
formula is independent of the choice of  (local) Ehresman
connection. We will just denote $\delta'_b(\xi)= \{H^*_b, \xi\}_\Omega$
instead of (\ref{eq:delta'}) as long as there is no danger of
confusion.

Obviously, $\delta'_b$ satisfies $\delta'_b\delta'_b = 0$ because of
$\{H^*_b,H^*_b\} = 0$ by Proposition \ref{prop:H*bH*b}.
Now it remains to verify that this is translated
into the $L_\infty$-relation $\delta^b\delta^ b = 0$ in the tensorial
language which is exactly what we wanted to prove. For this
purpose, we need to describe the map $\delta'_b: \Omega^\bullet(\FF)
\to \Omega^\bullet(\FF)$ more explicitly.

The rest of the argument is precisely the same as the end of
the proof of  \cite[Theorem 9.4]{oh-park}.
By expanding the even function $H^*_b$
above into the power series
$$
H_b^* = \sum_{\ell =1} H_\ell, \quad H_\ell \in \frak l^\ell,
$$
in terms of the degree (i.e., the number of
factors of odd variables $(y_i^*, p_*^\alpha, p_\alpha)$
or the `ghost number' in the physics language)
our definition of $\frak m^b_\ell$ exactly corresponds to the
$\ell$-linear operator
$$
(\xi_1, \xi_2, \cdots, \xi_\ell) \mapsto \{\cdots \{H_\ell,
\xi_1\}_\Omega, \cdots\}_\Omega, \xi_\ell\}_\Omega.
$$
Note that the above power series acting on
$(\xi_1, \cdots, \xi_\ell)$ always reduces to a finite sum and so is
well-defined as an operator. Then by definition, the coderivation
\be\label{eq:deltab}
\delta^b = \sum_{\ell =1}^\infty \widehat{\frak m}^b_\ell
\ee
precisely corresponds to $\delta'_b = \{H^*_b, \cdot\}_\Omega$. Here $\frak m^b_\ell$
is the $b$-deformed $\frak m^b_\ell$-map defined by
$$
\frak m^b_\ell(\xi_1,\xi_2, \cdots, \xi_\ell) = \sum_{k=0}^\infty \frac{1}{k!}\frak m_{\ell+k}
(\underbrace{b,\cdots,b}_{k\, \mbox{times}},\xi_1,\cdots, \xi_\ell).
$$
(See \cite[section 3.6]{fooo} for a general discussion on the deformation of $A_\infty$ structures.
This definition is the symmetrized version thereof.)
The $L_\infty$ relation $\delta^b \delta^b = 0$ then immediately follows
from $\delta'_b \delta'_b = 0$. This finishes the proof.
\end{proof}

For example, under the above translation,
the action of the odd vector field
$Q\mid_\L $ on
$$
\frak l = \bigoplus_{\ell = 0}^{n-k}\frak l^\ell \cong
\bigoplus_{\ell = 0}^{n-k}\Omega^\ell(\FF),
$$
 translates into to the leafwise differential $d_{\FF}^{\bar b}$.
This finishes the proof of Theorem \ref{algebroid}.
\qed
\subsection{Gauge equivalence}
\label{subsec:gauge}

In this section 
we prove
that two strong homotopy Lie algebroids we have associated to two
different splittings are {\it gauge equivalent} or $L^\infty$-isomorphic.
This is the formal analog to the ($C^\infty$)-Hamiltonian equivalence of
the coisotropic submanifolds.  

\begin{defn}\label{Lmorphism}
Let $(C[1], \frak m)$, $(C'[1],\frak m')$ be $L_\infty$-algebras
and $\delta, \, \delta'$ be the associated coderivation. A
sequence $\varphi = \{\varphi_k\}_{k=1}^\infty$ with
$\varphi_k:E_kC[1] \to C'[1]$ is said to be an {\it $L_\infty$-homomorphism}
if the corresponding coalgebra homomorphism $\widehat
\varphi: EC[1] \to EC'[1]$ satisfies
$$
\widehat \varphi \circ \delta
= \delta' \circ \widehat \varphi.
$$
We say that $\varphi$ is an {\it $L_\infty$-isomorphism}, if there
exists a sequence of homomorphisms $\psi=
\{\psi_k\}_{k=1}^\infty$, $\psi: E_kC'[1] \to C'[1]$ such that its
associated coalgebra homomorphism $\widehat \psi: EC'[1] \to
EC[1]$ satisfies
$$
\widehat \psi \circ \widehat \varphi = id_{EC[1]},
\quad
\widehat \varphi \circ \widehat \psi = id_{EC'[1]}.
$$
In this case, we say that two $L_\infty$ algebras,
$(C[1], \frak m)$ and $(C'[1], \frak m')$ are $L_\infty$
isomorphic.
\end{defn}

The following theorem is the $b$-deformed version of  \cite[Theorem 10. 1]{oh-park}.
(See also Definition 8.3.6 \cite{fukaya}.)

\begin{thm}\label{thm:splitting-gauge} The two structures of strong homotopy Lie algebroid
on the null distribution $E=T\FF$ of $(Y,\omega, b)$ induced by two choices of splitting
$\Pi, \, \Pi^\prime$ are canonically $L_\infty$-isomorphic.
\end{thm}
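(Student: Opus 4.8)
The plan is to follow the super-geometric (BV/AKSZ) scheme used in the proof of Theorem \ref{algebroid} and to produce the desired $L_\infty$-isomorphism from the geometric comparison map between the two l.c.s. thickenings supplied by Proposition \ref{diffeo}. Recall that each splitting $\Pi$ encodes its strong homotopy Lie algebroid through the odd homological vector field $Q_b^\Pi = \{H_b^{*,\Pi}, \cdot\}_\Omega$ on the odd symplectic manifold $(T[1]U,\Omega)$, restricted to $\L = T\FF[1]$; via the coderivation $\delta^b = \sum_\ell \widehat{\frak m}_\ell^b$ this is precisely the structure $\frak l_{(Y,\omega,b;\Pi)}$. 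Thus to compare the structures attached to $\Pi$ and $\Pi'$ it suffices to compare the two homological vector fields through a morphism of the ambient super data.

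First I would invoke Proposition \ref{diffeo} to obtain neighborhoods $U,U'$ of the zero section and an l.c.s. diffeomorphism $\phi: U \to U'$ with $\phi^*\omega_{\Pi'} = e^f\omega_\Pi$, $\phi|_Y = \mathrm{id}$ and $T\phi|_{T_YE^*} = \mathrm{id}$. Since $\phi$ preserves the fibration $\pi$ over the common leaf space and fixes $Y$, its canonical lift to the parity-shifted tangent bundle $T[1]\phi: T[1]U \to T[1]U'$ preserves the canonical odd symplectic form $\Omega$ and carries the Lagrangian $\L = T\FF[1]$ to $\L' = T\FF[1]$ identically along the zero section. The next step is to check that this lift intertwines the even Hamiltonians: because $\phi$ is an l.c.s. (conformal symplectic) diffeomorphism, $(T[1]\phi)^* H_b^{*,\Pi'}$ governs the same $b$-twisted coisotropic condition as $H_b^{*,\Pi}$, and hence $(T[1]\phi)_* Q_b^\Pi = Q_b^{\Pi'}$ after restriction to the common $\L$.

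Granting this intertwining, I would define $\varphi = \{\varphi_k\}$ to be the sequence of Taylor coefficients of the induced map on $\Omega^\bullet(\FF)$ obtained by transporting ``functions'' on $\L'$ to ``functions'' on $\L$ through $T[1]\phi$ and the local Ehresmann connections constructed earlier. The relation $\widehat\varphi\circ\delta^b_\Pi = \delta^b_{\Pi'}\circ\widehat\varphi$ is then exactly the $L_\infty$-homomorphism condition of Definition \ref{Lmorphism}, while the normalizations $\phi|_Y = \mathrm{id}$ and $T\phi|_{T_YE^*} = \mathrm{id}$ force $\varphi_1 = \mathrm{id}$; applying the same construction to $\phi^{-1}$ produces the inverse coalgebra homomorphism $\widehat\psi$, so that $\varphi$ is an $L_\infty$-isomorphism in the sense of Definition \ref{Lmorphism}. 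Canonicity follows from the canonical (up to homotopy) nature of the Moser construction underlying Proposition \ref{diffeo}.

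The main obstacle will be the second step, namely reconciling the conformal factor $e^f$ with the $b$-twisting. In the symplectic setting of \cite[Theorem 10.1]{oh-park} one has $\phi^*\omega_{\Pi'} = \omega_\Pi$ and the Hamiltonian pushes forward on the nose; here the rescaling by $e^f$ alters $H^*$ and a priori changes $Q = \{H^*,\cdot\}_\Omega$. The point to verify is that the l.c.s. compatibility relation tying $f$ to $b$ (so that both thickenings carry the Lee form $\pi^*b$), together with the conformal invariance of the maximal-degeneracy condition $(\omega_G(s))^{k+1} = 0$ of Theorem \ref{thm:coisotropic}, makes the rescaling act trivially on the restricted homological vector field, exactly as the identity $\{H_b^*, H_b^*\} = 0$ of Proposition \ref{prop:H*bH*b} is conformally robust. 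Once this compatibility is confirmed, the remaining verifications are the routine order-by-order translations already carried out in the proof of Theorem \ref{algebroid}.
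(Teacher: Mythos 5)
Your overall strategy --- transporting the super-geometric data through the comparison diffeomorphism of Proposition \ref{diffeo} --- is not the route the paper takes, and as written it contains a genuine gap that you yourself flag but do not close. The paper's proof never invokes Proposition \ref{diffeo}. It works on a single neighborhood and uses the identity
$$
\omega_\Pi^b - \omega_{\Pi_0}^b = d^{\pi_Y^*b}(\theta_{\Pi_0} - \theta_\Pi),
$$
i.e.\ the two l.c.s.\ forms differ by a $d^{\pi^*b}$-\emph{exact} term, not by a conformal rescaling. In the super language this becomes $H_\Pi^b - H_{\Pi_0}^b = \{H_{\Pi_0}^b,\Gamma\}_\Omega$ for the degree-one function $\Gamma$ associated to the one-form $\theta_{\Pi_0}-\theta_\Pi$ (which does not depend on $b$), and the $L_\infty$-isomorphism is then produced by the gauge-transformation argument of \cite[Theorem 10.1]{oh-park}, essentially by exponentiating the inner derivation $\{\Gamma,\cdot\}_\Omega$. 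Exactness of the difference is what makes the construction canonical and sidesteps the conformal factor entirely.

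Your route founders precisely on the point you identify as ``the main obstacle.'' Since $\phi^*\omega_{\Pi'} = e^f\omega_\Pi$ with $f$ not identically zero off the zero section, the lift $T[1]\phi$ relates the two odd symplectic structures (each induced from the corresponding $\omega_U$) only conformally, and the asserted intertwining $(T[1]\phi)_*Q_b^\Pi = Q_b^{\Pi'}$ does not hold on the nose: the super-Hamiltonian $H^*$ is built from the bivector inverse to $\omega_U$, so the rescaling contributes terms in $f$ and $df$ at every order of the fiber expansion. Conformal invariance of the Maurer--Cartan \emph{solution set} (maximal degeneracy of $\omega_G(s)$ is conformally invariant) does not imply that the operations $\frak m_\ell^b$ themselves are unchanged, and it is exactly this that the theorem asserts; you state that it ``needs to be verified'' but supply no argument. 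A secondary unjustified step: the Moser vector field of Proposition \ref{diffeo} solves $\sigma + \xi_t\rfloor\widetilde\omega_t = 0$ and has no reason to preserve the fibration $\pi\colon E^*\to Y$, so the claims that $\phi$ preserves $\pi$ and that $T[1]\phi$ carries $\L=T\FF[1]$ to itself also require justification. The repair is to drop the diffeomorphism and argue directly from the exactness of $\omega_\Pi^b-\omega_{\Pi_0}^b$, as the paper does.
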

\begin{proof}
We start with the expression of the l.c.s. form $\omega_U$
$$
\omega_U = \pi_Y^*\omega - d\theta_G - \pi_Y^* b \wedge \theta_G
$$
given in (\ref{eq:omega*}) that is canonically constructed on a
neighborhood $U$ of the zero section $E^* = T^*\FF$ when a
splitting $\Pi: TY = G\oplus T\FF$ is provided. To highlight
dependence on the splitting, we denote by $\theta_\Pi$ and
$\omega_\Pi^b$ the one-form $\theta_G$ and the l.c.s. form
$\omega_U$. We will also denote by $\delta_\Pi^b$ the $\delta: EC[1]
\to EC[1]$ corresponding to the splitting $\Pi$.
Here we would like to emphasize that the one-form $\theta_G$
depends only on the splitting $\Pi$ but not depend on the
one-form $b$.

Then for a given splitting $\Pi_0$, we have
\begin{equation}\label{eq:difference}
\omega_\Pi^b - \omega_{\Pi_0}^b = d^{\pi_Y^*b} (\theta_{\Pi_0} - \theta_\Pi).
\end{equation}
In the super language, this is translated into
\begin{equation}\label{eq:super-difference}
H_{\Pi}^b - H_{\Pi_0}^b
= \{H_{\Pi_0}^b, \Gamma\}_{\Omega}
= - \{\Gamma, H_{\Pi_0}^b \}_{\Omega}
\end{equation}
where $\Gamma$ is the function associated to the one-form
$\theta_{\Pi_0} - \theta_\Pi$ which has $deg'(\Gamma) = 0$ (or
equivalently has $deg(\Gamma) = 1$). This function does \emph{not}
depend on $b$. The last identity
comes from the super-commutativity of the bracket and
the fact that $deg(H_{\Pi_0}^b) = 2$ and $deg(\Gamma) =1$.
Once we have established these, the rest of the proof is the same as
that of Theorem 10.1 \cite{oh-park} and so omitted, referring the readers
thereto.
\end{proof}

This theorem then associates a canonical ($L_\infty$-)isomorphism
class of strong homotopy Lie algebras to each l.c.p-s
manifold $(Y,\omega,b)$ and so to each coisotropic submanifold
of l.c.s manifold $(X,\omega_X,\frak b_X)$. As in the symplectic case, it is obvious
from the construction that pre-Hamiltonian diffeomorphisms induce
canonical isomorphism by pull-backs in our strong homotopy Lie
algebroids.

In the point of view of coisotropic embeddings in l.c.s manifolds, this theorem implies
that our strong homotopy Lie algebroids for two
Hamiltonian isotopic coisotropic submanifolds are canonically
isomorphic and so the isomorphism class of the strong homotopy Lie
algebroids is an invariant of coisotropic submanifolds modulo the
Hamiltonian isotopy as in the symplectic case \cite{oh-park}.
(See the relevant discussion in section \ref{sec:bulk} on the general bulk deformations of
coisotropic submanifolds. According to the definitions therein,
Hamiltonian deformations of coisotropic submanifolds correspond to
equivalent bulk deformations.)
This enables us to study the moduli problem of deformations of
l.c.p-s. structures on $Y$ in the similar way as done in \cite{oh-park}.
Up until now, most of our discussions correspond to the l.c.s. analogues of the
deformation theory developed in \cite{oh-park} in the symplectic context.
This effort will finally pay off when we study the moduli problems of
coisotropic submanifolds and its obstruction-deformation theory.
We will be particulary interested in the deformation problems of
Zambon's example in this enlarged categorical setting of
conformally symplectic manifolds.

\section{Moduli problem and the Kuranishi map}
\label{sec:moduli}

In this section, we write down the defining equation
(\ref{eq:masterincoord}) for the graph $\operatorname{Graph}s
\subset TU \subset TE^*$ to be coisotropic in a formal
neighborhood, i.e., in terms of the power series of the section
$s$ with respect to the fiber coordinates in $U$ and study them using (\ref{eq:main}).
Using  the concept of gauge equivalence of the solutions of a  Maurer-Cartan
equation  in \cite[4.3]{fooo} we will study the moduli problem of the
Maurer-Cartan equation (\ref{eq:MC}) of $\frak l^\infty_{(Y,\omega,b)}$.

First we state the following $b$-deformed analogue of
Theorem 11.1 \cite{oh-park} whose proof is the same as that of the latter and
so omitted.

\begin{thm} The equation of the formal power series
solutions $\Gamma \in \frak l^1$
of $(\ref{eq:masterincoord})$
is given by
\begin{equation}\label{eq:MC}
\sum_{\ell = 1}^\infty \frac{1}{\ell !}
\frak m_\ell^b(\Gamma,\cdots, \Gamma) = 0 \quad \mbox{on } \,
\Omega^2(\FF)
\end{equation}
where
\begin{equation}
\Gamma = \sum_{k = 1}^\infty \e^k \Gamma_k \label{eq:Gammapower}
\end{equation}
where $\Gamma_k$'s are sections of $T^*\FF$ and $\e$ is a formal
parameter.
\end{thm}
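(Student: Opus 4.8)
The plan is to follow the proof of Theorem 11.1 in \cite{oh-park} essentially verbatim, the only structural change being that the ordinary covariant derivative $\nabla$ is everywhere replaced by its $b$-deformed version $\nabla^b$ of (\ref{eq:nablabs}), and the maps $\frak m_\ell$ by the maps $\frak m_\ell^b$ of Theorem \ref{algebroid}. First I identify a section $s: Y \to U \subset E^*$ with the associated element $\Gamma = \sum_{k\geq 1}\e^k\Gamma_k \in \frak l^1 = \Omega^1(\FF) = \Gamma(E^*)$, regarded as a degree-$0$ element of the shifted complex $\frak l[1]^\bullet$; this is precisely the feature that makes the substitution $\xi_1 = \cdots = \xi_\ell = \Gamma$ into the graded-symmetric maps $\frak m_\ell^b$ legitimate and sign-free. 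The goal is then to show that the coordinate master equation (\ref{eq:masterincoord}) of Theorem \ref{thm:2}, read as an identity in $\Omega^2(\FF)$, is term-by-term equal to the Maurer--Cartan equation (\ref{eq:MC}).

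The computational core is the expansion of the inverse of the deformed coefficient matrix $\widetilde\omega_{ij} = \omega_{ij} - s_\beta F^\beta_{ij}$ as a Neumann series in $s$,
$$
\widetilde\omega^{ij}
= \omega^{ij}
+ \sum_{r\geq 1}\bigl(\omega^{-1}(sF)\,\omega^{-1}\cdots(sF)\,\omega^{-1}\bigr)^{ij},
$$
where the $r$-th summand carries exactly $r$ insertions of $sF$ and converges as a formal power series once $s$ is $C^0$-small (the invertibility of $\widetilde\omega_{ij}$ for small $s$ was already noted before Theorem \ref{thm:2}). Substituting this into the left-hand side $\tfrac12(\nabla_i s_\alpha\,\widetilde\omega^{ij}\,\nabla_j s_\beta)\,f_\alpha^*\wedge f_\beta^*$ and matching against the definitions, I expect the following correspondences: the right-hand side $(\nabla_\beta s_\alpha)\,f_\alpha^*\wedge f_\beta^*$ is, up to sign, $\frak m_1^b(\Gamma) = \pm d_\FF^b\Gamma$ by (\ref{eq:b-lin}) and the coordinate formula (\ref{eq:nablaincoor}); the $r=0$ term of the left-hand side is $\tfrac12\frak m_2^b(\Gamma,\Gamma)$ by the definition (\ref{eq:b-quad}) of the transverse bracket; and each $r$-fold insertion $\omega^{-1}(sF)\cdots\omega^{-1}$ reproduces exactly $r$ copies of the contraction $F^\#\rfloor\Gamma$ occurring in the higher map $\frak m_{r+2}^b$ of (\ref{eq:mell}), once one uses $F^\# = F\omega^{-1}$ from (\ref{eq:Fsharp}). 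In this way the entire left-hand side reassembles as $\sum_{\ell\geq 2}\tfrac{1}{\ell!}\frak m_\ell^b(\Gamma,\cdots,\Gamma)$.

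The step I expect to demand the most care is the bookkeeping of combinatorial factors and signs. Because all $\ell$ arguments equal the single element $\Gamma$ of degree $0$ in $\frak l[1]$, every Koszul sign $(-1)^{|\sigma|}$ in (\ref{eq:mell}) is trivial and the sum over $\sigma \in S_\ell$ collapses to $\ell!$ identical contributions; this multiplicity is exactly what cancels the weight $\tfrac{1}{\ell!}$ in (\ref{eq:MC}) and leaves a single Neumann-series term carrying the coefficient $\tfrac12$ absorbed into the bracket $\langle\cdot,\cdot\rangle_\omega$. One must still reconcile the suspension sign conventions $\frak m_1^b = (-1)^{|\xi|}d_\FF^b$ and $\frak m_2^b = (-1)^{|\xi_1|(|\xi_2|+1)}\{\cdot,\cdot\}_\Pi^b$ so that the inhomogeneous equality ``left-hand side $=$ right-hand side'' of (\ref{eq:masterincoord}) is converted into the homogeneous Maurer--Cartan form ``$\sum = 0$'' of (\ref{eq:MC}). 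Since the $b$-dependence enters only through the uniform replacement of $\nabla$ by $\nabla^b$ and does not interfere with the matrix-inverse expansion, once these signs and multiplicities are checked the identification is formally identical to the symplectic computation of \cite{oh-park}, and the theorem follows.
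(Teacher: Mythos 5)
Your proposal is correct and follows exactly the route the paper intends: the paper omits the proof entirely, stating only that it is the same as that of Theorem 11.1 of \cite{oh-park}, and your outline --- identifying $s$ with $\Gamma\in\frak l^1$, expanding $\widetilde\omega^{ij}=(\omega_{ij}-s_\beta F^\beta_{ij})^{-1}$ as a Neumann series whose $r$-fold $F^\#\rfloor\Gamma$ insertions reassemble into $\frak m_{r+2}^b$, and cancelling the $S_\ell$-multiplicity against $1/\ell!$ --- is precisely the content of that argument with $\nabla$ replaced by $\nabla^b$ and $\frak m_\ell$ by $\frak m_\ell^b$. The sign and combinatorial bookkeeping you flag is indeed the only delicate point, and it is unaffected by the $b$-deformation, so your proof is a faithful (and more explicit) rendering of the paper's.
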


As in \cite[Remark 11.1]{oh-park} it is possible to interpret (\ref{eq:MC}) as the
condition for the gauge changed weak (or curved) $L_\infty$-structure to
define a (strong) $L_\infty$-structure, i.e., $\frak m^{b,\Gamma}_0 = 0$
or the Maurer-Cartan equation  $\widehat{\frak m}^{b,\Gamma}_1 \circ \widehat{\frak m}^{b,\Gamma}_1 = 0$
for the deformation problem of the corresponding
l.c.p-s. structure $(Y,\omega,b)$. In what follows,  we will use (\ref{eq:main})
to study a formal solution of (\ref{eq:MC}). This description seems to be more
suitable for the study of $C^\infty$ Maurer-Cartan equation which we hope to
pursue in a sequel to the present paper.

By Theorem \ref{thm:coisotropic}, using  (\ref{eq:somegag}),  a solution  $\Gamma$   of (\ref{eq:masterincoord})
is  also  a solution of  (\ref{eq:main}), and therefore
a formal solution  of  (\ref{eq:masterincoord})  is  also  a
formal solution of (\ref{eq:main}). Let us plug  a  formal solution
 $\Gamma$, identified with $p_G ^* \Gamma$, into (\ref{eq:main}),  denoting  $a_i := - d^b (\Gamma_i)$
 and expanding $(\omega- d^b ( \Gamma)) ^{k +1}$.
Noting  that $a_i$ and $\omega ^p$ are   differential forms of even degree,
we abbreviate   wedge products  of them as usual products. As a result we obtain
\begin{equation}
\sum_{N = 0}^\infty \epsilon ^N
 \sum_{\begin{array}{l}
1\le i_1 < \cdots  \cdots  < i_p, \\
1\le s_i,\\
 s_1 + \cdots + s_p \le (k+1)\\
 i_1 s_1 + \cdots  + i_p s_p = N
 \end{array} }
 a_{i_1} ^{s_1} \cdots  a_{i_p} ^{s_p} \binom{k +1} {s_1}\cdots \binom{k+1}{s_p}
 \omega ^{k+1 - s_1 \cdots - s_p}(y) = 0.\nonumber 
\end{equation}

Consequently  for each $N \ge 0$ we have
\begin{equation}
0 = a_{N}(k+1)\omega ^k +\nonumber
\end{equation}

\begin{equation}
+\sum_{\begin{array}{l}
 1\le i_1 < \cdots  \cdots  < i_p\le N-1, \\
 1\le s_i,\\
  s_1 + \cdots  + s_p \le (k+1)\\
 i_1 s_1 + \cdots  + i_p s_p = N
 \end{array}}
 a_{i_1} ^{s_1} \cdots  a_{i_p} ^{s_p} \binom{k +1} {s_1}\cdots \binom{k+1}{s_p} \omega ^{k+1 - s_1 \cdots - s_p}(y).\label{eq:Nsum}
\end{equation}

Note that the number $p$ entering in (\ref{eq:Nsum})  is bounded by  $(k+1)$.

Let us examine  (\ref{eq:Nsum})  for small numbers  $N$.
For $N = 0$   the corresponding term in (\ref{eq:Nsum}) is $\omega ^{k +1}(y) = 0$.

For $N = 1$ the corresponding values in  (\ref{eq:Nsum})  are  $p =1 =i _1 =  s_1$  and the corresponding term is
$$  \omega ^{k} a_1(y) = 0$$
which is  equivalent to
$$  d^{\bar b}_{\FF} \Gamma_1(y)= 0.$$ (See \eqref{eq:sglin}, \eqref{eq:mainlin}.)
So $\Gamma_1$ is a solution of the linearized equation, which is assumed to be given.

For $N = 2$ the corresponding values in (\ref{eq:Nsum})  are $p =1$,
$ i_1 = 1 , s_1 =2$ or $i_1 = 2, s_1 = 1$. The equation (\ref{eq:Nsum}) in this case has the following form
\begin{equation}
 (k+1) \omega^{k} a_2 + \binom{k+1}{2}\omega ^{k-1} a_1 ^2   = 0, \nonumber 
\end{equation}
which is equivalent to
\begin{eqnarray}
- \omega ^k (d ^{\bar b} _\FF \Gamma_2) = {k \over 2} \omega ^{k-1} a_1 ^2, \nonumber \\
\Leftrightarrow - d^{\bar b} _\FF \Gamma_2 = {k\over 2}P_\omega \rfloor (\omega ^{k-1} a_1 ^2), \nonumber\\
\Leftrightarrow  - \omega \wedge d^{\bar b}_\FF \Gamma _2 ={k\over 2}a_1^2, \nonumber\\
\Leftrightarrow  - d^{\bar b} _\FF \Gamma_2 = {1\over 2}P_\omega \rfloor (a_1 ^2), \label {N=2gb}
\end{eqnarray}
where
$P_\omega$ is  the bi-vector in $\Lambda ^2 G$ dual to the restriction of $\omega$ to $G$,
so $\omega (P_\omega) = k$.

We note that  the RHS of (\ref{N=2gb}) is ${1\over 2}\frak m_2 ^b (\Gamma_1, \Gamma_1)$ (cf. (\ref{eq:b-quad})).
Since $\frak m_1^b$ is a derivation of  $\frak m_2^b$  the map
$$ \Omega ^1 (\FF) \times \Omega ^1 (\FF) \to \Omega^2(\FF), \,
(\Gamma_1, \Gamma_2) \mapsto  {1\over 2}P_\omega \rfloor (d^b \Gamma _1\wedge d^b \Gamma_2)$$
induces
the  Kuranishi-Gerstenhaber   bracket
$$KG: H^1_b (Y, \omega) \times H^1_b (Y, \omega) \to H^2_b (Y, \omega), \,
([\Gamma_1], [\Gamma_2] ) \mapsto {1\over 2} [\frak m_2^b (\Gamma_1, \Gamma_2)].$$
Since $\frak m_2^b$ is symmetric,  the Kuranishi-Gerstenhaben  bracket is defined by the Kuranishi map \cite{oh-park}
$$
Kr: H^1_b (Y, \omega) \to H^2_b (Y, \omega), \, [\Gamma_1] \mapsto [\frak m_2^b (\Gamma_1, \Gamma_1)].
$$
\begin{cor}\label{cor:Kr} (cf. \cite[Corollary11.5]{oh-park}).
The moduli problem is formally unobstructed only if $Kr$ vanishes.
\end{cor}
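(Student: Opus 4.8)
The plan is to read off the leading obstruction directly from the order-by-order expansion (\ref{eq:Nsum}) of the master equation and to recognize it as the value of the Kuranishi map $Kr$. Recall first what formal unobstructedness of the moduli problem means: every solution $\Gamma_1$ of the linearized equation (\ref{eq:mainlin}), that is, every $d^{\bar b}_\FF$-cocycle in $\Omega^1(\FF)$, extends to a formal power series solution $\Gamma = \sum_{k=1}^\infty \e^k \Gamma_k$ of the Maurer--Cartan equation (\ref{eq:MC}), equivalently of (\ref{eq:main}).

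The heart of the argument has in fact already been carried out in (\ref{N=2gb}). First I would fix a class $[\Gamma_1] \in H^1_b(Y,\omega)$ together with a cocycle representative $\Gamma_1$. Assuming the problem is unobstructed, this $\Gamma_1$ extends; in particular there must exist a section $\Gamma_2$ whose second-order coefficient solves the $N=2$ component of (\ref{eq:Nsum}), which by (\ref{N=2gb}) is
\[
-\, d^{\bar b}_\FF \Gamma_2 = \tfrac{1}{2}\, P_\omega \rfloor (a_1^2) = \tfrac{1}{2}\,\frak m_2^b(\Gamma_1,\Gamma_1).
\]
Thus the mere existence of $\Gamma_2$ forces $\frak m_2^b(\Gamma_1,\Gamma_1)$ to lie in the image of $d^{\bar b}_\FF$, hence to be $d^{\bar b}_\FF$-exact, so that its class in $H^2_b(Y,\omega)$ vanishes. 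Since by definition $Kr([\Gamma_1]) = [\frak m_2^b(\Gamma_1,\Gamma_1)]$, I conclude $Kr([\Gamma_1]) = 0$; as $[\Gamma_1]$ was arbitrary, $Kr \equiv 0$, which is the assertion (equivalently, in contrapositive form, $Kr([\Gamma_1]) \neq 0$ obstructs the extension of $\Gamma_1$ past second order).

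The one point I would check carefully is that $Kr$ is genuinely well defined on cohomology, i.e.\ that replacing $\Gamma_1$ by $\Gamma_1 + d^{\bar b}_\FF g$ leaves $[\frak m_2^b(\Gamma_1,\Gamma_1)]$ unchanged; this is where the $L_\infty$ structure of Theorem \ref{algebroid} is used, namely that $\frak m_1^b = \pm d^{\bar b}_\FF$ is a derivation of the bracket $\frak m_2^b$, which together with the symmetry of $\frak m_2^b$ makes the quadratic map descend to $H^1_b(Y,\omega)$. I do not anticipate any genuine difficulty beyond this bookkeeping, since the substantive step --- identifying the $N=2$ term of the master equation with $\frak m_2^b(\Gamma_1,\Gamma_1)$ --- is already established in the passage preceding the corollary.
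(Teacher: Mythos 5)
Your argument is correct and coincides with the paper's own (essentially implicit) proof: the corollary is read off from the $N=2$ term \eqref{N=2gb} of the expansion \eqref{eq:Nsum}, which shows that the existence of $\Gamma_2$ forces $\frak m_2^b(\Gamma_1,\Gamma_1)$ to be $d^{\bar b}_\FF$-exact, i.e.\ $Kr([\Gamma_1])=0$. Your added check that $Kr$ descends to $H^1_b(Y,\omega)$ via the derivation property of $\frak m_1^b$ is exactly the justification the paper invokes in the sentence preceding the corollary, so nothing is missing.
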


The following Theorem  is a $b$-deformed analog of
\cite[Theorem 11.2]{oh-park} derived in the symplectic case, so we omit  its proof.

\begin{thm}\label{thm:formal} Let $\FF$ be the null foliation of $(Y,\omega,b)$
and $\frak l = \oplus_{\ell =1}^{n-k} \frak l^{\ell}$ be the
associated complex. Suppose that $H^2_b(Y,\omega) = \{0\}$. 
Then for any given class
$\alpha \in H^1_b(Y,\omega)$, (\ref{eq:MC}) has a solution $\Gamma =
\sum_{k=1}^\infty \e^k \Gamma_k$ such that $d_\FF^b(\Gamma_1) =0$
and $[\Gamma_1] = \alpha \in H^1_b(Y,\omega)$. In other words, the
formal moduli problem is unobstructed.
\end{thm}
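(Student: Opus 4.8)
The plan is to solve the Maurer--Cartan equation (\ref{eq:MC}) order by order in the formal parameter $\e$, using the hypothesis $H^2_b(Y,\omega)=\{0\}$ to guarantee that nothing obstructs the passage from one order to the next. Writing $\Gamma=\sum_{k\ge1}\e^k\Gamma_k$ and extracting the coefficient of $\e^N$ from (\ref{eq:MC}), the equation at order $N$ reads
\[
\frak m^b_1(\Gamma_N) = -\,\mathrm{Obs}_N \quad \text{on } \Omega^2(\FF),
\qquad
\mathrm{Obs}_N := \sum_{\ell\ge2}\frac{1}{\ell!}\sum_{\substack{k_1+\cdots+k_\ell=N\\ k_i\ge1}}\frak m^b_\ell(\Gamma_{k_1},\dots,\Gamma_{k_\ell}).
\]
Since every $k_i\le N-1$ in the inner sum, $\mathrm{Obs}_N$ depends only on $\Gamma_1,\dots,\Gamma_{N-1}$; recalling $\frak m^b_1=(-1)^{|\cdot|}d^{\bar b}_\FF$, the order-$N$ equation is exactly a $d^{\bar b}_\FF$-primitive problem for the leafwise $2$-form $\mathrm{Obs}_N$. (For $N=2$ this is precisely (\ref{N=2gb}), with $\mathrm{Obs}_2=\tfrac12\frak m^b_2(\Gamma_1,\Gamma_1)$.) I would run an induction on $N$.

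For the base case $N=1$, equation (\ref{eq:Nsum}) collapses to the linearized equation $d^{\bar b}_\FF\Gamma_1=0$ (cf. (\ref{eq:mainlin})), so I would simply take $\Gamma_1$ to be any $d^{\bar b}_\FF$-closed representative of the prescribed class $\alpha\in H^1_b(Y,\omega)$, which exists by definition of the cohomology. Assuming inductively that $\Gamma_1,\dots,\Gamma_{N-1}$ have been constructed so that (\ref{eq:MC}) holds modulo $\e^N$, the task is to produce $\Gamma_N\in\Omega^1(\FF)$ solving $\frak m^b_1(\Gamma_N)=-\mathrm{Obs}_N$.

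The hard part will be verifying that the source $\mathrm{Obs}_N$ is $d^{\bar b}_\FF$-closed, so that it represents a class in $H^2_{\bar b}(\FF)=H^2_b(Y,\omega)$. This is the standard ``the obstruction is a cocycle'' step of $L_\infty$-deformation theory, which I would deduce from the $L_\infty$-relations among $\{\frak m^b_\ell\}$ established in Theorem \ref{algebroid} (equivalently from $\delta^b\delta^b=0$, i.e. the identity $\{H^*_b,H^*_b\}=0$ of Proposition \ref{prop:H*bH*b}). Setting $\Gamma^{(N-1)}=\sum_{k=1}^{N-1}\e^k\Gamma_k$ and $\mathcal F(\Gamma^{(N-1)})=\sum_{\ell\ge1}\tfrac{1}{\ell!}\frak m^b_\ell((\Gamma^{(N-1)})^{\otimes\ell})$, the inductive hypothesis gives $\mathcal F(\Gamma^{(N-1)})=\e^N\mathrm{Obs}_N+O(\e^{N+1})$. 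Feeding this into the $L_\infty$ Bianchi identity
\[
\sum_{\ell\ge0}\frac{1}{\ell!}\,\frak m^b_{\ell+1}\big(\mathcal F(\Gamma^{(N-1)}),\underbrace{\Gamma^{(N-1)},\dots,\Gamma^{(N-1)}}_{\ell}\big)=0,
\]
the terms with $\ell\ge1$ are $O(\e^{N+1})$, so reading off the coefficient of $\e^N$ leaves exactly $\frak m^b_1(\mathrm{Obs}_N)=0$, that is $d^{\bar b}_\FF\mathrm{Obs}_N=0$. The only genuine care needed here is tracking the Koszul signs carried by the $\frak m^b_\ell$ in (\ref{eq:mell}).

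Granting closedness, the hypothesis $H^2_b(Y,\omega)=\{0\}$---which also makes the Kuranishi obstruction of Corollary \ref{cor:Kr} vanish automatically---forces $\mathrm{Obs}_N$ to be $d^{\bar b}_\FF$-exact, so a primitive $\Gamma_N$ exists, obtained explicitly by applying $P_\omega\rfloor$ as in the passage to the last line of (\ref{N=2gb}); this closes the induction. Assembling the $\Gamma_N$ into $\Gamma=\sum_{k\ge1}\e^k\Gamma_k$ then yields a formal solution of (\ref{eq:MC}) with $d^b_\FF(\Gamma_1)=0$ and $[\Gamma_1]=\alpha$, establishing that the formal moduli problem is unobstructed. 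Since every operator above is the $b$-deformation of its symplectic counterpart, with $d^{\bar b}_\FF$ in place of the leafwise de Rham differential and the Novikov-type groups $H^\bullet_{\bar b}(\FF)$ in place of ordinary cohomology, the sign verifications and cancellations can be taken over essentially verbatim from \cite[Theorem 11.2]{oh-park}.
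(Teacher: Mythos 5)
Your proposal is correct and is essentially the argument the paper intends: the paper omits the proof of Theorem \ref{thm:formal}, deferring to \cite[Theorem 11.2]{oh-park}, and that proof is exactly the order-by-order induction you describe --- solve $\frak m_1^b(\Gamma_N)=-\mathrm{Obs}_N$ at each order, check that $\mathrm{Obs}_N$ is $d^{\bar b}_\FF$-closed via the $L_\infty$-relations of Theorem \ref{algebroid}, and use $H^2_b(Y,\omega)=0$ to find a primitive. The only cosmetic slip is your remark that the primitive is ``obtained by applying $P_\omega\rfloor$'': that contraction is the device for converting \eqref{eq:Nsum} into the Maurer--Cartan form \eqref{eq:MC} (as in \eqref{N=2gb}), whereas the existence of $\Gamma_N$ itself comes solely from exactness; this does not affect the validity of the argument.
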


In general, we say that an element $\Gamma_1\in\ker  d^b_\FF \cap \Omega^1 (\FF)$ is
{\it formally unobstructed},
if there exists a formal  solution to (\ref{eq:MC}) whose  first summand $\Gamma_1$ is the
given one. Similarly, $\Gamma_1\in \ker  d^b_\FF\cap \Omega^1 (\FF) $ is  called
{\it smoothly unobstructed}, if it is  tangent  to a curve of smooth coisotropic deformations.

Note that  Hamiltonian  diffeomeorphisms on $U$ that are close to the identity  act on the space of  formal deformations
by acting on each summand  $\Gamma_l$ in (\ref{eq:Gammapower}). (If a diffeomorphism $\phi: U \to U$ is close to the identity and $\Gamma: Y \to U\subset G^\circ $ is a section, then
the composition $\pi\circ \phi \circ \Gamma: Y \to Y$ is a diffeomorphism, hence there exists a diffeomorphism $g\in Diff (Y)$ such that
$\phi \circ \Gamma \circ g: Y \to U \subset G^\circ $ is a section.)  They also act on
the  space of smooth  coisotropic deformations
by an obvious way.
The following Lemma is straightforward, so we omit its proof.

\begin{lem}\label{lem:ham}  Given a function $f \in C^\infty (Y)$
and an element $a \in \Omega ^1 (\FF)\cap \ker d^b_\FF$ the following   assertions hold:
\begin{enumerate}
\item $a$ is formally unobstructed, if and only if   $a + d^{\bar b} _\FF  f$ is  formally
unobstructed.
\item $a$ is  tangent to  a curve  $\gamma(t)$ of coisotropic deformations  of $Y$ in $(U, \omega_G, \pi ^b)$
then $a + d^{\bar b}_\FF f$ is tangent to the curve $\phi_t \circ \gamma_t$, where $\phi_t$'s are Hamiltonian
diffeomorphisms  close  to the identity on  $(U, \omega _G, \pi^* b)$  and generated by an extension of  Hamiltonian $f$  to $U$.
\end{enumerate}
\end{lem}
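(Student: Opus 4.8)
The plan is to recognize that passing from $a$ to $a + d^{\bar b}_\FF f$ is exactly the infinitesimal Hamiltonian move already identified in Lemma \ref{ham=coh}, and then to lift this infinitesimal equivalence to the formal level for part (1) and to the smooth level for part (2).

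For part (1), I would appeal to the gauge equivalence of Maurer--Cartan solutions of $L_\infty$-algebras from \cite[Section 4.3]{fooo}. The function $f$ is a degree-zero element of $\frak l^0 = \Omega^0(\FF) = C^\infty(Y)$, and the gauge flow it generates carries formal solutions of \eqref{eq:MC} to formal solutions. Starting from a formal solution $\Gamma = \sum_k \e^k \Gamma_k$ with $\Gamma_1 = a$, the gauge-transformed series is again a formal solution whose leading term is $a + \frak m_1^b(f) = a \pm d^{\bar b}_\FF f$; since the flow generated by $-f$ provides the inverse transformation, this yields the claimed equivalence of formal unobstructedness, the sign being immaterial because the assertion is symmetric under $f \mapsto -f$.

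For part (2), I would realize the same move geometrically. Let $\widehat f$ be the Hamiltonian extension of $f$ to $U$ furnished by (the proof of) Theorem \ref{extlcps}, and let $\phi_t$ be the flow of its Hamiltonian vector field $X_{\widehat f} := (d^{\pi^*b}\widehat f)_{\#\omega_G}$, which exists for small $t$ by compactness of $Y$. Since Hamiltonian diffeomorphisms of $(U,\omega_G,\pi^*b)$ send coisotropic submanifolds to coisotropic submanifolds (as recorded after Definition \ref{def:lcse}), the family $\phi_t \circ \gamma(t)$ is again coisotropic, and composing with a suitable family $g_t \in \mathrm{Diff}(Y)$ as in the parenthetical remark preceding the lemma turns it into a genuine curve of coisotropic sections. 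Differentiating at $t=0$ and using $\gamma(0) = o_{E^*}$ together with $\dot\gamma(0) = a$, the tangent vector is $X_{\widehat f}|_Y + a$; by formula \eqref{eq:vertical} the vertical component of $X_{\widehat f}|_Y$ is precisely $d^{\bar b}_\FF f$, so the induced deformation direction is $a + d^{\bar b}_\FF f$.

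The one point requiring care---and the reason the lemma is merely \emph{straightforward} rather than trivial---is the bookkeeping of the vertical/horizontal splitting under the reparametrizing diffeomorphisms $g_t$. The hard part is to confirm that the correction needed to turn $\phi_t \circ \gamma(t)$ back into a section is purely horizontal, hence invisible to the vertical (fiber) component $E^* \cong VTU$ in which both $a$ and $d^{\bar b}_\FF f$ live. This holds because $g_t$ acts by precomposition in the base direction only, so the induced infinitesimal deformation is exactly $a + d^{\bar b}_\FF f$ and not merely a cohomologous representative.
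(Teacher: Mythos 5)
Your proof is correct; note that the paper itself omits the proof of this lemma (declaring it straightforward), so what you have written is a legitimate filling-in rather than a parallel to an existing argument. Part (2) follows exactly the route the paper sets up in the paragraph preceding the lemma: the flow of the extended Hamiltonian, the reparametrization $g_t$ affecting only the $TY$-summand of $TE^*|_Y = TY \oplus E^*$, and formula \eqref{eq:vertical} identifying the vertical part of the Hamiltonian vector field along $Y$ with $d^{\bar b}_\FF f$; your closing remark that the $g_t$-correction is purely horizontal is the right point to isolate. For part (1) you route the argument through the abstract $L_\infty$ gauge action of the degree-zero element $f \in \frak l^0$, whereas the paper's setup suggests the geometric counterpart: letting the Hamiltonian flow of the extension of $f$ act on a formal solution summand by summand as in \eqref{eq:Gammapower}, which changes $\Gamma_1$ by $d^{\bar b}_\FF f$ again by \eqref{eq:vertical} and keeps the series a solution of \eqref{eq:MC} because coisotropy is preserved. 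The two are equivalent; your algebraic version buys independence from the geometric thickening but leans on the fact that gauge transformations preserve Maurer--Cartan solutions (\cite[Section 4.3]{fooo}), and it is worth noting that only the order-$\e$ gauge parameter $\e f$ is used, so the corrections $\frak m_{k+1}(f,\Gamma,\dots,\Gamma)$ for $k\ge 1$ land in order $\e^2$ and higher, leaving the leading term exactly $a + \frak m_1^b(f) = a + d^{\bar b}_\FF f$. The symmetry under $f \mapsto -f$ disposes of both the sign and the converse direction, as you say.
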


Now  we study the moduli of the solution of the Maurer-Cartan equation
under the action induced by   Hamiltonian  diffeomorphisms on  a neighborhood
$(U, \omega_U, \pi^*b)$ of $Y$, taking into account Theorem \ref{extlcps}.
Here
we follow  the ideology in \cite[4.3]{fooo}. First we need introduce the notion of
a model of the product of  $[0,1]$ with  an $L_\infty$-algebra  $(C[1], \frak m)$,  which
is also a $L_\infty$-algebra, imitating  the analogous notion for  $A_\infty$-algebras,
introduced in \cite[4.2]{fooo}.

\begin{defn}\label{def:model} (cf. \cite[Definition 4.2.1]{fooo})
An $L_\infty$-algebra $(\bar C[1], \bar{\frak m})$  together
with  $L_\infty$-homomorphisms
$$Incl: EC[1] \to \bar C[1], \: Eval_{s=0} : E\bar C[1] \to C, \:  Eval_{s=1}: E\bar C \to C$$
is said to be {\it a model of  $[0,1] \times (C[1], \frak{m})$}, if the following  holds:
\begin{enumerate}
\item $Incl:E_kC[1] \to \bar C[1]$ is zero unless $k = 1$ The same holds for $Eval_{s=0}$ and $Eval_{s=1}$.
\item  $Eval_{s=0}\circ Incl = Eval_{s=1}\circ Incl = $ identity.
\item $Incl_1: (C[1], \frak m_1)  \to (\bar C[1], \bar{\frak m})$ is a cochain homotopy equivalence and
$(Eval_{s=0})_1, \, (Eval_{s=1})_1: (\bar C[1], \bar{\frak m}_1) \to (C[1], \frak m_1)$
are cochain homotopy equivalences.
\item The (cochain)  homomorphism
$(Eval_{s=0})_1 \oplus (Eval_{s=1})_1 : \bar C[1] \to C[1] \oplus C[1]$ is surjective.
\end{enumerate}
\end{defn}

\begin{defn}[Maurer-Cartan moduli space] (cf. \cite[Definition 4.3.1]{fooo})  We say that two solutions
$\Gamma_1$, $\Gamma_2$ of \eqref{eq:MC} are gauge-equivalent if  there exist a model
$(\bar C [1], \bar {\frak m})$    of $[0,1] \times (C[1], \frak{m})$
 and a  solution  $\tilde \Gamma$ of the Maurer-Cartan equation of  $(\bar C [1], \bar {\frak m})$  such that
$ Eval_{s=0 *}(\tilde \Gamma) = \Gamma_1, \, Eval_{s=1 *}(\tilde \Gamma) = \Gamma_2$.
 We say such a $\tilde \Gamma$ {\it  a homotopy}  from $\Gamma_1$ to $\Gamma_2$.
We denote by $\mathfrak M_{formal}(Y,\omega,b)$ the set of gauge equivalence classes of
Maurer-Cartan solutions.
\end{defn}

Note that any Hamiltonian diffeomorphisms $\phi_t$, $\phi_0 = Id$, on $(U, \omega_U,  d ^{\pi^*b})$ which are close to the identity,  provides a homotopy between a formal solution $\Gamma$ to (\ref{eq:MC}).
Denote by ${\mathfrak M}_{Ham}(Y,\omega,b)$ the set of Hamiltonian equivalent classes  of formal coisotropic  deformations of $Y$ in $U$.  Using the argument of the proof  Theorem \ref{thm:splitting-gauge}
we obtain easily a natural map $\mathfrak M_{Ham}(Y,\omega,b)\to \mathfrak M_{formal}(Y,\omega,b)$.

\begin{rem}\label{rem:hamequi}
Theorem \ref{thm:coisotropic} implies that    there is a one-one correspondence between  of  coisotropic deformations  of   a coisotropic submanifold $Y \subset  (U, \omega_U, \pi^* b)$
and   the set of deformations  $(\omega', b)$ of the l.c.p-s. form  $(\omega, b)$   that are of the same rank as $\omega$ and $\omega '- \omega = d^b s$  for some $s\in \Omega ^1 (\FF)$.
The Hamiltonian  equivalence of the coisotropic deformations induces a ``Hamiltonian" equivalence   on  this set  of l.c.p-s. forms on $(Y, \omega, b)$.
\end{rem}

A deeper analysis on the relationship between the equations (\ref{eq:MC}) and (\ref{eq:Nsum}),
using some ideas in \cite{LV}, will be given in a sequel to the present paper.
In particular, it was raised as a question in \cite{oh-park} whether the  $C^\infty$-analog to
Theorem \ref{thm:formal} holds or not. We hope to study and answer to this question
 in the sequel.

\begin{rem} Among coisotropic deformations of $Y$  there are  special
deformations  respecting the  leaf $\FF$, i.e. those  deformations $\Gamma$
whose  associated null foliation
$\FF$   stay unchanged, or equivalently, $\FF \subset \ker d^b \Gamma$.
For instance, if $Y$ is Lagrangian  all  coisotropic
deformations respect $\FF= Y$. These  deformations form a linear space,
therefore,  they are  smoothly unobstructed. Clearly, they are invariant  under
infinitesimally Hamiltonian  actions.
A particular case  has been considered by Ruan \cite{Ruan}.
\end{rem}

\section{Deformations of l.c.s. structures on $X$}
\label{sec:bulk}

In this section we derive     formulas (\ref{eq:def1a}), (\ref{eq:def2a}) describing the  Zariski tangent  space of  the set $\frak M_{lcs}(X)$ of equivalent classes
of l.c.s. structures on  a manifold $X$.


\begin{defn} We call a smooth one-parameter family $(X,\omega_t,\frak b_t)$ of
l.c.s structures for $-\e \leq t \leq \e$ a \emph{bulk-deformation}.
\end{defn}

Since nondegeneracy is an open condition, we can represent a deformation $\omega_t$ with
$$
\frac{\del \omega_t}{\del t}\Big|_{t=0} = \kappa.
$$
The l.c.s. condition can be written as
\be\label{eq:lcscondition}
\begin{cases}
d\omega_t +  \frak b_t \wedge \omega_t = 0,\\
d\frak b_t = 0.
\end{cases}
\ee
In fact, since we assume $\dim X \geq 4$, $\omega_t$ uniquely determines $\frak b_t$.
So we will focus on the deformation of $\omega_t$.
By differentiating \eqref{eq:lcscondition} with respect to $t$ at $0$, we
obtain
\be\label{eq:att=0}
d\kappa + \frak b_0 \wedge \kappa +\frac{\del \frak b_t}{\del t}\Big|_{t=0}\wedge  \omega_X=0.
\ee
Therefore we immediately derive the following description of Zariski tangent space of
the set of l.c.s. structures.

\begin{lem} Let $(X,\omega_t,\frak b_t)$ be a bulk-deformation of l.c.s. structure on $X$
with $(\omega_0,\frak b_0) = (\omega_X, \frak b)$. Denote
$$
\frac{\del \omega_t}{\del t}\Big|_{t=0} = \kappa,  \quad \frac{\del \frak b_t}{\del t}\Big|_{t=0} = \frak c
$$
Then $(\kappa,\frak c)$ satisfies
\be\label{eq:bulk-MC-derivative}
d^{\frak b}\kappa = - \frak c \wedge \omega_X, \quad d\frak c = 0.
\ee
\end{lem}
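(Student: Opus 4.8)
The plan is to derive both identities in \eqref{eq:bulk-MC-derivative} by differentiating the two defining l.c.s. conditions \eqref{eq:lcscondition} of the family $(X,\omega_t,\frak b_t)$ at $t=0$. Since each condition holds identically in $t$ on the whole interval $[-\e,\e]$, so does its derivative with respect to $t$, and it then suffices to evaluate these $t$-derivatives at $t=0$, where $(\omega_0,\frak b_0)=(\omega_X,\frak b)$.

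First I would differentiate the first line of \eqref{eq:lcscondition}, namely $d\omega_t + \frak b_t\wedge\omega_t = 0$. The exterior derivative $d$ acts only on the manifold variables, so it commutes with $\del/\del t$; combining this with the Leibniz rule for the $t$-derivative of the wedge product $\frak b_t\wedge\omega_t$ gives, at $t=0$,
\[
d\kappa + \frac{\del \frak b_t}{\del t}\Big|_{t=0}\wedge\omega_X + \frak b\wedge\kappa = 0,
\]
which is precisely \eqref{eq:att=0} after substituting $\frak c = \del_t\frak b_t|_{t=0}$ and $\frak b_0=\frak b$. Recalling the definition of the twisted differential $d^{\frak b}\kappa = d\kappa + \frak b\wedge\kappa$ from \eqref{eq:dbomega=0}, this rearranges to $d^{\frak b}\kappa = -\frak c\wedge\omega_X$, the first asserted identity.

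For the second identity I would differentiate the closedness condition $d\frak b_t = 0$ in the second line of \eqref{eq:lcscondition}. Again using that $d$ commutes with $\del/\del t$, evaluation at $t=0$ yields $d\frak c = 0$ immediately. Together these establish the two relations of \eqref{eq:bulk-MC-derivative}.

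The computation is entirely routine, and in fact \eqref{eq:att=0} is already displayed in the excerpt, so the lemma amounts to rewriting \eqref{eq:att=0} in terms of $d^{\frak b}$ and recording the trivial consequence of the closedness of $\frak b_t$. The only points requiring any care are the justification that $d$ and $\del/\del t$ may be interchanged (which follows from the smoothness of the family $(\omega_t,\frak b_t)$ jointly in $t$ and the manifold variables) and the correct bookkeeping of the Leibniz rule and signs when differentiating the wedge product of the one-form $\frak b_t$ with the two-form $\omega_t$. I expect no substantive obstacle here.
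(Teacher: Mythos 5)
Your proposal is correct and follows exactly the route the paper takes: the lemma is obtained by differentiating the two conditions in \eqref{eq:lcscondition} at $t=0$, which is precisely the displayed computation \eqref{eq:att=0} preceding the lemma, rewritten in terms of $d^{\frak b}$. Nothing further is needed.
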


Since  two  l.c.s. forms $ e^{f_t} \omega_t$ and $\omega_t$ are equivalent  for  $f_t\in C^\infty (X)$, two   infinitesimal deformations $(\kappa, \frak c)$ and $(\kappa',\frak c')$ of
$(X, \omega_X, \frak b)$  are equivalent if  there is a  function $f \in C^\infty (X)$ such that
\begin{equation}
\kappa = -f\omega_X +\kappa' \text{ and } \frak c = \frak c'+ df.\label{eq:equi1}
\end{equation}


\begin{defn} We call a pair $(\kappa,\frak c)$ an infinitesimal deformation of
$(X,\omega_X,\frak b)$ when it satisfies
\eqref{eq:bulk-MC-derivative} or equivalently
$$
d^{\frak b} \kappa = - \frak c \wedge \omega_X, \quad d\frak c = 0.
$$
\end{defn}

Now we recall the following from Definition \ref{morphism}

\begin{defn} We say $(X,\omega,\frak b)$ is diffeomorphic  to $(X',\omega', \frak b')$
if there exists a l.c.s. diffeomorphism  $\phi: X \to X'$.
We denote by $\frak{LCS}(X)$ the set of l.c.s. structures on $X$
and $\frak M_{lcs}(X)$   the set of equivalence  classes of l.c.s. structures on $X$.
\end{defn}

The following is the infinitesimal analog to this definition, taking into account (\ref{eq:equi1}).

\begin{defn}\label{def:equi1}
We say two infinitesimal deformations $(\kappa',\frak c')$, $(\kappa, \frak c)$
of $(X,\omega_X, \frak b)$ are {\it equivalent}, if there exist a  vector field $\xi$ of $X$  and a function $f\in C^\infty (X)$ such that
$$
\kappa' = -f\cdot \omega_X + \kappa + \CL_\xi \omega_X, \quad \frak c' =  \frak c + \CL_\xi \frak b + df.
$$
We denote by $\operatorname{Def}(X,\omega_X,\frak b)$  the set of equivalence  classes of
infinitesimal deformations of $(X,\omega_X,\frak b)$.
\end{defn}

By definition, $\operatorname{Def}(X,\omega_X,\frak b)$  is the Zariski (or formal) tangent space of
$\frak{M}_{lcs}(X)$  at $(\omega_X,\frak b)$.



Next, we provide an explicit description of  the Zariski tangent space  $\operatorname{Def}(X,\omega_X,\frak b)$.


\begin{defn} Define a map $S(\omega_X, \frak b): Vect (X) \oplus \R \to   (\ker d^{\frak b} \cap \Omega^2 (X))$  by
$$
S(\omega_X, \frak b) (\xi, c): = d^{\frak b} (\xi \rfloor \omega_X) - c\, \omega_X.
$$
\end{defn}

We divide our description of $\operatorname{Def}(X,\omega_X,\frak b)$ into two different cases depending on the cohomological property of $[\omega_X] \in H^2_{\frak b} (X)$.

We start with the case where the linear map $L : H^1 (X, \R) \to  H^3_{\frak b} (X, \R),\, [\alpha]
\mapsto [\alpha] \wedge [\omega_X],$ is injective. In this case,
any solution of  (\ref{eq:bulk-MC-derivative}) 
is of the form
$$
(\kappa = - f\cdot \omega_X + \beta, \,  \frak c = df),
$$
where
$$
f\in C^\infty (X) \text{ and } \beta  \in \ker d^{\frak b} \cap \Omega ^2 (X).
$$
By Definition \ref{def:equi1}
$( - f\cdot \omega_X + \beta, df)$ is  equivalent to $(\beta, 0)$. The Cartan formula  yields
\beastar
\LL_\xi \frak b  & = & d(\frak b(\xi)) \\
\LL_\xi  (\omega_X) & = &\xi \rfloor  d\omega_X + d(\xi \rfloor \omega_X) =
 -\frak b(\xi) \omega_X + \frak b \wedge (\xi \rfloor \omega_X)  +  d (\xi \rfloor \omega_X).
\eeastar
Hence $(\beta, 0)$ is infinitesimally equivalent to zero
if and only if  there exist  a function $g \in C^\infty(X)$   and a vector field $\xi$ on  $X$  such that
\begin{eqnarray}
\beta =  d ^{\frak b} (\xi \rfloor \omega_X) -( g+ \frak b (\xi))\omega_X  \text{ and } 0 =  d(g+(\frak b(\xi)).\nonumber \\
\Leftrightarrow \beta =  d ^{\frak b} (\xi \rfloor \omega_X) -c\omega_X \text{ and } g+\frak b(\xi) = c.\nonumber
\end{eqnarray}
  Therefore  we have
\begin{equation}
\operatorname{Def}(X,\omega_X, \frak b) = ((\ker  d^b \cap \Omega ^2 (X))/ S(\omega, \frak b) (Vect(X)\oplus \R) =$$
$$= H^2_\frak b (X)/\langle \omega_X\rangle _{\otimes \R}.\label{eq:def1a}
\end{equation}
In particular, if $\frak b = 0$, i.e. $(X,\omega_X,\frak b)$ is actually a symplectic manifold,   then
\begin{equation}
\operatorname{Def}(X,\omega_X) = H ^2 (X,\R)/\langle \omega_X\rangle _{\otimes \R} .\label{eq:def1as}
\end{equation}

Next, we consider the case where the  linear map $L : H^1 (X, \R) \to  H^3_{\frak b} (X, \R),\, [\alpha] \mapsto [\alpha]
\wedge [\omega_X],$ is not injective. In this case
any solution of (\ref{eq:bulk-MC-derivative}) 
is of form
$$
(\kappa = - f\cdot \omega_X + \beta + \theta, \,  \frak c = df + \gamma),
$$
where
$$
f\in C^\infty (X), [\gamma] \not = 0  \in H^1 (X, \R), \, \gamma\wedge \omega_X = d^{\frak b}\theta,
\text{ and } \beta  \in \ker d^{\frak b} \cap \Omega ^2 (X).
$$
Again
the argument above implies that $(\kappa= - f\cdot \omega_X + \beta +\theta, \, \frak c = df +\gamma)$
is  infinitesimally equivalent to zero, if
and only if there exist  a function $g\in C^\infty (X)$   and a vector field $\xi$ on  $X$  such that
$$\gamma =- d(\frak b  (\xi) + g) \text{ and } \beta + \theta  = d^{\frak b} (\xi \rfloor \omega_X) - (g+ \frak b (\xi))\omega_X.$$
It follows  that $[\gamma] = 0 \in H^1 (X)$. Hence in this case  we have
\begin{equation}
\operatorname{Def}(X,\omega_X, \frak b)  =\ker L \oplus (\ker  d^{\frak b} \cap \Omega ^2 (X) /S(\omega_X, \frak b) (Vect(X) \oplus \R) =$$
$$= \ker L \oplus  H^2_\frak b (X)/\langle \omega_X\rangle _{\otimes \R}.\label{eq:def2a}
\end{equation}
In particular, if $\frak b = 0$, i.e. $(X,\omega_X,\frak b)$ is actually a symplectic manifold, then
\begin{equation}
\operatorname{Def}(X,\omega_X) = \ker L \oplus H^2 (X, \R) /\langle \omega_X \rangle_{\otimes \R}  .\label{eq:def2as}
\end{equation}

\begin{rem}\label{rem:banyaga} In \cite[Theorem 2]{banyaga:lcs}  Banyaga  considered   deformations of l.c.s. forms with a given Lee one-form.
\end{rem}

\section{Bulk deformations of coisotropic submanifolds; Zambon's example re-visited}

In this section we consider  bulk deformations of l.c.s. forms on a  l.c.s. manifold $X$ under which a given compact coisostropic submanifold  $Y$ stays  coisotropic.
Then we study  bulk coisotropic deformations of $Y$ under such  bulk deformations of l.c.s. forms (Definition \ref{def:bulkcois}, Lemma \ref{bpre}, Lemma \ref{lem:bulkcois}, Theorem \ref{thm:bulkisod}).  Finally we re-examine the Zambon example  under   bulk coisotropic deformations and show that it is still obstructed (Theorem \ref{thm:zambonlcs}.)

Given a coisotropic  submanifold $i: Y  \to (X, \omega_X, \frak b)$ we say that
a bulk deformation $(\omega_t, \frak b_t)$  respects $Y$, if $Y$  remains  coisotropic
in $(X, \omega _t, \frak b_t)$.
By the  normal  form theorem \ref{thm:normalform}, if $Y$ is compact, there exist
a neighborhood $U$ of $Y$ in $X$,  a family  of diffeomorphisms $\phi_t : U \to U$
and  a family of  smooth function $f_t \in C^\infty (U)$ such that  for all
  $t \in [-\varepsilon, \varepsilon]$ we have
 $$\phi_t (Y) = Id, $$
 $$\phi_t^*  (\pi^*  i^* \omega _t - d^{ \pi ^* i ^*  b_t}  \theta _G) = e ^{ f_t} \omega _t.$$
    Here  we identify $U$ with a neighborhood of the zero section of $E^* = E^*_t$  as in section \ref{sec:coiso-normal}.

\begin{defn}\label{def:bulkcois}  Assume that $Y$ is  a  coisotropic  submanifold of
$(U, \omega_U, d^{\pi ^* b}\theta_G )$.
A deformation $\Gamma_t: Y \to U$  is  called {\it a bulk coisotropic  deformation},
if there exists  a family  of   l.c.p-s. form $(\bar \omega _t, b_t)$  of constant rank on $Y$  with
$\bar \omega _0 =  i^* \omega_U$, $b_0  = b$  and for each $t\in [-\varepsilon, \varepsilon]$
(the graph of) $\Gamma_t$   is coisotropic      in $(U, \pi^* \bar \omega_t,  d^{\pi ^* b_t}\theta_G)$.
An infinitesimal  coisotropic deformation $\Gamma_1  \in \ker d^{\bar b}_\FF \cap \Omega ^1 (\FF)$
is called  {\it  formal bulk unobstructed}, if  there exist a  formal bulk deformation of
$(\bar \omega_0, b_0)$ and  a formal bulk coisotropic  deformation $\Gamma$ whose first term  is the given
 $\Gamma_1$.   An infinitesimal coisotropic deformation
 $\Gamma_1  \in \ker d^{\bar b}_\FF \cap \Omega ^1 (\FF)$  is called  {\it  smoothly  bulk unobstructed},
 if  there exists
 a smooth bulk coisotropic  deformation $\Gamma_t$ such that $(d/dt)_{t = 0} \Gamma_ t  = \Gamma_1$.
 \end{defn}

 The following Lemma is a direct consequence of Theorem \ref{thm:coisotropic}.

 \begin{lem}\label{lem:bulkcois}   A deformation $\Gamma_t$ is a bulk  coisotropic deformation,  if and only if there is a bulk deformation $(\bar \omega_t, b_t)$ of  the l.c.p-s. form $(\bar \omega_0, b_0)$  on $Y$ such that
 \begin{equation}
  (\bar \omega_t)^{k+1} = 0, \label{bulk1}
 \end{equation}
 \begin{equation}
  (\bar \omega_t - d ^{b _t} \Gamma_t) ^{ k+1} = 0.\label{eq:var}
  \end{equation}
 \end{lem}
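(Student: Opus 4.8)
The plan is to recognize the two displayed equations as, respectively, the condition that $(\bar\omega_t, b_t)$ be an l.c.p-s.\ form of the same rank $2k$ as $\omega$ and the condition that the graph of $\Gamma_t$ be coisotropic in the corresponding thickening, the latter being exactly Theorem \ref{thm:coisotropic} applied fibrewise in $t$. First I would unwind Definition \ref{def:bulkcois}: $\Gamma_t$ is a bulk coisotropic deformation precisely when there is a family $(\bar\omega_t, b_t)$ of constant-rank l.c.p-s.\ forms on $Y$ with $(\bar\omega_0,b_0) = (i^*\omega_U, b)$ such that, for each $t$, the graph of $\Gamma_t$ is coisotropic in $(U, \omega_{G,t}, \pi^* b_t)$, where I write $\omega_{G,t} := \pi^*\bar\omega_t - d^{\pi^* b_t}\theta_G$. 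The crucial point is that $\theta_G$ is the fixed one-form (\ref{eq:thetag}) attached to the splitting (\ref{eq:splitting}) chosen at $t=0$, and that $\omega_{G,t}$ is exactly the two-form (\ref{eq:omega*}) built from the data $(\bar\omega_t, b_t)$ and this fixed $\theta_G$.

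Next I would treat the rank. Since each $(\bar\omega_t, b_t)$ is l.c.p-s., $\bar\omega_t$ has constant rank on $Y$; equation (\ref{bulk1}), $(\bar\omega_t)^{k+1} = 0$, says this rank is at most $2k$, while $(\bar\omega_0)^k = \omega^k \neq 0$. As $(\bar\omega_t)^k \neq 0$ is an open condition and the family is smooth, the rank equals $2k$ for $t$ in a (possibly smaller) interval about $0$. This is precisely what guarantees that $\omega_{G,t}$ is nondegenerate near the zero section, so that $(U, \omega_{G,t}, \pi^* b_t)$ is a genuine l.c.s.\ thickening for every such $t$ (compare the discussion after (\ref{eq:omega*}) and the coordinate form (\ref{eq:omegaU})); nondegeneracy near $t=0$ also follows directly from continuity.

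I would then apply Theorem \ref{thm:coisotropic} with $(\omega, b)$ replaced by $(\bar\omega_t, b_t)$ and the section taken to be $\Gamma_t$. Its proof uses $\theta_G$ only through the projection $p_G$, so it goes through verbatim with the fixed $\theta_G$, giving that the graph of $\Gamma_t$ is coisotropic with respect to $\omega_{G,t}$ if and only if $(\bar\omega_t - d^{b_t}(p_G^*\Gamma_t))^{k+1} = 0$. Under the bundle isomorphism $p_G^*|_{E^*}: E^* \to G^\circ$ introduced after (\ref{eq:sg}), where one abbreviates $p_G^*\Gamma_t$ to $\Gamma_t$, this is exactly (\ref{eq:var}). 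Since Theorem \ref{thm:coisotropic} is itself an equivalence, reading it in both directions and combining with the rank discussion yields the claimed ``if and only if''.

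The only genuinely delicate step is the bookkeeping around the rank: one must verify that (\ref{bulk1}), together with smoothness and the initial condition $\bar\omega_0 = \omega$, really forces $\bar\omega_t$ to have constant rank exactly $2k$ — not merely $\le 2k$ — so that the deformed thickening $\omega_{G,t}$ stays nondegenerate and Theorem \ref{thm:coisotropic} remains applicable, with the same corank parameter $k$, for every $t$ in a small interval. Everything else is a transcription of Theorem \ref{thm:coisotropic} to the $t$-dependent family, which is why the lemma is ``a direct consequence.''
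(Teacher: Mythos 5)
Your proposal is correct and follows exactly the route the paper intends: the paper gives no written proof, stating only that the lemma ``is a direct consequence of Theorem \ref{thm:coisotropic},'' and your argument is precisely the fibrewise-in-$t$ application of that theorem, with (\ref{bulk1}) encoding the rank condition on $\bar\omega_t$. Your extra bookkeeping — that $(\bar\omega_t)^{k+1}=0$ together with smoothness, constancy of rank, and $\bar\omega_0=\omega$ forces rank exactly $2k$ so the deformed thickening stays nondegenerate and Theorem \ref{thm:coisotropic} applies with the same $k$ — is a worthwhile elaboration of what the paper leaves implicit.
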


  The following Lemma is obtained   straightforward.

  \begin{lem}\label{bpre} Let  $(\omega_t, b_t)$ be
a smooth family of l.c.p-s. structures of constant rank $2k$ on   $(Y, \omega_0)$ and denote
$$
\frac{\del \omega_t}{\del t}\Big|_{t=0} = \kappa, \quad \frac{\del  b_t}{\del t}\Big|_{t=0} =  c.
$$
Then $(\kappa, c)$ satisfies
\be\label{eq:dkappap}
d^{ b}\kappa = -  c \wedge \omega_Y, \quad d  c = 0
\ee
\be \label{eq:rank}
\omega_0 ^k \wedge \kappa = 0 \Leftrightarrow \kappa|_\FF = 0.
\ee
Here $\FF$ is the null foliation of $(Y, \omega_0)$.  Furthermore two equivalent
deformations generates equivalent  $(\kappa,\frak b)$.
 \end{lem}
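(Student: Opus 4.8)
The plan is to extract all three assertions by differentiating, at $t=0$, the structural identities satisfied by the family $(\omega_t,b_t)$, writing throughout $\omega_0 = \omega_Y$ and $b_0 = b$. First I would differentiate the two l.c.p-s. conditions $d\omega_t + b_t\wedge\omega_t = 0$ and $db_t = 0$ with respect to $t$ at $t=0$. The Leibniz rule gives $d\kappa + c\wedge\omega_Y + b\wedge\kappa = 0$, which is exactly $d^b\kappa = d\kappa + b\wedge\kappa = -c\wedge\omega_Y$, while the second identity yields $dc = 0$ immediately. This is \eqref{eq:dkappap}, and it is the verbatim l.c.p-s. analogue of the computation already performed for bulk deformations of l.c.s. structures on $X$ in \eqref{eq:att=0}--\eqref{eq:bulk-MC-derivative}.

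Next, the constant-rank hypothesis is the statement that $\omega_t^{k+1} = 0$ for all $t$; differentiating this at $t=0$ gives $(k+1)\,\omega_0^k\wedge\kappa = 0$, hence $\omega_0^k\wedge\kappa = 0$. It then remains to establish the pointwise equivalence \eqref{eq:rank}, which I would check in the foliation coordinates $(y^1,\dots,y^{2k},q^1,\dots,q^{n-k})$ of \eqref{eq:piomegaY}. There $E = T\FF = \operatorname{span}\{\partial/\partial q^\alpha\}$, and since $(\omega_{ij})$ is invertible, $\omega_0^k$ is a nonvanishing multiple of the transverse volume form $dy^1\wedge\cdots\wedge dy^{2k}$. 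Wedging with $\omega_0^k$ annihilates every monomial of $\kappa$ containing a factor $dy^i$, so $\omega_0^k\wedge\kappa$ is a nonzero multiple of $dy^1\wedge\cdots\wedge dy^{2k}\wedge\sum_{\alpha<\beta}\kappa_{\alpha\beta}\,dq^\alpha\wedge dq^\beta$, where $\kappa_{\alpha\beta} = \kappa(\partial/\partial q^\alpha,\partial/\partial q^\beta)$. Hence $\omega_0^k\wedge\kappa = 0$ if and only if all the purely leafwise components $\kappa_{\alpha\beta}$ vanish, i.e. if and only if $\kappa|_\FF = 0$; combined with the previous sentence this also yields $\kappa|_\FF = 0$.

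Finally, for the equivalence clause I would take two l.c.p-s.-equivalent families $(\omega_t,b_t)$ and $(\omega_t',b_t')$ (with $\omega_0' = \omega_Y$, $b_0' = b$), presented through a family of l.c.p-s. diffeomorphisms $\psi_t$ with $\psi_0 = \operatorname{Id}$ and conformal factors $a_t$ with $a_0 = 1$, so that by Definition \ref{morphism}
\[
\psi_t^*\omega_t' = a_t^{-1}\,\omega_t, \qquad \psi_t^* b_t' = b_t + d\ln|a_t|.
\]
Setting $\xi = \frac{d}{dt}\big|_{t=0}\psi_t$ and $f = \frac{d}{dt}\big|_{t=0}\ln|a_t|$ and differentiating both relations at $t=0$ via the standard flow-derivative identity $\frac{d}{dt}\big|_{t=0}\psi_t^*\alpha_t = \CL_\xi\alpha_0 + \frac{\partial\alpha_t}{\partial t}\big|_{t=0}$, I obtain $\kappa' = -f\,\omega_Y + \kappa - \CL_\xi\omega_Y$ and $c' = c + df - \CL_\xi b$; after replacing $\xi$ by $-\xi$ these are precisely the relations of Definition \ref{def:equi1}, so $(\kappa',c')$ and $(\kappa,c)$ are equivalent infinitesimal deformations.

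I do not expect a serious obstacle, since the entire lemma reduces to $t$-derivatives of the defining relations. The one point demanding genuine care is the pointwise identity \eqref{eq:rank}: one must track exactly which components of $\kappa$ survive after wedging with the transverse volume form $\omega_0^k$ and confirm that these are precisely the leafwise components $\kappa_{\alpha\beta}$.
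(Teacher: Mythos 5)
Your proposal is correct and is exactly the argument the paper intends: the authors omit the proof as ``straightforward,'' and it is the verbatim l.c.p-s.\ analogue of the differentiation carried out for bulk deformations of l.c.s.\ structures in \eqref{eq:att=0}--\eqref{eq:bulk-MC-derivative}, supplemented by differentiating $\omega_t^{k+1}=0$ and the coordinate check of \eqref{eq:rank} in the foliation coordinates of \eqref{eq:piomegaY}. All three steps, including the sign bookkeeping in the equivalence clause against Definition \ref{def:equi1}, check out.
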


 The discussion in the previous section   can be repeated word-for-word  for  bulk-deformations
 of an l.c.p-s. form $(\bar \omega, b)$ on $Y$, except that  we
need to take care of $\kappa$, $\beta$ (resp. $\beta +\theta$)
so that  their restriction to $\FF$ vanishes.  Equivalently,
they are in the differential ideal $\CI (\FF)$ generated by $T\FF^\circ$.
We define a subset
$$
\Omega^i(Y,\omega, \FF): = \Omega^i (Y) \cap \CI (\FF)
$$
which defines a differential submodule of $\Omega^i(Y)$ with respect to $d^b$.
Denote its cohomology by
$$
H ^i_b (Y, \omega, \FF) : = \frac{\ker d^b \cap \Omega^i (Y, \omega,\FF)}{d^b (\Omega^{i-1} (Y, \omega, \FF))}.
$$
Note that the wedge product  with $\omega$ restricts to a map  $\CI (\FF) \to \CI(\FF)$.
The map  descends to a  map $L: H^1_0(Y, \omega, \FF) \to H^3_b (Y, \omega, \FF)$.

The following theorem is  obtained  using the same arguments  in the previous section, so we omit its proof.

\begin{thm}\label{thm:bulkisod} The  space $Def(Y, \omega, b)$ of infinitesimal  equivalent bulk-deformations of
l.c.p-s. form $(\omega, b)$ on  $Y$
is isomorphic  to the space  $H^2_b (Y, \omega, \FF)/\langle \omega \rangle _{\otimes \R} \oplus \ker L.$
\end{thm}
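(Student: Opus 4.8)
The plan is to mirror, essentially verbatim, the derivation of \eqref{eq:def1a} and \eqref{eq:def2a} carried out in Section \ref{sec:bulk} for l.c.s.\ structures on $X$, but to perform every step inside the differential submodule $\Omega^\bullet(Y,\omega,\FF) = \Omega^\bullet(Y)\cap\CI(\FF)$ rather than in the full de Rham/Novikov complex. First I would record, via Lemma \ref{bpre}, that an infinitesimal bulk deformation of $(\omega,b)$ is a pair $(\kappa,c)$ satisfying $d^b\kappa = -c\wedge\omega$ and $dc=0$ together with the constant-rank condition \eqref{eq:rank}, i.e.\ $\kappa|_\FF = 0$, which is exactly the statement $\kappa\in\Omega^2(Y,\omega,\FF)$. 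Thus the cocycle data live in the submodule, and the target of the map $L\colon H^1_0(Y,\omega,\FF)\to H^3_b(Y,\omega,\FF)$ is the relative ($\CI(\FF)$-)cohomology, which is well defined because $\CI(\FF)$ is a differential ideal stable under $\wedge\,\omega$.

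The first key point I must check is that the equivalence relation of Definition \ref{def:equi1}, namely $\kappa\mapsto \kappa + \CL_\xi\omega - f\omega$ and $c\mapsto c + \CL_\xi b + df$, preserves the submodule $\Omega^\bullet(Y,\omega,\FF)$. For any $\xi\in\mathrm{Vect}(Y)$ one has $(\xi\rfloor\omega)(\eta)=\omega(\xi,\eta)=0$ for $\eta\in T\FF=\ker\omega$, so $\xi\rfloor\omega\in\Omega^1(Y,\omega,\FF)$; hence by the Cartan computation already used in Section \ref{sec:bulk}, $\CL_\xi\omega = d^b(\xi\rfloor\omega)-b(\xi)\,\omega$ lies in $\CI(\FF)$, and of course $f\omega\in\CI(\FF)$. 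Consequently the generator $S(\omega,b)(\xi,c) = d^b(\xi\rfloor\omega)-c\,\omega$ of the equivalence stays within the submodule, so the quotient makes sense and is computed exactly as in the $X$-case.

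With this in place I would repeat the two-case analysis. From $dc=0$ and $d^b\kappa=-c\wedge\omega$ with $\kappa\in\CI(\FF)$ one gets $[c\wedge\omega]=0$ in $H^3_b(Y,\omega,\FF)$, i.e.\ $L[c]=0$; choosing a representative $\gamma$ of $[c]$ with $\gamma\wedge\omega=d^b\theta$, $\theta\in\CI(\FF)$, and writing $c=\gamma+df$, the form $\beta:=\kappa+\theta+f\omega$ is $d^b$-closed and lies in $\CI(\FF)$ (this is precisely the ``take care so that the restriction to $\FF$ vanishes'' step). This exhibits $(\kappa,c)$, up to equivalence, as the pair of classes $([\beta],[\gamma])\in H^2_b(Y,\omega,\FF)\times\ker L$. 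Running the equivalence the other way, the conformal term $-f\omega$ together with the exact term $d^b(\xi\rfloor\omega)$ shows that $[\beta]$ is only well defined modulo $\langle\omega\rangle_{\otimes\R}$, while the exact change $c\mapsto c+d(b(\xi)+f)$ leaves $[\gamma]$ unaffected. When $L$ is injective the $\gamma$-summand is absent and one recovers the analogue of \eqref{eq:def1a}; in general one obtains the direct sum, giving $\operatorname{Def}(Y,\omega,b)\cong H^2_b(Y,\omega,\FF)/\langle\omega\rangle_{\otimes\R}\oplus\ker L$ as claimed.

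The one genuinely new ingredient beyond Section \ref{sec:bulk}---and the step I expect to require the most care---is the bookkeeping that every auxiliary form produced in the argument ($\kappa$, $\beta$, $\theta$, and the primitives used to trivialize the equivalence) can be chosen inside the ideal $\CI(\FF)$, so that the plain and Novikov cohomologies of $X$ are systematically replaced by their relative versions $H^\bullet_b(Y,\omega,\FF)$ and $L$ is genuinely defined on $H^1_0(Y,\omega,\FF)$. This rests on two facts that I would isolate first: $\CI(\FF)$ is a $d^b$-differential ideal, and $\wedge\,\omega$ maps $\CI(\FF)$ into itself (because $\omega\in\CI(\FF)$ by \eqref{eq:piomegaY}), together with the constant-rank condition \eqref{eq:rank}, which guarantees that the relevant closed one- and two-forms are representable within the ideal. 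Once these are established, the rest of the proof is identical to that of \eqref{eq:def1a}--\eqref{eq:def2a} and may be omitted.
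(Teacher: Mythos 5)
Your proposal is correct and follows essentially the same route as the paper, which explicitly omits the proof of Theorem \ref{thm:bulkisod} on the grounds that it is ``obtained using the same arguments in the previous section,'' i.e.\ the derivation of \eqref{eq:def1a}--\eqref{eq:def2a} transplanted into the relative complex $\Omega^\bullet(Y,\omega,\FF)=\Omega^\bullet(Y)\cap\CI(\FF)$. The checks you isolate --- that $\CI(\FF)$ is a $d^b$-differential ideal stable under $\wedge\,\omega$, that $\xi\rfloor\omega\in\CI(\FF)$ so the generators $S(\omega,b)(\xi,c)$ of the equivalence stay in the submodule, and that the rank condition \eqref{eq:rank} places $\kappa$ (and hence $\beta$, $\theta$) in the ideal --- are precisely the points the paper flags but does not write out.
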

Now we are ready to analyze Zambon's example.

\begin{exm}
We recall Zambon's example from \cite{zambon}, \cite{oh-park}.
Let $(Y,\omega)$ be the
standard 4-torus $T^4 = \R^4/\Z^4$ with coordinates $(y^1, y^2,
q^1, q^2)$ with the pre symplectic form
$$
\omega_Y = \bar \omega_0=  dy^1 \wedge dy^2, \,  b_0 = 0.
$$
Note that the null foliation is provided by the 2-tori
$$
\{y^1 = const,\, y^2 = const \},
$$
and it also carries the transverse foliation given by
$$
\{q^1 = const, \, q^2 = const \}.
$$
The canonical symplectic thickening is given by
\begin{eqnarray*}
E^* & = & T^4 \times \R^2 = T^2 \times T^*(T^2), \\
\omega & = & dy^1\wedge dy^2 + (dq^1 \wedge dp^1 + dq^2 \wedge dp^2),
\end{eqnarray*}
where $p^1, \, p^2$ are the canonical conjugate coordinates of
$q^1, \, q^2$.


It follows that the transverse curvature $F \equiv 0$ and so
all $\frak m_\ell = 0$ for $\ell \geq 3$ and
the Maurer-Cartan equation (\ref{eq:MC}) becomes the quadratic equation (cf. (\ref{N=2gb}))
\begin{equation}\label{eq:tori}
- d_\FF(\Gamma_2)  = {1\over 2} P_{\omega_Y} \rfloor (d\Gamma_1) ^2.
\end{equation}



In \cite{zambon}, \cite{oh-park}, the one-form
$$
\Gamma_1 = \sin (2\pi y^1) dq^1 + \sin (2 \pi y^2) dq^2,
$$
was shown to be obstructed by showing  that $Kr ([\Gamma_1]) \not  = 0$. This can be also  shown by
computing the RHS of (\ref{eq:tori})
$$
{1\over 2} P_{\omega_Y} \rfloor (d\Gamma_1 ) ^2  = (\partial y_1 \wedge \partial y_2) \rfloor
4\pi^2 \cos (2\pi y^1) \cos (2\pi y^2) dy^1 dq^1 dy^2 dq ^2 =$$
$$=- 4\pi^2 \cos (2\pi y^1) \cos (2\pi y^2) dq^1 dq ^2,$$
which cannot be  a differential  $d _\FF(-\Gamma_2)$, since the integration
of it over   a generic leaf $T^2(y^1, y^2)$ of $\FF$ is not zero.


\begin{thm}\label{thm:zambonlcs} $\Gamma_1 $ is  formally  bulk obstructed.
\end{thm}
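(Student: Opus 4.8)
The plan is to prove obstructedness already at second order: I will show that no formal bulk coisotropic deformation $\Gamma=\sum_{k\ge1}\e^k\Gamma_k$ with the prescribed leading term $\Gamma_1$ can satisfy the pair of equations \eqref{bulk1}, \eqref{eq:var} of Lemma \ref{lem:bulkcois} to order $\e^2$, no matter which accompanying formal bulk deformation $(\bar\omega_t,b_t)$ of $(\bar\omega_0,b_0)=(\omega_Y,0)$ is chosen. The point that makes the order-$\e^2$ analysis decisive is that Zambon's transverse curvature vanishes ($F\equiv0$): then every $\frak m^b_\ell$ with $\ell\ge3$ vanishes and the relevant equation is genuinely quadratic, so there is no room for higher-order terms to repair a second-order failure.

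Concretely, I will write $\bar\omega_t=\omega_Y+\e\kappa+O(\e^2)$, $b_t=\e c+O(\e^2)$ and $\Gamma_t=\e\Gamma_1+\e^2\Gamma_2+\cdots$, where by Lemma \ref{bpre} the infinitesimal bulk datum obeys $dc=0$, $d\kappa=-c\wedge\omega_Y$ and the constant-rank constraint $\omega_Y\wedge\kappa=0$, equivalently $\kappa|_\FF=0$. Extracting the $\e^2$-coefficient of the coisotropy condition \eqref{eq:var} and eliminating the second-order bulk term by means of the $\e^2$-coefficient of the rank condition \eqref{bulk1}, I obtain a four-form identity on $T^4$ of the form
\[
\omega_Y\wedge d^{\bar b}_\FF\Gamma_2 \;=\; \tfrac12(d\Gamma_1)^2 \;-\;\kappa\wedge d\Gamma_1\;-\;\omega_Y\wedge c\wedge\Gamma_1 .
\]
Contracting with the transverse bivector $P_{\omega_Y}=\del_{y^1}\wedge\del_{y^2}$, exactly as in the passage from \eqref{eq:sglin} to \eqref{N=2gb}, turns this into a leafwise two-form equation $d^{\bar b}_\FF\Gamma_2=R$ on each leaf $L=T^2(y^1,y^2)$, whose right-hand side is the Zambon term of \eqref{eq:tori} corrected by the two bulk contributions $P_{\omega_Y}\rfloor(\kappa\wedge d\Gamma_1)$ and $(c\wedge\Gamma_1)|_\FF$ (up to the overall sign convention in \eqref{N=2gb}).

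The argument then runs through the leaf-integral obstruction: every leaf is a closed torus, so $\int_L d^{\bar b}_\FF\Gamma_2=0$, and a second-order solution can exist only if $\int_L R=0$ for every leaf. The Zambon term contributes, as in the display just before the theorem, $\int_L\tfrac12 P_{\omega_Y}\rfloor(d\Gamma_1)^2=-4\pi^2\cos(2\pi y^1)\cos(2\pi y^2)$, which is not identically zero. It therefore remains to verify that the two bulk terms integrate to zero over each leaf: using $d\kappa=-c\wedge\omega_Y$ I will rewrite $\omega_Y\wedge c\wedge\Gamma_1=-d\kappa\wedge\Gamma_1$ and integrate by parts along $L$, producing only $\del/\del q$-derivatives, and I will show that the admissible $\kappa$ contribute nothing to $\int_L P_{\omega_Y}\rfloor(\kappa\wedge d\Gamma_1)$ because on the leaf directions $\kappa$ is forced to act as a multiple of $\omega_Y$, for which $\omega_Y\wedge d\Gamma_1=0$ makes this wedge vanish. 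Granting this, $\int_L R=-4\pi^2\cos(2\pi y^1)\cos(2\pi y^2)\not\equiv0$, the equation $d^{\bar b}_\FF\Gamma_2=R$ has no solution, and $\Gamma_1$ is formally bulk obstructed in the sense of Definition \ref{def:bulkcois}.

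The main obstacle is precisely this last step: ruling out that the \emph{extra} freedom of deforming the l.c.p-s.\ form $(\bar\omega,b)$ cancels the Zambon class. A general $\kappa\in\CI(\FF)$ carries $dy\wedge dq$-components whose leaf-integrals need not vanish, so the crux is to identify the correct class of admissible bulk deformations—those compatible with the \emph{fixed} null foliation $\FF$ underlying the strong homotopy Lie algebroid $T\FF\to Y$—and to prove that within this class the two correction densities are leafwise-exact after fiber integration over the compact leaves. Once that compatibility is pinned down, the obstruction reduces to the cohomologically nontrivial class $[\tfrac12 P_{\omega_Y}\rfloor(d\Gamma_1)^2]\in H^2_{\bar b}(\FF)$, exactly the obstruction of the undeformed problem, and the theorem follows.
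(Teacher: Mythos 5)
Your strategy---extract the order-$\e^{2}$ coefficients of \eqref{bulk1} and \eqref{eq:var}, contract with $P_{\omega_Y}$, and test the resulting leafwise equation $d^{\bar b}_\FF\Gamma_2=R$ against integration over the closed torus leaves---is the same as the paper's, and your second-order identity (with the cross term $\kappa\wedge d\Gamma_1$ present) is the correct one. The problem is the step you yourself flag as the crux: the claim that admissible $\kappa$ contribute nothing to $\int_L P_{\omega_Y}\rfloor(\kappa\wedge d\Gamma_1)$ is false, and your heuristic for it is wrong. The constraint $\omega_Y\wedge\kappa=0$ (equivalently $\kappa|_\FF=0$) only kills the $dq\wedge dq$ components of $\kappa$; it leaves the mixed components $dy^i\wedge dq^\alpha$ free, and since $d\Gamma_1$ is itself of mixed type these pair to a top-degree form whose leaf integral need not vanish. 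Concretely, $\kappa=2\pi\cos(2\pi y^1)\,dy^1\wedge dq^1$ is closed and satisfies $\omega_Y\wedge\kappa=0$, hence is admissible by Lemma \ref{bpre} with $c=0$; for this choice $\kappa\wedge d\Gamma_1=\tfrac12(d\Gamma_1)^2$, so the right-hand side of your four-form identity vanishes identically and $\Gamma_2=0$ solves the second-order equation. (In fact $\bar\omega_t=\omega_Y+t\,2\pi\cos(2\pi y^1)\,dy^1\wedge dq^1$, $b_t=0$, $\Gamma_t=t\Gamma_1$ satisfies $(\bar\omega_t)^{2}=0$ and $(\bar\omega_t-d\Gamma_t)^{2}=0$ exactly, since $\bar\omega_t-t\,d\Gamma_1=dy^1\wedge dy^2-t\,2\pi\cos(2\pi y^2)\,dy^2\wedge dq^2$ has vanishing square.) So within the class of deformations delimited by Definition \ref{def:bulkcois} and Lemma \ref{bpre}---constant rank, but no condition fixing the null foliation---your leaf-integral obstruction can be cancelled and the argument cannot be completed.

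For comparison, the paper's printed proof never confronts this term: after \eqref{eq:om2} cancels $\bar\omega_0\wedge\bar\omega_2$ against $\bar\omega_1^{2}$, the cross term $\bar\omega_1\wedge d\Gamma_1$ coming from $(\bar\omega_1-d\Gamma_1)^{2}$ should appear in \eqref{eq:g2} but is simply absent, and the constraints \eqref{eq:z1}--\eqref{eq:z3} on $\bar\omega_1$ are not enough to make it leafwise exact. You are therefore bookkeeping more carefully than the paper, but the term you cannot dispose of is precisely the one whose unexplained disappearance lets the printed argument close. The way to rescue the statement is the restriction you gesture at in your last paragraph: demand that the bulk deformation preserve the null foliation to first order, i.e.\ $\iota_v\kappa=0$ for all $v\in T\FF$, which forces $\kappa=h\,dy^1\wedge dy^2$ and hence $\kappa\wedge d\Gamma_1=h\,\omega_Y\wedge d\Gamma_1=0$ by the linearized equation; combined with your (correct, and the same as the paper's) treatment of the $c$-term via $[c]\in\ker L$, so that only a leafwise-exact piece $d_\FF(f\Gamma_1)$ survives, the obstruction then reduces to the nonvanishing class $[\tfrac12 P_{\omega_Y}\rfloor(d\Gamma_1)^2]$ as you intend. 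As written, however, that restriction is neither in the definitions nor proved, and the step fails.
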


\begin{proof}  Assume the opposite, i.e. there is a formal  bulk deformation $\omega_t = \sum_{i=0}^\infty  t^i \bar \omega _i$   and
 bulk deformation $\Gamma_t = \sum_{ i =1} ^\infty t^i\Gamma_i$.   The equation (\ref{bulk1}) implies
\begin{equation}
\sum_{ i = 0} ^l \bar \omega_i \bar \omega _{l -i } = 0 \text{ for all } 0\le l \le \infty .\label{bulk2}
\end{equation}
 Furthermore $d^{b_t} \omega _t = 0$ is equivalent to the following:
 \begin{equation}
  db_i = 0 \text{ for  all } 0\le i \le \infty \label{eq:bi}
 \end{equation}
\begin{equation}
\text{ and }  0 =  d^{b_0} \bar\omega_i  +\sum_{1\le k \le i} b_k\wedge \bar\omega_{ i-k}  \text{ for  all } 1\le i \le \infty. \label{eq:omi}
\end{equation}

Next,  from (\ref{bulk2}), for $l = 1$, we obtain
$$\bar \omega_0 \wedge \bar \omega _1 = 0 $$
\begin{equation}
\Leftrightarrow \bar \omega_1 = dy^1 \wedge \alpha ^1 + dy ^2 \wedge \alpha ^2, \label{eq:z1}
\end{equation}
where $\alpha ^i \in \Omega ^1 (Y)$.
Further, we obtain from (\ref{eq:omi})  for $i = 1$
\begin{equation}
d\bar \omega _1 = - b_1 \wedge  dy^1\wedge dy^2.\label{eq:z2}
\end{equation}
From (\ref{eq:z2}) and (\ref{eq:bi}) it follows that $[b_1] \in \ker L: H^1 (Y) \to H^3 (Y), \, \gamma\mapsto  \gamma \wedge [\omega_Y]$. Hence we obtain
\begin{equation}
b_1 = b_1 ^1 dy^1 + b^2_1 dy^2 + df \text{ where } b_1 ^i \in \R \text{ and } f \in C^\infty (Y).\label{eq:z3}
\end{equation}
Now from (\ref{bulk2}) for $l = 2$ we obtain
\begin{equation}
\bar \omega_0 \wedge \bar \omega_2  + \bar \omega_1^2  = 0.\label{eq:om2}
\end{equation}
Next, using (\ref{eq:om2}) and (\ref{eq:z3}), we derive from (\ref{eq:var}), looking at the   coefficient of $t^2$,
\begin{eqnarray}
\omega_0 \wedge(d\Gamma_2 + b_1 \wedge \Gamma_1)  + (d\Gamma_1) ^2 = 0\nonumber\\
\Leftrightarrow d_\FF \Gamma_2 + d_\FF(f\cdot \Gamma_1) = 4\pi^2 \cos 2\pi y^1 \cos 2\pi y^2 dq ^1 d q ^2.\label{eq:g2}
\end{eqnarray}
As we have computed, the RHS  of (\ref{eq:g2}) is not equal to zero in $H^2_b(Y, \omega_Y)$.  So the  equation (\ref{eq:g2}) for $\Gamma_2$  and $f$  does not have a solution. This completes the proof of Theorem \ref{thm:zambonlcs}.
\end{proof}

\end{exm}

\section{Appendix}

\subsection{Leaf space connection and curvature}
\label{subsec:splittings}

In this subsection, we recall some basic definitions and
properties of the leaf space connection borrowing the exposition
of \cite[section 3]{oh-park}. We refer readers thereto for the proofs
of all the results stated without proof in the present subsection.

Let $\FF$ be an arbitrary foliation on a smooth manifold $Y$.
Following the standard notations in the
foliation theory, we define the normal bundle $N\FF$ and conormal
bundle $N^*\FF$ of the foliation $\FF$ by
$$
N_y\FF:= T_yY/E_y, \quad N^*_y\FF: = (T_y/E_y)^* \cong E_y^\circ
\subset T_y^*Y.
$$
In this vein, we will denote $E=T\FF$ and $E^*=T^*\FF$
respectively, whenever it makes our discussion more transparent.
We have the natural exact sequences
\begin{eqnarray}\label{eq:exactseq}
0 & \to &  T\FF \to TY \to N\FF \to 0, \\
0 &\leftarrow & T^*\FF \leftarrow T^*Y \leftarrow N^*\FF \leftarrow 0.
\end{eqnarray}
The choice of splitting $TY = G\oplus T\FF$ may be regarded as a
``connection'' of the ``$E$-bundle'' $TY \to Y /\sim$ where $Y/\sim$
is the space of leaves of the foliation on $Y$. Note that
$Y/\sim$ is {\it not} Hausdorff in general. We will indeed call
a choice of
splitting {\it a leaf space connection of $\FF$} in general.

We can also describe the splitting in a more invariant way as
follows: Consider bundle maps $\Pi: TY \to TY$ that satisfy
$$
\Pi^2_x = \Pi_x, \, \operatorname{im }\Pi_x = T_x\FF
$$
at every point of $Y$, and denote the set of such projections by
$$
\AA_E(TY) \subset \Gamma(Hom(TY,TY)) = \Omega^1_1(Y).
$$
There is a one-one correspondence between the choice of splittings
(\ref{eq:splitting}) and the set $\AA_E(TY)$ provided by the correspondence
$$
\Pi \leftrightarrow G:= \ker \Pi.
$$
If necessary, we will denote by $\Pi_G$ the element with $\ker \Pi
=G$ and by $G_\Pi$ the complement to $E$ determined by $\Pi$. We
will use either of the two descriptions, whichever is more
convenient depending on the circumstances.

Next we recall the notion of curvature of
the $\Pi$-connection.

\begin{defn}\label{curvature}
Let $\Pi \in \AA_E(TY)$ and denote by $\Pi: TY = G_\Pi \oplus
T\FF$ the corresponding splitting. The {\it transverse
$\Pi$-curvature} of the foliation $\FF$ is a $T\FF$-valued
two form defined on $N\FF$ as follows: Let $\pi: TY \to N\FF$ be
the canonical projection and
$$
\pi_\Pi: G_\Pi \to N\FF
$$
be the induced isomorphism. Then we define
$$
F_\Pi: \Gamma(N\FF) \otimes \Gamma(N\FF) \to \Gamma(T\FF)
$$
by
\begin{equation}\label{eq:Fdef}
F_\Pi(\eta_1, \eta_2): = \Pi([X,Y])
\end{equation}
where $X = \pi_\Pi^{-1}(\eta_1)$ and $Y= \pi_\Pi^{-1}(\eta_2)$
and  $[X,Y]$ is the Lie bracket on $Y$.
\end{defn}

The following proposition justifies the name {\it
transverse $\Pi$-curvature} which plays a crucial role
in our description of the strong homotopy Lie algebroid associated
to the pre-symplectic manifold $(Y,\omega_Y)$ (and so of
coisotropic submanifolds) and its Maurer-Cartan equation.
We refer to \cite{oh-park} for its proof.

\begin{prop}\label{welldefined} Let $F_\Pi$ be as above.
For any smooth functions $f, \, g$ on $Y$
and sections $\eta_1, \, \eta_2$ of $N\FF$, we have the identity
$$
F_\Pi(f\eta_1,g\eta_2) = fgF_\Pi(\eta_1,\eta_2)
$$
i.e., the map $F_\Pi$ defines a well-defined section as an element in
$\Gamma(\Lambda^2(N^*\FF)\otimes T\FF)$.
\end{prop}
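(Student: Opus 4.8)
The plan is to reduce the statement to the assertion that the map $(\eta_1,\eta_2)\mapsto F_\Pi(\eta_1,\eta_2)$ is $C^\infty(Y)$-bilinear. Once this $C^\infty$-linearity is in hand, the standard tensor-characterization principle (a map of sections that is pointwise linear over functions in each argument is induced by a bundle map) shows that $F_\Pi$ is the section of $\operatorname{Hom}(N\FF\otimes N\FF, T\FF)$ whose value at a point depends only on the pointwise values $\eta_1(y),\eta_2(y)$. Combined with the skew-symmetry explained below, this yields exactly an element of $\Gamma(\Lambda^2(N^*\FF)\otimes T\FF)$, which is what the proposition claims. So the whole content is the bilinearity, and by symmetry of the roles of the two slots it is enough to track how the derivative terms produced by the Leibniz rule are annihilated.

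To carry this out, first I would use that $\pi_\Pi\colon G_\Pi\to N\FF$ is a vector bundle isomorphism, so its inverse is $C^\infty(Y)$-linear; writing $X=\pi_\Pi^{-1}(\eta_1)$ and $Z=\pi_\Pi^{-1}(\eta_2)$, the lifts of $f\eta_1$ and $g\eta_2$ are simply $fX$ and $gZ$. Then I would apply the Leibniz rule for the Lie bracket,
\[
[fX,gZ]=fg\,[X,Z]+f(Xg)\,Z-g(Zf)\,X,
\]
and apply the bundle projection $\Pi$, which is $C^\infty(Y)$-linear as a morphism $TY\to TY$. The key observation is that $X,Z\in G_\Pi=\ker\Pi$ by construction, so $\Pi(X)=\Pi(Z)=0$ and the two non-tensorial (derivative) terms are killed, leaving
\[
F_\Pi(f\eta_1,g\eta_2)=\Pi([fX,gZ])=fg\,\Pi([X,Z])=fg\,F_\Pi(\eta_1,\eta_2).
\]
This is the desired bilinearity. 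Skew-symmetry is immediate from skew-symmetry of the Lie bracket, since $F_\Pi(\eta_2,\eta_1)=\Pi([Z,X])=-\Pi([X,Z])=-F_\Pi(\eta_1,\eta_2)$, so $F_\Pi$ indeed factors through $\Lambda^2(N^*\FF)$.

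I do not expect a genuine obstacle here: the argument is the usual mechanism by which the curvature of a connection turns out to be tensorial despite being defined through a Lie bracket of lifts. The only point requiring care is the bookkeeping that the two derivative terms $f(Xg)\Pi(Z)$ and $g(Zf)\Pi(X)$ both vanish, and this is precisely where the defining property $\operatorname{im}\Pi_x=T_x\FF$, equivalently $G_\Pi=\ker\Pi$, is used — the horizontality of the lifts is exactly what makes the construction independent of how $\eta_1,\eta_2$ are extended to sections.
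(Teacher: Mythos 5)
Your proof is correct and is essentially the intended argument: the paper itself omits the proof, referring to \cite{oh-park}, and the standard tensoriality computation you give --- the unique lifts $X=\pi_\Pi^{-1}(\eta_1)$, $Z=\pi_\Pi^{-1}(\eta_2)$ lie in $G_\Pi=\ker\Pi$, so the Leibniz terms $f(Xg)\Pi(Z)$ and $g(Zf)\Pi(X)$ in $\Pi([fX,gZ])$ vanish --- is exactly what that reference does, and it is consistent with the coordinate formula \eqref{eq:Fbetaij}. No gaps.
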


In the foliation coordinates $(y^1,\cdots,y^{\ell},q^1, \cdots,
q^{m-\ell})$, $F_\Pi$ has the expression
\begin{equation}
F_\Pi = F^\beta_{ij} \frac{\del}{\del q^\beta} \otimes dy^i \wedge
dy^j \in \Gamma(\Lambda^2(N^*\CF) \otimes T\FF),
\end{equation}
where
\be\label{eq:Fbetaij}
F^\beta_{ij}= \frac{\del R^\beta_j}{\del y^i} - \frac{\del
R^\beta_i}{\del y^j} + R^\gamma_i \frac{\del R^\beta_j}{\del
q^\gamma} -R^\gamma_j \frac{\del R^\beta_i}{\del q^\gamma}.
\ee

We next recall the relationship between $F_{\Pi_0}$ and $F_{\Pi}$.
Note that with respect to the given splitting
$$
\Pi_0:\, TY = G_0\oplus T\FF \cong N\FF \oplus T\FF
$$
any other projection $\Pi: TY \to TY$ can be written as the
following block matrix
$$
\Pi = \Big(\begin{matrix}
0 & 0 \\
B & Id
\end{matrix}\Big)
$$
where $B = B_{\Pi_0\Pi}\circ \pi_{G_0}: G_0 \to T\FF$ is the
bundle map  which is uniquely
determined by $\Pi_0$ and $\Pi$ and vice versa. The following
lemma shows their relationship in coordinates.

\begin{lem}\label{relation}
Let $F_{\Pi}$ and $F_{\Pi_0}$ be the transverse $\Pi$-curvatures
with respect to $\Pi$ and $\Pi_0$ respectively, and let
$B=B_{\Pi_0\Pi}$ be the bundle map mentioned above.
In terms of the foliation coordinates, we have
\begin{eqnarray}
F^\beta_{ij} & = F^\beta_{0,ij} + \Big(\frac{\del B_j^\beta}{\del y^i}
-\frac{\del B^\beta_i}{\del y^j} +
R^\alpha_i\frac{\del B^\beta_j} {\del q^\alpha} -
R^\alpha_j\frac{\del B^\beta_i} {\del q^\alpha} +
B^\alpha_i\frac{\del R^\beta_j} {\del q^\alpha} -
B^\alpha_j\frac{\del R^\beta_i} {\del q^\alpha}
\Big) \nonumber \\
& \quad + \Big(B_i^\alpha \frac{\del B^\beta_j} {\del q^\alpha}
-  B^\alpha_j\frac{\del B^\beta_i} {\del q^\alpha}\Big)
\label{eq:coord-relation}
\end{eqnarray}
\end{lem}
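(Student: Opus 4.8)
The plan is to prove Lemma~\ref{relation} by a direct computation in the foliation coordinates $(y^1,\dots,y^\ell,q^1,\dots,q^{m-\ell})$, feeding the change-of-splitting data into the explicit curvature formula \eqref{eq:Fbetaij} for each of the two splittings. First I would translate the block-matrix description $\Pi=\bigl(\begin{smallmatrix}0&0\\ B& Id\end{smallmatrix}\bigr)$ of $\Pi$ relative to $\Pi_0$ into a relation between the connection coefficients. Since $G_\Pi=\ker\Pi$ is the graph of $B$ over $G_0$ in the $T\FF$-directions, comparing the two spanning frames $\frac{\del}{\del y^i}+R^\beta_i\frac{\del}{\del q^\beta}$ and $\frac{\del}{\del y^i}+R^\beta_{0,i}\frac{\del}{\del q^\beta}$ identifies the components of the bundle map $B=B_{\Pi_0\Pi}$ with the difference $R^\beta_i-R^\beta_{0,i}$ of the two sets of connection coefficients (the overall sign being fixed by the block-matrix convention). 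This single identity is the only geometric input; everything else is algebra.

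Next I would substitute $R^\beta_i=R^\beta_{0,i}+B^\beta_i$ into \eqref{eq:Fbetaij}, expand by the Leibniz rule, and subtract $F^\beta_{0,ij}$. The $y$-derivative part contributes immediately the antisymmetrized linear term $\frac{\del B^\beta_j}{\del y^i}-\frac{\del B^\beta_i}{\del y^j}$. The two $q$-derivative products $R^\gamma_i\frac{\del R^\beta_j}{\del q^\gamma}$ and $R^\gamma_j\frac{\del R^\beta_i}{\del q^\gamma}$, once expanded, split into three kinds of pieces: the mixed terms $R^\alpha_i\frac{\del B^\beta_j}{\del q^\alpha}$ and $B^\alpha_i\frac{\del R^\beta_j}{\del q^\alpha}$ together with their $i\leftrightarrow j$ partners, and the purely quadratic terms $B^\alpha_i\frac{\del B^\beta_j}{\del q^\alpha}$. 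Collecting these, and using the antisymmetry in $(i,j)$ to pair up $\del_q$-contributions, should reproduce the three grouped expressions in \eqref{eq:coord-relation}.

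The delicate point, and the step I expect to cost the most care, is the bookkeeping of these bilinear terms. The mixed terms written with the new coefficients $R^\alpha_i$ differ from those written with the background coefficients $R^\alpha_{0,i}$ by exactly a term of the form $B^\alpha_i\frac{\del B^\beta_j}{\del q^\alpha}$, so the precise coefficient and sign of the surviving quadratic term in \eqref{eq:coord-relation} is entirely determined by the convention of expressing the mixed terms through $R^\alpha_i$ rather than $R^\alpha_{0,i}$; one must substitute $R^\alpha_{0,i}=R^\alpha_i-B^\alpha_i$ consistently and recombine to land on the stated normalization. As a cross-check that fixes all signs unambiguously, I would also run the coordinate-free argument from Definition~\ref{curvature}: writing the horizontal lifts as $X_\Pi=X_0-B(X_0)$, expanding $[X_\Pi,Y_\Pi]$ by the bilinearity of the Lie bracket, and applying $\Pi$ with $\Pi|_{T\FF}=Id$ and $\Pi|_{G_\Pi}=0$ yields the same identity intrinsically and pins down the quadratic contribution.
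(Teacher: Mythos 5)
The paper itself gives no proof of Lemma~\ref{relation} --- the Appendix states that all results in that subsection are quoted from \cite{oh-park} --- so your direct coordinate computation is exactly the argument being deferred to, and it is correct in outline: identify the components of $B=B_{\Pi_0\Pi}$ with the difference of the connection coefficients, substitute into \eqref{eq:Fbetaij}, expand, and collect the terms linear and quadratic in $B$. The one point you flag but leave unresolved does need to be pinned down, because the stated normalization of \eqref{eq:coord-relation} is only consistent with one reading. If you set $R^\beta_i=R^\beta_{0,i}+B^\beta_i$ and expand, the cross terms come out as $R^\gamma_{0,i}\frac{\del B^\beta_j}{\del q^\gamma}+B^\gamma_i\frac{\del R^\beta_{0,j}}{\del q^\gamma}+B^\gamma_i\frac{\del B^\beta_j}{\del q^\gamma}$ (minus the $i\leftrightarrow j$ partner), so the quadratic term appears with coefficient $+1$ precisely when the $R^\alpha_i$ in the middle bracket of \eqref{eq:coord-relation} are read as the coefficients of the \emph{reference} splitting $\Pi_0$; if you instead rewrite the mixed terms through the new coefficients of $\Pi$, the quadratic term flips to $-\bigl(B^\alpha_i\frac{\del B^\beta_j}{\del q^\alpha}-B^\alpha_j\frac{\del B^\beta_i}{\del q^\alpha}\bigr)$. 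The correct choice is forced by consistency with Proposition~\ref{inv-relation} together with \eqref{eq:[B,C]incoord}, which give $F_\Pi=F_{\Pi_0}+d^{\Pi_0}B+[B,B]_{\Pi_0}$ with everything expressed in $\Pi_0$-data and with $[B,B]_{\Pi_0}$ equal to the displayed quadratic term with a plus sign. Relatedly, note that the kernel of the displayed block matrix $\bigl(\begin{smallmatrix}0&0\\ B& Id\end{smallmatrix}\bigr)$ is the graph of $-B$ over $G_0$, so the naive frame comparison gives $R_i-R_{0,i}=-B_i$; you must therefore take the bundle map $B_{\Pi_0\Pi}$ with the sign that makes $R_i=R_{0,i}+B_i$ (equivalently, run your intrinsic cross-check with $X_\Pi=X_0+B(X_0)$ rather than $X_0-B(X_0)$) to land on the lemma as stated. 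With that convention fixed, your computation goes through and the intrinsic verification via $\Pi([X_\Pi,Y_\Pi])$ is a sound way to confirm the signs.
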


Now we provide an invariant description of the above formula
(\ref{eq:coord-relation}). Consider the sheaf
$\Lambda^\bullet(N^*\FF)\otimes T\FF$ and denote by
$$
\Omega^\bullet(N^*\FF; T\FF): =
\Gamma(\Lambda^\bullet(N^*\FF)\otimes T\FF)
$$
the group of (local) sections thereof. For an invariant
interpretation of the above basis of $G_x$ and the transformation
law (\ref{eq:coord-relation}), we need to use the notion of {\it
basic vector fields} (or {\it projectable vector fields})
which is standard in the foliation
theory (see e.g., \cite{MM}) :  Consider the Lie subalgebra
$$
L(Y,\FF) = \{ \xi \in \Gamma(TY) \mid ad_\xi(\Gamma(T\FF)) \subset
\Gamma(T\FF) \}
$$
and its quotient Lie algebra
$$
\ell(Y,\FF) = L(Y,\FF) / \Gamma(T\FF).
$$
An element from $\ell(Y,\FF)$ is called a {\it transverse vector
field} of $\FF$. In general,
there may not be a global basic lifting $Y$ of a given transverse vector
field. But the following lemma shows that  this is always possible
locally.

\begin{lem}\label{PiYx=v} Let $x_0 \in Y$ and
$v \in N_{x_0}\FF$. Then there exists a local basic vector
field $\xi$ in a neighborhood of $x_0$ such that it is tangent to $G$
$$
\pi(\xi(x_0)) = v
$$
where $\pi: TY \to N\FF$ is the canonical projection.
\end{lem}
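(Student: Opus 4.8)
The plan is to reduce everything to the adapted foliation coordinates already in use throughout the paper and to exhibit $\xi$ by an explicit constant-coefficient formula in the frame of $G$. First I would fix foliation coordinates $(y^1,\dots,y^\ell,q^1,\dots,q^{m-\ell})$ centered at $x_0$ and adapted to $\FF$, so that $T\FF = \operatorname{span}\{\del/\del q^\alpha\}$ and the classes $\pi(\del/\del y^i)$ form a frame for $N\FF$ near $x_0$. I would then recall the coordinate description of the splitting: $G$ is locally spanned by the vector fields $e_i := \del/\del y^i + \sum_\alpha R^\alpha_i\,\del/\del q^\alpha$, $1\le i\le \ell$, which satisfy $\pi(e_i) = \pi(\del/\del y^i)$, so that $\{e_i\}$ is a local frame of $G$ projecting to the chosen frame of $N\FF$.

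Writing the given vector as $v = \sum_i v^i\,\pi(\del/\del y^i)$ with $v^i \in \R$, I would simply set
$$
\xi := \sum_{i=1}^\ell v^i\, e_i .
$$
By construction $\xi$ is tangent to $G$, and $\pi(\xi(x_0)) = \sum_i v^i\,\pi(\del/\del y^i) = v$, so the required pointwise normalization holds automatically. It then remains only to verify that $\xi$ is basic, i.e. that $[\xi,\Gamma(T\FF)]\subset \Gamma(T\FF)$, so that $\xi \in L(Y,\FF)$.

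For this I would invoke the standard coordinate criterion for foliate (projectable) fields: a vector field is basic precisely when the coefficients of its transverse part $\sum_i a^i\,\del/\del y^i$ are independent of the leaf coordinates $q^\alpha$. Here the transverse coefficients of $\xi$ are the constants $v^i$, so the criterion is met. Concretely, for each $\beta$ the bracket with a leafwise frame field computes to
$$
[\xi,\del/\del q^\beta] = -\sum_{i,\alpha} v^i\,\frac{\del R^\alpha_i}{\del q^\beta}\,\frac{\del}{\del q^\alpha}\in \Gamma(T\FF),
$$
the $\del/\del y^i$-components dropping out exactly because the $v^i$ are constant; since an arbitrary section of $T\FF$ is a $C^\infty$-combination $\sum_\beta c^\beta\,\del/\del q^\beta$ and $\xi(c^\beta)\,\del/\del q^\beta$ again lies in $T\FF$, the Leibniz rule yields $[\xi,\Gamma(T\FF)] \subset \Gamma(T\FF)$. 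This shows $\xi$ is basic and tangent to $G$ with $\pi(\xi(x_0)) = v$, completing the construction.

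I do not anticipate a genuine obstacle; the content of the lemma is the elementary fact that a constant-coefficient lift in the adapted frame of $G$ is automatically foliate. The only point requiring a little care is the foliate criterion above: it is the $q$-independence of the \emph{transverse} coefficients — and not any condition on the leafwise coefficients $\sum_i v^i R^\alpha_i$ — that guarantees projectability, which is precisely why the constant choice $a^i\equiv v^i$ works regardless of how the functions $R^\alpha_i$ depend on the $q^\alpha$.
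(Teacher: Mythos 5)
Your proof is correct and is exactly the standard coordinate argument this lemma calls for; the paper itself omits the proof (deferring to \cite{oh-park}, Section 3), where the same construction --- take $\xi=\sum_i v^i\bigl(\del/\del y^i+R^\alpha_i\,\del/\del q^\alpha\bigr)$ in adapted coordinates and check projectability via the $q$-independence of the transverse coefficients --- is used. Your closing remark correctly identifies the one point of care: only the transverse coefficients need to be $q$-independent, so the $q$-dependence of the $R^\alpha_i$ is harmless.
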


\begin{defn} Let $\FF$ be a foliation on $Y$. Let $\Pi \in
\AA_E(TY)$ and $\Pi : TY = G_\Pi \oplus T\FF$ be the
$\Pi$-splitting. We call a basic vector field $\xi$ tangent to $G_\Pi$
a {\it $\Pi$-basic vector field} or a {\it $G$-basic vector field}.
\end{defn}

In this point of view, the vector field
$$
Y_i: = \frac{\del}{\del y^i} + \sum_{\alpha
=1}^{n-k} R_i^\alpha \frac{\del}{\del q^\alpha}
$$
is the unique $G$-basic vector field that satisfies
$$
Y_j \equiv \frac{\del}{\del y^i} \mod T\FF,
$$
i.e., defines the same transverse vector field as $\frac{\del}{\del y^i}$.
\begin{defn}\label{PiLie}
Let $X$ be any (local) basic vector field of $\FF$ tangent to
$G_\Pi$. We define the {\it $\Pi$-Lie derivative} of $B$
with respect to $X$ by the formula
\begin{equation}\label{eq:PiLie}
L_X^\Pi B = \sum_{i_1 < \cdots < i_\ell} L_X(
B_{i_1i_2\cdots i_\ell}) dy^{i_1}\wedge \cdots \wedge
dy^{i_\ell}
\end{equation}
where $B_{i_1i_2\cdots i_\ell}$ is a local section of $T\FF$
given by the local representation of $B$
$$
B = \sum_{i_1< \cdots < i_\ell} B_{i_1 \cdots
i_\ell} dy^{i_1}\wedge \cdots \wedge dy^{i_\ell}
$$
in any given foliation coordinates. Here  $B_{i_1 \cdots
i_\ell}$ is the (locally defined) leafwise tangent
vector field given by
$$
B_{i_1 \cdots
i_\ell} = B_{i_1 \cdots i_\ell}^\beta \frac{\del}{\del q^\beta}.
$$
\end{defn}
From now on without mentioning further, we will always
assume that $B$ is locally defined, unless otherwise stated.

\begin{defn}\label{dPi}
For any element $B \in \Gamma(\Lambda^\ell(N^*\FF);TF)$, we define
$$
d^\Pi B \in \Gamma(\Lambda^{\ell+1}(N^*\FF);TF)
$$
by the formula
\begin{equation}\label{eq:dPi}
d^\Pi B = \sum_{j = 1}^{2k} dy^j \wedge L^\Pi_{Y_j} B
\end{equation}
where we call the operator $d^\Pi$ the {\it $\Pi$-differential}.
\end{defn}

For given splitting $\Pi$ and a vector field $\xi$, we denote by
$\xi^\Pi$ the projection of $\xi$ to $G=G_\Pi$, i.e.,
$$
\xi^\Pi = \xi - \Pi(\xi).
$$
Then the definition of $d^\Pi$ can be also given  by the same kind of
formula as that of the usual exterior derivative $d$: For given $B
\in \Omega^k(N^*\FF;T\FF)$ and local sections $\eta_1, \cdots,
\eta_{k+1} \in N_x\FF$, we define
\begin{eqnarray}
d^\Pi B(v_1, & \cdots& , v_k, v_{k+1}) \nonumber \\
& = & \sum_i (-1)^{i-1}X_i(B(\eta_1, \cdots, \widehat{\eta_i},
\cdots, \eta_{k+1}))
\nonumber\\
\, & + & \sum_{i < j}(-1)^{i+j -1}B(\pi([X_i,X_j]), \eta_1, \cdots,
\widehat{\eta_i}, \cdots, \widehat{\eta_j}, \cdots, \eta_{k+1}).
\label{eq:inv-dPi}
\end{eqnarray}
Here $X_i$ is a $\Pi$-basic vector field with $\pi(X_i(x)) =
\eta_i(x)$ for each given point $x \in Y$.

It is straightforward
to check that this definition coincides with (\ref{eq:dPi}).

Next we introduce the analog of the ``bracket''
$$
[ \cdot, \cdot]_\Pi  : \Omega^{\ell_1}(N^*\FF; T\FF)
\otimes \Omega^{\ell_2}(N^*\FF;
T\FF) \to \Omega^{\ell_1 + \ell_2}(N^*\FF;T\FF).
$$
\begin{defn} Let $B \in \Omega^{\ell_1}(N^*\FF; T\FF), \, C \in
\Omega^{\ell_2}(N^*\FF; T\FF)$. We define their bracket
$$
[B,C]_\Pi \in \Omega^{\ell_1 + \ell_2}(N\FF;T\FF)
$$
by the formula
\bea
& & [B,C]_\Pi(v_1,\cdots, v_{\ell_1},
v_{\ell_1+1},\cdots,v_{\ell_1
+ \ell_2 })  \nonumber \\
& = & \sum_{\sigma \in S_n}
\frac{\mbox{sign}(\sigma)}{(\ell_1 +\ell_2)!}
 [B(X_{\sigma(1)}, \cdots, X_{\sigma(\ell_1)}),
C(X_{\sigma(\ell_1 +1)}, \cdots, X_{\sigma(\ell_1 + \ell_2)}]\\
& = & \sum_{\tau \in Shuff(n)} \frac{\mbox{sign}(\tau)}{\ell_1!
\ell_2!} [B(X_{\tau(1)}, \cdots, X_{\tau(\ell_1)}),\nonumber \\
&\quad & \hskip2.0in C(X_{\tau(\ell_1 +1)}, \cdots, X_{\tau(\ell_1
+ \ell_2)}]
\eea
for each $x\in Y$ and $v_i \in N_x\FF$, and $X_i$'s are (local)
$\Pi$-basic vector fields such that $\pi(X_i(x)) =
v_i$ as before. Here $S_n$ is the symmetric group with size $n$
and $Shuff(n) \subset S_n$ is the subgroup of all ``shuffles''. $[\cdot, \cdot]$
is the usual Lie bracket of leafwise vector fields.
\end{defn}

For the case $\ell_1 = \ell_2 = 1$, we derive the coordinate
formula
\begin{equation}\label{eq:[B,C]incoord}
[B, C]_\Pi = \Big(B_i^\alpha \frac{\del C^\beta_j} {\del q^\alpha} -
C^\alpha_j\frac{\del B^\beta_i} {\del q^\alpha}\Big) \frac{\del}{\del q^\beta}
\otimes  dy^i \wedge dy^j .
\end{equation}

With these definitions, we have the following ``Bianchi identity''
in our context.
\begin{prop}
Let $\Pi: TY = G \oplus T\FF$ and $d^\Pi$ be the associated
$\Pi$-differential.  Then we have
\begin{eqnarray*}
d^\Pi F_\Pi & = & 0 \label{eq:Bianchi}\\
(d^\Pi)^2 B & = & [F_\Pi, B]_\Pi.
\label{eq:dPi2} \\
\end{eqnarray*}
\end{prop}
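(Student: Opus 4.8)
The plan is to reduce both identities to the Jacobi identity for the Lie bracket of vector fields, exploiting the fact that the transverse curvature is precisely the obstruction to the horizontal basic lifts closing under bracket. First I would record the one computation that drives everything: in foliation coordinates the $G$-basic lift $Y_i = \frac{\del}{\del y^i} + R_i^\alpha \frac{\del}{\del q^\alpha}$ of $\frac{\del}{\del y^i}$ satisfies
\[
[Y_i, Y_j] = F_{ij}^\beta \frac{\del}{\del q^\beta} = F_\Pi\Big(\frac{\del}{\del y^i}, \frac{\del}{\del y^j}\Big),
\]
a leafwise vector field; this is immediate by expanding the bracket and comparing with \eqref{eq:Fbetaij}. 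In particular $\pi([Y_i, Y_j]) = 0$ while $\Pi([Y_i, Y_j]) = [Y_i, Y_j]$. I would then use the coordinate description $d^\Pi = \sum_j dy^j \wedge L^\Pi_{Y_j}$ of \eqref{eq:dPi}, observing that on a $T\FF$-valued form $L^\Pi_{Y_j}$ acts only on the leafwise coefficients through the Lie derivative $[Y_j, \cdot]$, which again lands in $\Gamma(T\FF)$ because $Y_j$ is basic, so that $L^\Pi_{Y_j}$ commutes with wedging by the $dy^k$.

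For the Bianchi identity I would expand $d^\Pi F_\Pi = \sum_i dy^i \wedge L^\Pi_{Y_i} F_\Pi$ and antisymmetrize the $dy$-factors, so that the coefficient of $dy^i \wedge dy^j \wedge dy^k$ (for $i<j<k$) becomes $[Y_i, F_{jk}] - [Y_j, F_{ik}] + [Y_k, F_{ij}]$. Substituting $F_{jk} = [Y_j, Y_k]$ from the first step, this is the cyclic sum $[Y_i,[Y_j,Y_k]] + [Y_j,[Y_k,Y_i]] + [Y_k,[Y_i,Y_j]]$, which vanishes by the Jacobi identity. Equivalently, in the invariant formula \eqref{eq:inv-dPi} evaluated on the coordinate fields the "bracket" terms drop out because $\pi([Y_i,Y_j]) = 0$, leaving precisely the Jacobiator.

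For the second identity I would compute $(d^\Pi)^2 B = \sum_{j,k} dy^j \wedge dy^k \wedge L^\Pi_{Y_j} L^\Pi_{Y_k} B$ and antisymmetrize, so that the coefficient of $dy^j \wedge dy^k$ (for $j<k$) is $\big(L^\Pi_{Y_j}L^\Pi_{Y_k} - L^\Pi_{Y_k}L^\Pi_{Y_j}\big)B$. On the leafwise coefficients this commutator equals $\mathrm{ad}_{[Y_j,Y_k]} = \mathrm{ad}_{F_\Pi(\del_{y^j},\del_{y^k})}$ by Jacobi, so $(d^\Pi)^2 B$ acquires coefficients $[F_{jk}, B_{i_1\cdots i_\ell}]$. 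It then remains to identify this with $[F_\Pi, B]_\Pi$: its defining shuffle formula pairs $F_\Pi(Y_{\tau(1)}, Y_{\tau(2)}) = [Y_{\tau(1)}, Y_{\tau(2)}] = F_{\tau(1)\tau(2)}$ with $B(\ldots)$ through the same leafwise Lie bracket, and the shuffle sum reproduces exactly the antisymmetrization above; for $\ell = 1$ this is the explicit formula \eqref{eq:[B,C]incoord} with $F_\Pi$ in place of one of the arguments.

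The main obstacle I expect is bookkeeping rather than conceptual: matching the signs and the shuffle/combinatorial normalizations in the graded bracket $[F_\Pi, B]_\Pi$ with those produced by antisymmetrizing $dy^j \wedge dy^k \wedge (\cdots)$ in arbitrary degree $\ell$, and making the "acts on coefficients" interpretation of $\mathrm{ad}_{[Y_j,Y_k]}$ precise, since $[Y_j,Y_k]$ is leafwise rather than $G$-basic and so only its induced leafwise $\mathrm{ad}$-action enters. Both points are handled by working consistently on the coordinate coefficients and invoking the stated equivalence of \eqref{eq:dPi} with \eqref{eq:inv-dPi}; the geometric content is entirely the Jacobi identity together with the identification $[Y_i,Y_j] = F_\Pi(\del_{y^i},\del_{y^j})$.
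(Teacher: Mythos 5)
Your argument is correct, and the paper itself offers no proof of this proposition (it defers to Section 3 of \cite{oh-park}); the computation there is the same one you outline, namely the identification $[Y_i,Y_j]=F^\beta_{ij}\,\partial/\partial q^\beta=F_\Pi(\partial_{y^i},\partial_{y^j})$ followed by the Jacobi identity, using that $L^\Pi_{Y_j}$ acts only on the leafwise coefficients. The one small blemish is your final cross-reference: the displayed coordinate formula for $[B,C]_\Pi$ is stated only for $\ell_1=\ell_2=1$, whereas $[F_\Pi,B]_\Pi$ always has $\ell_1=2$, so the matching must be done through the general shuffle definition (which your antisymmetrization of $dy^j\wedge dy^k\wedge[F_{jk},B_{i_1\cdots i_\ell}]$ in fact accomplishes).
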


Combining the above discussion, the transformation law
(\ref{eq:coord-relation}) in coordinates is translated into the
following invariant form.

\begin{prop}\label{inv-relation}
Let $\Pi, \, \Pi_0$ be two splittings as in Lemma \ref{relation}
and $B_{\Pi_0\Pi} \in \Gamma(N^*\FF\otimes T\FF)$ be the
associated section. Then we have
\begin{equation}\label{eq:inv-relation}
F_\Pi = F_{\Pi_0} + d^{\Pi_0}B_{\Pi_0\Pi} + [B_{\Pi_0\Pi},
B_{\Pi_0\Pi}]_{\Pi_0}.
\end{equation}
\end{prop}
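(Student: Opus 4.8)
The plan is to reduce the invariant identity \eqref{eq:inv-relation} to the coordinate transformation law \eqref{eq:coord-relation} already established in Lemma \ref{relation}. Since $F_\Pi$, $F_{\Pi_0}$, the $\Pi_0$-differential $d^{\Pi_0}$, and the bracket $[\cdot,\cdot]_{\Pi_0}$ are all globally well-defined sections of $\Lambda^\bullet(N^*\FF)\otimes T\FF$ (by Proposition \ref{welldefined} together with Definitions \ref{dPi} and the bracket definition), it suffices to check that the two sides of \eqref{eq:inv-relation} have equal components in an arbitrary foliation chart $(y^1,\dots,y^{2k},q^1,\dots,q^{n-k})$. Writing $B = B_{\Pi_0\Pi}$ with components $B_i^\beta$, I would therefore compute the coordinate expressions of the two new terms $d^{\Pi_0}B$ and $[B,B]_{\Pi_0}$ and match them against the first and second parenthesized groups on the right-hand side of \eqref{eq:coord-relation}.

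The quadratic term is immediate: setting $C = B$ in the coordinate formula \eqref{eq:[B,C]incoord} gives
$$[B,B]_{\Pi_0} = \Big(B_i^\alpha \frac{\del B_j^\beta}{\del q^\alpha} - B_j^\alpha \frac{\del B_i^\beta}{\del q^\alpha}\Big)\frac{\del}{\del q^\beta}\otimes dy^i\wedge dy^j,$$
whose components coincide with the last parenthesized group of \eqref{eq:coord-relation}. For the linear term I would invoke Definition \ref{dPi}, so that $d^{\Pi_0}B = \sum_j dy^j\wedge L^{\Pi_0}_{Y_j}B$ where $Y_j = \frac{\del}{\del y^j}+R_j^\alpha\frac{\del}{\del q^\alpha}$ is the $\Pi_0$-basic vector field (its transverse coefficients $R_j^\alpha$ being exactly those entering \eqref{eq:coord-relation}). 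The crucial point is that, by Definition \ref{PiLie}, $L^{\Pi_0}_{Y_j}$ acts on the leafwise-vector-field coefficient $B_i = B_i^\beta\frac{\del}{\del q^\beta}$ as the genuine Lie bracket $[Y_j,B_i]$; expanding this bracket yields
$$(L^{\Pi_0}_{Y_j}B)_i^\beta = \frac{\del B_i^\beta}{\del y^j} + R_j^\alpha\frac{\del B_i^\beta}{\del q^\alpha} - B_i^\alpha\frac{\del R_j^\beta}{\del q^\alpha}.$$
Antisymmetrizing in $i,j$ via $(d^{\Pi_0}B)^\beta_{ij} = (L^{\Pi_0}_{Y_i}B)_j^\beta - (L^{\Pi_0}_{Y_j}B)_i^\beta$ then reproduces exactly the six-term middle group of \eqref{eq:coord-relation}.

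Combining the two computations with $F_{\Pi_0}$, the right-hand side of \eqref{eq:inv-relation} has components $F^\beta_{0,ij}$ plus the middle and last groups of \eqref{eq:coord-relation}, which by Lemma \ref{relation} equal $F^\beta_{ij}$, the components of $F_\Pi$. Since this holds in every foliation chart and all the objects involved are invariantly defined, \eqref{eq:inv-relation} follows. I expect the main obstacle to be the correct evaluation of the $\Pi_0$-Lie derivative: one must remember that it is the Lie bracket with the basic field $Y_j$ rather than a mere differentiation of coefficients, for it is precisely the action of the bracket on the transverse coefficients $R_j^\beta$ of $Y_j$ that produces the cross terms $B_i^\alpha\,\del R_j^\beta/\del q^\alpha$, without which the linear part could not reproduce \eqref{eq:coord-relation}.
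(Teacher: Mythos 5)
Your proposal is correct and follows essentially the same route as the paper, which obtains Proposition \ref{inv-relation} precisely by translating the coordinate transformation law \eqref{eq:coord-relation} of Lemma \ref{relation} into invariant form (the detailed verification being deferred to \cite{oh-park}). Your coordinate computations of $[B,B]_{\Pi_0}$ via \eqref{eq:[B,C]incoord} and of $d^{\Pi_0}B$ via the $\Pi_0$-Lie derivative $[Y_j,B_i]$ reproduce exactly the quadratic and linear groups of \eqref{eq:coord-relation}, so the argument is complete.
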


\subsection{Lie algebroid and its $\bar b$-deformed $E$-cohomology}
\label{sliealgebroid}

We start with recalling the definition of Lie algebroid and its
associated $E$-de Rham complex and $E$-cohomology. The leafwise de
Rham complex $\Omega^\bullet(\FF)$ is a special case of the {\it
$E$-de Rham complex} associated to the general Lie algebroid $E$.

We quote the following definitions from \cite{nest-tsygan}.

\begin{defn}\label{liealgebroid} Let $M$ be a smooth manifold. A
{\it Lie algebroid} on $M$ is a triple $(E,\rho, [\, ,\, ])$,
where $E$ is a vector bundle on $M$, $[\, ,\, ]$ is a Lie algebra
structure on the sheaf of sections of $E$, and $\rho$ is a bundle
map, called the {\it anchor map},
$$
\rho: E \to TM
$$
such that the induced map
$$
\Gamma(\rho): \Gamma(M;E) \to \Gamma(TM)
$$
is a Lie algebra homomorphism and, for any sections $\sigma$ and
$\tau$ of $E$ and a smooth function $f$ on $M$, the identity
$$
[\sigma, f\tau] = \rho(\sigma)[f]\cdot \tau + f\cdot [\sigma,
\tau].
$$
\end{defn}

\begin{defn}\label{E-deRham} Let
$(E,\rho,[\, ,\, ])$ be a Lie algebroid on $M$. The {\it $E$-de Rham
complex} $(^E\Omega^\bullet(M), ^Ed)$ is defined by
\begin{eqnarray*}
^E\Omega(\Lambda^\bullet(E^*)) & = &\Gamma(\Lambda^\bullet(E^*))\\
^Ed\omega(\sigma_1, \cdots, \sigma_{k+1})
& = &\sum_i(-1)^i \rho(\sigma_i)\omega(\sigma_1, \cdots,
\widehat{\sigma_i}, \cdots, \sigma_{k+1}) \\
&\qquad &+
\sum_{i<j}(-1)^{i+j-1}\omega([\sigma_i,\sigma_j],\sigma_1,
\cdots,\widehat \sigma_i, \cdots, \widehat \sigma_j, \cdots,
\sigma_{k+1}).
\end{eqnarray*}
The cohomology of this complex will be denoted by $^EH^*(M)$ and
called the $E$-de Rham cohomology of $M$.

Now assume that $\bar b \in ^E\Omega^1(E^*)$ is a  cocycle: $ ^Ed\bar  b = 0$.
Then $^Ed^{\bar b} : = ^Ed +\bar b \wedge$ satisfies: $(^Ed^{\bar b})^2 =0$.
The cohomology of $(^E\Omega^\bullet(M), ^Ed^{\bar b})$ will be denoted by $^EH^*_{\bar b}(M)$ and
called {\it the $\bar b$-deformed $E$-de Rham cohomology}.
\end{defn}

In \cite{oh-park}  the authors have noticed that for a coisotropic submanifold $Y$ in a symplectic manifold $X$ the triple
$$
(E=TY^\omega, \rho = i, [\, ,\, ])
$$
defines the structure of Lie algebroid and  the
$E$-differential 
is   the exterior
derivative $d_\FF$ along the null foliation $\FF$.  Now assume that $(Y, \omega, b)$ is  a coisotropic submanifold in $(X, \omega_X, \alpha)$. 
Then the restriction $\bar b$   of $b$ to $\FF$ is a closed 1-form in the complex $(\Omega (\Lambda ^\bullet E), d_\FF)$ and $^Ed^{\bar b}$
coincides with $d_\FF ^{\bar b}$, which we also denote by $d_\FF ^{ b}$.

The $\bar b$-deformed $E$-de Rahm  differential  is related to  the infinitesimal deformation space of coisotropic
submanifolds in a l.c.s. manifold.  For this, we introduce the space
$$
{\mathcal Coiso}_k = {\mathcal Coiso}_k(X,\omega_X)
$$
the set of coisotropic submanifolds with nullity $n-k$ for $0\leq
k \leq n$ and characterize its infinitesimal deformation space at
$Y \subset E^*$, the zero section of $E^*$.
By the coisotropic neighborhood theorem, the infinitesimal
deformation space, denoted as  $T_{Y}{\mathcal
Coiso}_k(X,\omega_X) = T_{Y}{\mathcal Coiso}_k(U,\omega_U)$ with
some abuse of notion, depends only on $(Y, \omega)$ where $\omega
= i^*\omega_X$, but not on $(X,\omega_X)$. An element in
$T_{Y}{\mathcal Coiso}_k(U,\omega_U)$ is a section of the bundle
$E^*= T^*\FF \to Y$.

\end{document}